\newcommand{\R}{\mathbb{R}}
\newcommand{\C}{\mathbb{C}}
\newcommand{\N}{\mathbb{N}}
\newcommand{\Z}{\mathbb{Z}}
\newcommand{\T}{\mathbb{T}}
\newcommand{\PP}{\mathbb{P}}
\newcommand{\Vc}{\mathcal{V}}
\newcommand{\Ec}{\mathcal{E}}
\newcommand{\Pc}{\mathcal{P}}
\newcommand{\Ic}{\mathcal{I}}
\newcommand{\Cheb}{\mathrm{Cheb}}
\newcommand{\Oc}{\mathcal{O}}
\newcommand{\ee}{\varepsilon}
\newcommand{\lo}{\longrightarrow}
\newcommand{\li}{\left}
\newcommand{\re}{\right}
\newcommand{\mi}{:}
\newcommand{\id}{\mathrm{id}}
\newcommand{\spann}{\mathrm{span}}
\newcommand{\nol}{||\,}
\newcommand{\nor}{\,||}
\newtheorem{experiment}{Experiment}
\begin{document}

\title{Multivariate  Newton Interpolation}

% \subtitle{Do you have a subtitle?\\ If so, write it here}

%\titlerunning{Short form of title}        % if too long for running head

\author{Michael~Hecht \and  Karl~B.~Hoffmann \and  Bevan~L.~Cheeseman \and Ivo~F.~Sbalzarini
}

%\authorrunning{Short form of author list} % if too long for running head

% \institute{MOSAIC Group, Chair of Scientific Computing for Systems Biology, Faculty of Computer Science, TU Dresden
% \& Center for Systems Biology Dresden, Max Planck Institute of Molecular Cell Biology and Genetics, Dresden \at
%               Pfotenhauerstr. 108
% 01307 Dresden
% Germany\\
% %               Tel.: +123-45-678910\\
% %               Fax: +123-45-678910\\
% \email{hecht@mpi-cbg.de, karlhoff@mpi-cbg.de, cheesema@mpi-cbg.de, ivos@mpi-cbg.de}, \url{http://mosaic.mpi-cbg.de}.
% }
% 
\date{}
% % The correct dates will be entered by the editor

\institute{MOSAIC Group, Chair of Scientific Computing for Systems Biology, Faculty of Computer Science, TU Dresden
\& Center for Systems Biology Dresden, Max Planck Institute of Molecular Cell Biology and Genetics, Dresden \at
              Pfotenhauerstr. 108
01307 Dresden
Germany\\
%               Tel.: +123-45-678910\\
%               Fax: +123-45-678910\\
              \email{hecht@mpi-cbg.de, karlhoff@mpi-cbg.de, cheesema@mpi-cbg.de, ivos@mpi-cbg.de}\\
               http://mosaic.mpi-cbg.de
%             \emph{Present address:} of F. Author  %  if needed
%            \and
%            S. Author \at
%               second address
}

%\date{Received: date / Accepted: date}
% The correct dates will be entered by the editor

\maketitle

\begin{abstract}
For $m,n \in \N$, $m\geq 1$ and a given function 
$f : \R^m\lo \R$, the \emph{polynomial interpolation problem} (PIP) is to determine a \emph{unisolvent node set} $P_{m,n} \subseteq \R^m$ of $N(m,n):=|P_{m,n}|=\binom{m+n}{n}$ points and the uniquely defined 
polynomial $Q_{m,n,f}\in \Pi_{m,n}$ in $m$ 
variables of degree $\deg(Q_{m,n,f})\leq n \in \N$ that fits $f$ on $P_{m,n}$, i.e., $Q_{m,n,f}(p) = f(p)$, $\forall\, p \in P_{m,n}$.
For $m=1$ the solution to the PIP is well known. In higher dimensions, however, no closed framework was available. We here present a generalization of the classic Newton interpolation from one-dimensional 
to arbitrary-dimensional spaces. Further we formulate an algorithm, termed PIP-SOLVER, based on a \emph{multivariate divided difference scheme}
that computes the solution $Q_{m,n,f}$ in $\Oc\big(N(m,n)^2\big)$ time using $\Oc\big(N(m,n)\big)$ memory.
%For arbitrary $m,n \in \N$, $m \geq 1$, we here further develop our characterization of unisolvent node sets  and the corresponding (weak) PIP-SOLVER presented in \cite{PIP1}.
Further, we introduce unisolvent \emph{Newton-Chebyshev nodes} and show that these nodes avoid \emph{Runge's phenomenon} in the sense that arbitrary periodic
\emph{Sobolev functions} $f \in H^k(\Omega,\R) \subsetneq C^0(\Omega,\R)$, $\Omega =[-1,1]^m$ of regularity $k >m/2$ can be uniformly approximated, i.e., 
$ \lim_{n\rightarrow \infty}\nol f -Q_{m,n,f} \nor_{C^0(\Omega)}= 0$.
Numerical experiments demonstrate the computational performance and approximation accuracy of the PIP-SOLVER in practice.
We expect the presented results to be relevant for many applications, including numerical solvers, quadrature, non-linear optimization, polynomial regression, adaptive sampling, Bayesian inference, and spectral analysis.
\end{abstract}

\keywords{Newton interpolation, Vandermonde matrix, matrix inversion, unisolvent nodes, multivariate interpolation, numerical stability, polynomial approximation, Runge's phenomenon.}
\subclass{26C99, 41A10, 65D05, 	65F05, 65F99, 65L20}
\section{Introduction}

In scientific computing, the problem of interpolating a function $f : \R^m \lo \R$, $m \in \N$, is ubiquitous. 
Because of their simple differentiation and integration, as well as their pleasant vector space structure, polynomials $Q \in  \R[x_1,\dots,x_m]$ in $m$ variables of degree $\deg(Q) \leq n$, 
$m,n \in \N$, are a standard choice as interpolants and are fundamental in ordinary differential equation (ODE) and partial differential equation (PDE) solvers. 
For an overview, we refer to \cite{NA2} and \cite{Stoer}.  
Thus, the \emph{polynomial interpolation problem} (PIP) is one of the most fundamental problems in numerical analysis and scientific computing, formulated as:
\begin{problem}[PIP]\label{PolyInt}
Let $m,n \in \N$ and $f: \R^m\lo \R$ be a computable function. 
\begin{enumerate}
 \item[i)] Choose $N(m,n)$ nodes 
$P_{m,n}=\{p_1,\dots,p_{N(m,n)}\} \subseteq \R^m$ such that $P_{m,n}$ is unisolvent, i.e., for every $f$ there is exactly one polynomial 
$Q_{m,n,f} \in \Pi_{m,n}$ fitting $f$ on $P_{m,n}$ as $Q_{m,n,f}(p)=f(p)$ for all $p\in P_{m,n}$.
\item[ii)] Determine $Q_{m,n,f}$ once a unisolvent node set $P_{m,n}$ has been chosen.
\end{enumerate}
\end{problem}
Here, $\Pi_{m,n}$ is the vector space of polynomials $Q \in \R[x_1,\dots,x_m]$ in $m$ 
variables of degree $\deg(Q)\leq n$. Every $Q \in \Pi_{m,n}$ has $N(m,n):=\binom{m+n}{n}$ monomials/coefficients.

The function $f : \R^m \lo \R$ is assumed to be \emph{computable} in  the sense that for any $x \in \R^m$ the value of $f(x)$ can be evaluated in $\Oc(1)$ time, where $\Oc(\cdot)$ is the Bachmann-Landau symbol. Note that if $f \in \Pi_{m,n}$, then $Q_{m,n,f}=f$.
If $f : \R \lo \R$ is an arbitrary continuous function in one variable, then the \emph{Weierstrass approximation theorem} \cite{weier} guarantees that $f$ can be approximated by \emph{Bernstein polynomials}, i.e., 
there exist polynomials $Q_{n} \in \Pi_{1,n}$
such that 
$$\nol f- Q_n \nor_{C^0(\Omega)}:= \sup_{x \in \Omega}|f(x) -Q_n(x)| \xrightarrow[n\rightarrow \infty]{} 0$$ for every fixed bounded domain $\Omega\subseteq \R$.
However, these polynomials $Q_n$ are not necessarily interpolating, i.e., they need not satisfy $Q_{n}(p) = f(p)$ for all $p \in P_{1,n}$.
This additional requirement restricts the space of polynomials available to approximate $f$, which is the cause of Runge's phenomenon \cite{AT,faber,runge}: If the unisolvent nodes $P_{1,n}$ are chosen independently of $f$,
the sequence of interpolants $Q_{n,f}\in \Pi_{1,n}$, $n\in \N$, 
can diverge away from $f$, i.e., there exist at least one $f \in C^0(\Omega)$ for which
$\nol f- Q_{n,f} \nor_{C^0(\Omega)} \centernot\lo 0 $ for $n\lo \infty$.
Therefore, the approximation ability of an interpolation method depends on the choice of $P_{m,n}$ and has to be characterized. More precisely:

\begin{question}\label{Task} Let $m,n \in \N$ and $\Omega \subseteq \R^m$ be a bounded domain. Further let $S_{m,n}: C^0(\Omega,\R)\lo \Pi_{m,n}$ be an interpolation solver, i.e., 
$S_{m,n}(f)=Q_{m,n,f}$ solves the PIP for any given function 
$f : \Omega \subseteq \R^m \lo \R$, choosing
the unisolvent nodes $P_{m,n} \subseteq \Omega$ independently of $f$.
\begin{enumerate}
 \item[i)] What is the set $\mathcal{A}(S_{m,n}) \subseteq C^0(\Omega,\R)$ of continuous functions that can be approximated by $S_{m,n}$, i.e., for which 
 $\nol f -S_{m,n}(f)\nor_{C^0(\Omega)} \xrightarrow[n\rightarrow \infty]{}0 , \forall f\in\mathcal{A}$~? 
\item[ii)] How large is the absolute approximation error 
$$ \ee(f,S_{m,n}):=\nol f- S_{m,n}(f) \nor_{C^0(\Omega)} \, ? $$ 
\item[iii)] How large is the relative approximation error $\mu_{m,n} \geq 1$, such that
$$ \nol f- S_{m,n}(f) \nor_{C^0(\Omega)} \leq  \mu_{m,n}\nol f- Q^*_{m,n} \nor_{C^0(\Omega)} \quad \forall \, f \in C^0(\Omega,\R)\, $$
where $Q^*_{m,n} \in \Pi_{m,n}$ is an optimal approximation that minimizes the $C^0$-distance to $f$ ?

\end{enumerate}
\end{question}

The one-dimensional PIP (\mbox{$m=1$}) can be solved efficiently in $\Oc(N(1,n)^2)=\Oc(n^2)$ and numerically accurately by various algorithms based on \emph{Newton or Lagrange Interpolation} \cite{Stoer,berrut,gautschi,LIP}. Though, even in one dimension,
there is no efficient general method for finding an optimal node set $P_{1,m}$ that minimizes Runge's phenomenon; sub-optimal node sets can be generated efficiently. 
For $\Omega=[-1,1]$,  a classic choice of sub-optimal nodes are the roots of the \emph{Chebyshev polynomials}
\begin{equation}\label{cheby}
 \Cheb_{n}=\li\{p_k \in \R \mi p_k = \cos\li(\frac{2k-1}{2(n+1)}\pi\re)\,, k=1,\dots,n+1\re\}\,, 
\end{equation}
which are optimal up to a factor depending on the \mbox{$(n+1)$}-th derivative of $f$. Therefore, the approximation ability of \emph{Chebyshev nodes} is characterized by: 
\setcounter{theorem}{-1}
\begin{theorem}\label{Cheb} Let $S_n: C^0(\Omega,\R)\lo \Pi_n$, $\Omega=[-1,1]$, be an interpolation solver that uses $\Cheb_{n}$ as unisolvent nodes, i.e., $S_n(f)=Q_{n,f}$, $Q_{n,f}(\Cheb_{n})=f(\Cheb_{n})$.
  \begin{enumerate}
 \item[i)] The set of approximable functions satisfies $C^1(\Omega,\R) \subsetneq \mathcal{A}(\Omega,\R) \subsetneq C^0(\Omega,\R)$.
 \item[ii)] If  $f \in C^{n+1}(\Omega,\R)$  and $x \in \Omega$ then  the absolute approximation error at $x$ can be bounded by 
 $$| f- S_{n}(f)(x) | \leq \frac{1}{(n+1)!}f^{(n+1)}(\xi_x) \prod_{i=1}^{n+1} (x-p_i) \leq \frac{f^{(n+1)}(\xi)}{2^n(n+1)!}\,, \xi_x \in \Omega\,, $$
 \item[iii)] The relative approximation error can be bounded by the \emph{Lebesgue function} $\Lambda(\Cheb_n)$ as: 
 $$ \nol f- S_{m,n}(f) \nor_{C^0(\Omega)} \leq  (1+\Lambda(\Cheb_n) )\nol f- Q^*_{n} \nor_{C^0(\Omega)} \,,$$
 where $\Lambda(\Cheb_n)  = \frac{2}{\pi}\big(\log(n) + \gamma +  \log(8/\pi)\big) + \Oc(1/n^2) $
 and $\gamma \sim 0.5772$ is Euler's constant \cite{brutman}.
\end{enumerate}
\end{theorem}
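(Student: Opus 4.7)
The plan is to handle parts (ii), (iii), and (i) in that order of increasing difficulty, since each piece is a classical component whose sketch mainly recalls the standard machinery.

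For part (ii), I would use the classical Cauchy--style remainder argument. Fix $x \in \Omega$ distinct from all interpolation nodes and set $\omega(t) = \prod_{i=1}^{n+1}(t-p_i)$. The auxiliary function $g(t) = f(t) - S_n(f)(t) - \frac{f(x)-S_n(f)(x)}{\omega(x)}\omega(t)$ vanishes at the $n+2$ points $p_1,\dots,p_{n+1},x$. Iterating Rolle's theorem $n+1$ times produces some $\xi_x \in \Omega$ with $g^{(n+1)}(\xi_x)=0$; since $S_n(f)^{(n+1)} \equiv 0$ and $\omega^{(n+1)} \equiv (n+1)!$, this yields the pointwise representation. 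The uniform bound $\sup_{x\in\Omega}|\omega(x)| \leq 2^{-n}$ then follows from the extremal property of monic Chebyshev polynomials on $[-1,1]$: among monic polynomials of degree $n+1$, the scaled polynomial $2^{-n}T_{n+1}$ uniquely minimizes the sup-norm on $\Omega$, attaining exactly $2^{-n}$, and its roots are precisely $\mathrm{Cheb}_n$.

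For part (iii), I exploit that $S_n$ is a linear projection onto $\Pi_n$, so $S_n(Q)=Q$ for every $Q \in \Pi_n$. Writing $f-S_n(f) = (f-Q^*_n) - S_n(f-Q^*_n)$ and taking sup-norms gives $\nol f - S_n(f)\nor_{C^0(\Omega)} \leq (1+\nol S_n \nor)\nol f-Q^*_n \nor_{C^0(\Omega)}$. It remains to identify the operator norm $\nol S_n \nor$ with the Lebesgue function $\Lambda(\mathrm{Cheb}_n) = \sup_{x\in\Omega}\sum_{i=1}^{n+1}|L_i(x)|$ via the Lagrange representation $S_n(f)(x)=\sum_{i=1}^{n+1} f(p_i)L_i(x)$; the upper bound follows from the triangle inequality, the matching lower bound by choosing $f$ to equal $\pm 1$ at the nodes so as to saturate the supremum. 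The explicit asymptotic for $\Lambda(\mathrm{Cheb}_n)$ is a more delicate analytic estimate that I would simply quote from \cite{brutman}.

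For part (i), the inclusion $C^1(\Omega,\R) \subseteq \mathcal{A}(\Omega,\R)$ follows by combining (iii) with Jackson's theorem: for $f\in C^1$ one has $\nol f-Q^*_n \nor_{C^0(\Omega)} = \Oc(1/n)$, while $1+\Lambda(\mathrm{Cheb}_n) = \Oc(\log n)$, so the product tends to zero. Strictness of $C^1 \subsetneq \mathcal{A}$ is witnessed by a H\"older function such as $|x|^\alpha$ with $1/2<\alpha<1$, whose best-approximation error still decays fast enough to absorb the $\log n$ Lebesgue growth although the function fails to be $C^1$. The remaining strict inclusion $\mathcal{A}\subsetneq C^0$ is Faber's theorem, which I view as the main obstacle: since $\nol S_n \nor = \Lambda(\mathrm{Cheb}_n) \lo \infty$, a Banach--Steinhaus argument on the evaluation functionals $f \mapsto S_n(f)(x_0)$ produces a continuous $f$ for which the interpolants diverge at some $x_0 \in \Omega$. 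Rather than reproduce the intricate explicit construction of such an $f$, I would cite \cite{faber} and \cite{AT} for the full argument.
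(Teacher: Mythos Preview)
Your proposal is correct and in fact supplies more than the paper does: Theorem~\ref{Cheb} is stated there purely as a review of classical one-dimensional facts, and the paper gives no proof at all, merely writing ``We refer to \cite{AT,gautschi,brutman,burden,Stewart} for further details and proofs.'' The arguments you outline---the Rolle/auxiliary-function remainder for (ii), the projection-plus-Lagrange identification of the operator norm for (iii), and the Jackson/Faber combination for (i)---are precisely the standard ones found in those references, so there is no discrepancy in approach to report.
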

This provides a pleasing solution to the PIP in one dimension. We refer to \cite{AT,gautschi,brutman,burden,Stewart} for further details and proofs.

However, many data sets in scientific computing are functions of more than one variable and therefore require multivariate polynomial interpolation. 
A solution to the multivariate PIP, complete with a computationally efficient and numerically stable algorithm for computing it, has so far not been available.
This is at least partly due to the fact that an efficiently computable characterization of unisolvent nodes in arbitrary dimensions \mbox{$m \in \N$} was not known. 
In one dimension, unisolvent node sets are characterized by the simple requirement that nodes have to be pairwise different, which can obviously be asserted in $\Oc(n^2)$ time. 
While some unisolvent node sets have been proposed in dimensions \mbox{$m=2,3$} \cite{Bos,Erb,Gasca2000,Chung}, generalizations to arbitrary dimensions had a complexity that prohibited their practical 
implementation \cite{FAST,Gasca2000}. Available PIP solvers in higher dimensions therefore use randomly generated node sets and then determine the interpolation polynomial by numerically 
inverting the resulting \emph{multivariate Vandermonde matrix} $V_{m,n} \in \R^{N(m,n)\times N(m,n)}$ in order to compute the coefficients $C_{m,n}$ of $Q_{m,n,f}$ in normal form. 
Using random node sets is possible due to the famous theorem of Sard, which was later generalized by Smale \cite{smale}. This theorem states that the superset $\Pc_{m,n}$ of all unisolvent node sets 
for $m,n \in \N$ is a set of second category in the sense of Baire. Therefore, any randomly generated node set is unisolvent with probability 1.
% in both finite and infinite dimensions. 

Using random nodes, however, can never guarantee numerical stability of the solver, nor can it control Runge's phenomenon. In addition, numerical inversion of the multivariate Vandermonde matrix $V_{m,n}$ in practice incurs a computational cost larger than that of Newton interpolation. In principle, inverting the Vandermonde matrix should be as complex as solving the PIP, due to the special structure of $V_{m,n}$. However, this structure depends on the choice of unisolvent nodes $P_{m,n}$. Therefore, using random node sets prevents exploiting this structure, so that solving the system of  linear equations 
$$ V_{m,n}(P_{m,n})C_{m,n}=F\,, \quad  F= (f(p_1),\dots,f(p_{N(m,n)}))^T \in \R^{N(m,n)}$$ 
still requires the same computational time as \emph{general matrix inversion}. A lower complexity bound for inverting general matrices of size $N \times N\,, N\in \N$, is given by $\Oc(N^2\log(N))$ \cite{cormen,raz,tveit}. 
The fastest known algorithm for general matrix inversion is the \emph{Coppersmith-Winograd algorithm} \cite{COPPER}, 
which requires runtime in $\Oc(N^{2.3728639})$ in its most efficient version \cite{FAST}.  However, the \emph{Coppersmith-Winograd algorithm} is rarely used in practice, because it is only advantageous for matrices so large that memory problems become prevalent on modern hardware \cite{robinson}.
The algorithm that is mostly used in practice is the \emph{Strassen algorithm} \cite{strassen}, which runs in $\Oc(N^{2.807355})$. 
Alternatively,  one can perform \emph{Gaussian elimination} in $\Oc(N^3)$. All of these approaches require $\Oc(N^2)$ memory to store the matrix. 
Moreover, the numerical robustness and accuracy of these approaches is limited by the condition number of the Vandermonde matrix, which 
again depends on the choice of $P_{m,n}$ and can therefore  not be controlled when using random node sets. 
Hence, previous approaches to polynomial interpolation become inaccurate or intractable with increasing $N(m,n)$.

\subsection{Statement of Contribution}
Though the relevance of multivariate interpolation is undisputed and feasible interpolation schemes in dimension 1 are known since the 18$^{th}$ century,
there was so far no general interpolation scheme for multivariate functions that can guarantee to solve the PIP numerically 
robustly and accurately, controls Runge's phenomenon, and is as computationally efficient as Newton interpolation. 

We here close this gap  by introducing the notion of \emph{multivariate Newton polynomials} and a characterization of \emph{unisolvent nodes} $P_{m,n}$ such that all of the following is true:
\begin{enumerate} 
 \item[R1)] The unisolvent nodes $P_{m,n}$ generate a well-conditioned Vandermonde matrix such that the interpolant can be computed numerically robustly and accurately.
 \item[R2)] The unisolvent nodes $P_{m,n}$ generate a \emph{lower triangular} Vandermonde matrix with respect to the multivariate Newton polynomials, 
            such that the interpolant can be computed in quadratic time using a \emph{multivariate divided difference scheme}.
 \item[R3)] The unisolvent nodes $P_{m,n}$ control Runge's phenomenon in the sense that Theorem \ref{Cheb} generalizes to dimension $m$ and thereby answers Question \ref{Task}.
 \end{enumerate}
 
We practically implement the solution as an algorithm, called PIP-SOLVER, which, allows to interpolate arbitrary multivariate functions $ f : \R^m \lo \R$ numerically accurately. 
The PIP-SOLVER algorithm is based an a recursive decomposition approach, which yields a \emph{recursive generator} of \emph{unisolvent node sets} 
and the associated \emph{multivariate divided difference scheme} to determine the interpolant $Q_{m,n,f}$. We show that the PIP-SOLVER requires runtime in $\Oc(N(m,n)^2)$ and memory in $\Oc(N(m,n))$,
which matches the performance of Newton interpolation for \mbox{$m=1$}.
Further, the multivariate Newton--form of $Q_{m,n,f}$ allows  evaluating and differentiating $Q_{m,n,f}$  in linear time. 

Moreover, we show that all Sobolev functions $f \in H^k(\Omega,\R) \subseteq C^0(\Omega,\R)$, $\Omega =[-1,1]^m$ for $k >m/2$ that are periodic on $\Omega$
can be approximated uniformly 
$$\nol f- Q_{m,n,f}\nor_{C^0(\Omega)} \xrightarrow[n\rightarrow \infty]{}0 \quad \text{for $\Omega=[-1,1]^m$}$$
by using unisolvent nodes of \emph{generalized Newton-Chebyshev type}. 
Analogous to the one-dimensional case, we provide bounds for the relative and absolute approximation errors. 
Note that this is probably the best result one can obtain, since uniform approximation of non-continuous functions by polynomials and extrapolating functions $f : \Omega' \supseteq \Omega \lo \R$  with $P_{m,n}\subseteq \Omega$ 
are ill-posed problems.
Because $H^k(\Omega,\R)$ contains non-continuous functions for $k \leq m/2$, the Sobolev functions $ f \in H^k(\Omega,\R)$ for $k>m/2$ are the largest \emph{Hilbert space} of continuous functions
that can be approximated by polynomials in the $C^0$--sense. Further, the Sobolev space, $H^k(\Omega,\R)$, $k >m/2$, densely contains all \emph{smooth} and thus all \emph{analytical functions}. 
The periodic boundary condition can always be achieved by rescaling. For example, consider $\widetilde f $ periodic on $\Omega_2=[-2,2]^m$ with $\widetilde f_{|\Omega }=f$ 
and rescale $\Omega_2$ to $\Omega$. 
For these and other reasons, $H^k(\Omega,\R)$, $k >m/2$, is the pivotal analytical choice in scientific computing \cite{Jost}.

\subsection{Paper Outline}  After stating the main results of this article in section \ref{main}, we recapitulate classic one-dimensional Newton interpolation in section \ref{Newt} 
and previous multivariate interpolation schemes in section \ref{FormVander}. In section \ref{Decomp} we provide the mathematical proofs for our results. In section \ref{RPIP}, we present the
\emph{multivariate divided difference scheme} for generalized \emph{multidimensional Newton nodes} as an efficient solver algorithm. In section \ref{APT}, we show that multivariate Newton-Chebyshev nodes 
allow proving the uniform approximation result for periodic Sobolev functions $f \in H^k(\Omega,\R)$, $k>m/2$, and the bounds on the approximation errors. 
The results are then demonstrated in numerical experiments in section \ref{EX}.
Finally, we sketch a few of the possible applications in section \ref{APP} and conclude in section \ref{Conc}.

\section{Main Results}\label{main}
We summarize the main results of this article in the following three Theorems. They are based on the realization that the PIP can be decomposed into sub-problems of lesser dimension or lesser degree. 
Recursion then decomposes the problem along a binary tree whose leafs are associated with constants or zero-dimensional sub-problems, 
which only require evaluating $f$  to be solved. 
The decomposition is based on the notion of multivariate Newton polynomials, which we introduce in section \ref{RPIP}, generalizing Newton nodes to arbitrary dimensions and defining a multivariate Newton basis.

 \begin{theorem}[Main Result I] \label{I} Let $m,n \in \N$ and $f : \R^m \lo \R$ be a given function. Then, there exists an algorithm with runtime complexity $\Oc\big(N(\mbox{m,n})^2\big)$ requiring
 $\Oc(N(\mbox{m,n}))$ memory that computes: 
 \begin{enumerate}
  \item[i)] A unisolvent node set $P_{m,n} \subseteq \R^m$ and the coefficients of the corresponding interpolation polynomial $Q_{m,n,f} \in \Pi_{m,n}$ in normal form.
  \item[ii)] A unisolvent set $P_{m,n} \subseteq \R^m$ of multidimensional Newton nodes and the coefficients of the corresponding interpolation polynomial $Q_{m,n,f} \in \Pi_{m,n}$ in multivariate Newton form.
 \end{enumerate}
  \label{M1}
 \end{theorem}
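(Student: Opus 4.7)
The plan is to prove both parts by constructing the algorithm explicitly via a recursive decomposition that mirrors Pascal's identity $N(m,n)=N(m-1,n)+N(m,n-1)$, with base cases $N(0,n)=N(m,0)=1$ corresponding to trivial zero-dimensional problems that only require a single evaluation of $f$. The idea is to generalize the one-dimensional Newton basis $N_k(x)=\prod_{i=0}^{k-1}(x-p_i)$ to $m$ variables by picking, in the $m$-th coordinate direction, a set of $n+1$ distinct ``layer values'' $v_0,\dots,v_n\in\R$ and laying down, for each layer $v_j$, a unisolvent set $P_{m-1,n-j}$ of degree $n-j$ in dimension $m-1$ (obtained recursively) inside the affine hyperplane $\{x_m=v_j\}$. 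This yields $P_{m,n}=\bigcup_{j=0}^{n}P_{m-1,n-j}\times\{v_j\}$ of cardinality $\sum_{j=0}^n N(m-1,n-j)=N(m,n)$ by Pascal, and the attendant multivariate Newton basis consists of the tensorized products of the lower-dimensional Newton polynomials with the factors $\prod_{i=0}^{j-1}(x_m-v_i)$.

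With this construction in hand, the first key step is to verify \emph{unisolvence}. Assuming inductively that each $P_{m-1,n-j}$ is unisolvent for $\Pi_{m-1,n-j}$, I would show that if $Q\in\Pi_{m,n}$ vanishes on $P_{m,n}$, then writing $Q(x_1,\dots,x_m)=\sum_{j=0}^n Q_j(x_1,\dots,x_{m-1})\,x_m^j$ and interpolating in $x_m$ along the layers forces each $Q_j$ to vanish on the appropriate lower-dimensional slice, hence $Q\equiv 0$. Simultaneously, by ordering the nodes and the Newton basis consistently layer by layer, the resulting Vandermonde matrix $V_{m,n}$ with respect to the multivariate Newton basis is block lower triangular with unit diagonal (each diagonal block being a lower-dimensional Newton Vandermonde matrix, again lower triangular by induction), confirming requirement R2.

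The second key step is to implement the \emph{multivariate divided difference scheme}. Inductively, within each layer $\{x_m=v_j\}$ one solves the lower-dimensional PIP for $f(\cdot,v_j)$ to obtain the Newton coefficients of that slice; then one combines these slice interpolants across $j=0,\dots,n$ via the classical one-dimensional divided difference recursion applied coordinate-wise in $x_m$. The crucial observation is that these combinations can be performed \emph{in place}, overwriting slice coefficients with the next divided difference in Horner-like fashion, so that the working memory never exceeds the total number of coefficients, namely $\mathcal{O}(N(m,n))$. For the runtime, each of the $N(m,n)$ Newton coefficients requires $\mathcal{O}(N(m,n))$ arithmetic work to compute (summing contributions down the binary recursion tree and applying a linear-cost divided difference update), giving $\mathcal{O}(N(m,n)^2)$ in total. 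This proves part (ii).

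For part (i), I would then convert the Newton form to the monomial normal form. Because the basis change matrix from multivariate Newton polynomials to monomials is itself triangular with simple entries expressible through the elementary symmetric functions of the $v_j$'s and of the lower-dimensional node coordinates, the conversion reduces to $\mathcal{O}(N(m,n))$ Horner-type expansions, each costing $\mathcal{O}(N(m,n))$, again well within the claimed bounds. I expect the main obstacle to be the bookkeeping required to certify that the divided difference updates can truly be carried out in linear memory and that the induction on the pair $(m,n)$ composes cleanly; concretely, one must show that the recursive calls on $P_{m-1,n-j}$ and on $P_{m,n-1}$ can share storage without interference, which is where the explicit traversal order of the binary decomposition tree introduced in Section~\ref{RPIP} will be essential.
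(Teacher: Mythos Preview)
Your proposal is correct and follows essentially the same route as the paper: the layered Newton-node construction, the lower-triangular Vandermonde matrix, and the multivariate divided-difference recursion all match Definition~\ref{NTN}, Proposition~\ref{UniN}, and Theorem~\ref{DDST}. The only minor organizational difference is that for part~(i) the paper obtains the normal form directly via the recursion $Q=Q_1+Q_HQ_2$ of Theorem~\ref{mainT}, whereas you first compute the Newton form and then change basis; both routes fit the stated $\Oc(N(m,n)^2)$ time and $\Oc(N(m,n))$ memory bounds.
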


We generalize classic results for Newton polynomials to show that the present multivariate Newton form is a good choice for multivariate polynomial interpolation. 
In particular, it enables efficient and accurate numerical computations according to the second main result:
  \begin{theorem}[Main Result II] \label{II} Let $m,n \in \N$ and a set of multidimensional Newton nodes $P_{m,n}$ be given. Let further $Q \in \Pi_{m,n}$ 
  be a polynomial given in multivariate Newton form. Then, there exist algorithms  that compute: 
  \begin{enumerate}
   \item[i)] The value of $Q(x_0)$  for any $x_0 \in \R^m$  in 
  $\Oc(N(m,n))$;   
   \item[ii)] The partial derivative  $\partial_{x_i} Q|_{x_0}$ for any $i \in \{1,\dots,m\}$ and $x_0 \in \R^m$ in 
  $\Oc(nN(m,n))$; 
     \item[iii)]  The integral $\int_{\Omega}Q(x)\mathrm{d}x$ for any hypercube $\Omega \subseteq \R^m$ with runtime complexity in 
  $\Oc(nN(m,n))$.
   \label{M2}
  \end{enumerate}
  \end{theorem}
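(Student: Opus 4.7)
The plan is to exploit the tensor-product-like structure of the multivariate Newton basis that is anticipated by the recursive decomposition outlined in Section \ref{main}. I expect each basis polynomial to factor across dimensions as a product of one-dimensional Newton factors of the form $\prod_{j=1}^{\alpha_i}(x_i - p_{i,j})$, so that a polynomial $Q \in \Pi_{m,n}$ in multivariate Newton form admits a nested representation
$$ Q(x_1,\dots,x_m) = \sum_{k=0}^{n} Q_k(x_1,\dots,x_{m-1})\,\prod_{j=1}^{k}(x_m - p_{m,j}), $$
where $Q_k \in \Pi_{m-1,n-k}$ is itself given in multivariate Newton form. This factorization is exactly the tree decomposition described in the introduction, and all three algorithms will be obtained by walking this tree once.

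For part (i) I would generalize Horner's scheme dimension-by-dimension: evaluate the outer sum in $\Oc(n)$ operations once the $Q_k(x_1^0,\dots,x_{m-1}^0)$ have been computed recursively. Writing $T(m,n)$ for the evaluation cost yields the recursion
$$ T(m,n) = \sum_{k=0}^{n} T(m-1,n-k) + \Oc(n), \qquad T(1,n) = \Oc(n). $$
The hockey-stick identity $\sum_{k=0}^{n}\binom{m-1+k}{k} = \binom{m+n}{n} = N(m,n)$ collapses the recursion to $T(m,n) = \Oc(N(m,n))$, which is the claimed bound. This counting step is the whole content of (i); the arithmetic per tree node is constant, and shared subexpressions such as the partial products $\prod_{j=1}^{k}(x_m^0 - p_{m,j})$ are built incrementally so no factor is recomputed.

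For part (ii) I would differentiate the nested representation by the Leibniz rule. If $i<m$ the derivative acts only inside the lower-dimensional polynomials $Q_k$, so by induction we obtain $\partial_{x_i}Q|_{x^0}$ with one derivative-evaluation per subtree, costing $nT(m-1,n)$ in the worst case. If $i=m$ one uses the identity
$$ \partial_{x_m}\prod_{j=1}^{k}(x_m - p_{m,j}) \;=\; \sum_{\ell=1}^{k}\prod_{j\neq \ell}(x_m - p_{m,j}), $$
whose evaluation at $x_m^0$ can be carried out in $\Oc(n)$ for all $k$ simultaneously by a forward/backward sweep of partial products. Combining the two cases gives the extra factor $n$ in the worst case, so the total cost is $\Oc(n\,N(m,n))$. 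For part (iii) the same nested representation makes integration over a hypercube $\Omega = \prod_{i=1}^{m}[a_i,b_i]$ separable: the antiderivative of $\prod_{j=1}^{k}(x_m-p_{m,j})$ is a polynomial of degree $k+1$ in $x_m$, and its values at $a_m$ and $b_m$ can again be obtained incrementally in $\Oc(n)$ total, feeding the recursion on the $m-1$ remaining variables. This gives the recursion $T_{\text{int}}(m,n) = \sum_{k=0}^{n} T_{\text{int}}(m-1,n-k) + \Oc(n)$ with the additional multiplicative $\Oc(n)$ coming from the increased degree produced by each antidifferentiation step, yielding $\Oc(n\,N(m,n))$ as claimed.

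The main obstacle I expect is bookkeeping rather than novelty: one must verify that the incremental computation of the partial products $\prod_{j=1}^{k}(x_m - p_{m,j})$ truly amortizes to $\Oc(n)$ per recursion level (for the claims (ii) and (iii) this relies on reusing both left-to-right and right-to-left prefix products), and that the recursive calls on the $Q_k$ precisely match the multivariate Newton form produced by the construction in Section \ref{RPIP}, so that induction on $m$ applies verbatim. Once those invariants are fixed, the hockey-stick identity delivers the three complexity bounds in a uniform way.
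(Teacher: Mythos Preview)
Your proposal is correct and rests on the same recursive structure as the paper's proof: both exploit that the multivariate Newton form nests as $Q = Q_1 + (x_m - p_{m,1})Q_3$ with $Q_1\in\Pi_{m-1,n}$, $Q_3\in\Pi_{m,n-1}$, and then invoke $N(m-1,n)+N(m,n-1)=N(m,n)$. You simply unroll the degree direction fully at each step, writing $Q=\sum_{k}Q_k\prod_j(x_m-p_{m,j})$, whereas the paper keeps the binary split; the two recursions are equivalent and your hockey--stick bookkeeping reproduces the same $\Oc(N(m,n))$ count (indeed $T(m,n)=2(N(m,n)-1)$ exactly under your recursion with base $T(1,n)=2n$).

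The only genuine divergence is in part~(iii): the paper reduces $\int_\Omega (x_m-p_{m,1})Q_3\,dx$ via integration by parts and then recurses, while you use the separability of the hypercube to turn the $x_m$--integral of each one--dimensional Newton factor into a scalar weight multiplying the $(m-1)$--dimensional integral of $Q_k$. Your route is cleaner and avoids the somewhat awkward boundary term in the paper's formula; the paper's route has the advantage of staying strictly within the binary tree $T_{m,n}$ so that the induction for (i)--(iii) is uniform. Both yield the stated $\Oc(nN(m,n))$ bound. One small remark: your claim that all the definite integrals $\int_{a_m}^{b_m}\prod_{j=1}^k(x_m-p_{m,j})\,dx_m$ are obtainable ``incrementally in $\Oc(n)$ total'' is optimistic---there is no obvious $\Oc(1)$ update from $k$ to $k+1$ for these integrals---but since they depend only on the fixed generators $\PP_m$ and the hypercube, a one--time $\Oc(mn^2)$ precomputation suffices and is absorbed by the target bound.
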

  
Studying the approximation properties of multivariate Newton interpolation, we show that periodic Sobolev functions $f \in H^k(\Omega,\R)$ for $k >m/2$ can be approximated uniformly, 
and we provide upper bounds for the corresponding absolute and relative approximation errors. To do so let $A_{m,n}\subseteq \N^m$ be the set of all multi--indices $\alpha=(\alpha_1,\dots,\alpha_m) \in \N^m$ with $|\alpha|=\sum_{i=1}^m\alpha_i \leq n$ .
 \begin{theorem}[Main Result III] \label{III} Let $m,n \in \N$, $\Omega= [-1,1]^m \subseteq \R^m$. 
 Let $S_{m,n}:H^k(\Omega,\R) \lo \Pi_{m,n}$, $k>m/2$, denote an interpolation operator with respect to interpolation nodes $P_{m,n}$.
    \begin{enumerate}
  \item[i)] If $P_{m,n}$ are of Newton type, then the Lebesgue function
\begin{align*}
  \Lambda(P_{m,n}, H^k(\Omega)) &= \sup_{f\in H^k(\Omega,\R), ||f||_{H^k(\Omega)} =1} ||S_{m,n}(f)||_{C^0(\Omega)}\,, 
\end{align*}
given by the operator norm of $S_{m,n}$ is bounded by 
  \begin{align*}
   \Lambda(P_{m,n}, H^k(\Omega)) &\leq  \prod_{i=1}^m\Lambda(P_i,H^{k-(m-1)/2}(\Omega))\,.
  \end{align*}
  \item[ii)] If $P_{m,n}$ are of Newton-Chebyshev type, then  $\Lambda(P_{m,n}, H^k(\Omega))  \in \Oc(\log(n)^m)$ 
  and every $f \in H^k(\Omega,\R)$, $k >m/2$, being periodic on $\Omega =[-1,1]^m$ can be approximated uniformly:
  $$\nol f -S_{m,n}(f)\nor_{C^0(\Omega)} \xrightarrow[n\rightarrow \infty]{}0 \,.$$
  \item[iii)] If $f \in C^{n+1}(\Omega,\R)$ and $P_{m,n}$ are of Newton--type then for every $\alpha \in A_{m,n}\setminus A_{m,n-1}$, $i \in\{1,\dots,m\}$ and every $x \in \Omega$ there is 
  $\xi_x\in \Omega$ such that 
\begin{equation*}
   | f(x)- S_{m,n}(f)(x) |\leq \frac{1}{\alpha_i!} \partial^{\alpha_i+1}_{x_i}f(\xi) |N_\alpha(x)|\,, 
\end{equation*}
where  $N_\alpha(x) = \prod_{i=1}^m\prod_{j=1}^{\alpha_i}(x_i-p_{i,j})$, $x =(x_1,\dots,x_m)$, $p_{i,j}\in \PP_i$.
If  $P_{m,n}$ are of Newton-Chebyshev--type then we can further estimate 
\begin{equation*}
| f(x)- S_{m,n}(f)(x) | \leq \frac{1}{2^{\alpha_i}\alpha_i! }\partial^{\alpha_i+1}_{x_i}f(\xi_x)\,.
\end{equation*}
  \item[iv)] 
 For any unisolvent node set $P_{m,n}$  and every $f \in H^k(\Omega,\R)$ the relative error is bounded by 
  $$\nol f- S_{m,n}(f) \nor_{C^0(\Omega)} \leq  \big(1 + \Lambda(P_{m,n}, H^k(\Omega))\big)\nol f- Q^*_{m,n} \nor_{C^0(\Omega)}\,, $$
  where $Q^*_{m,n}$ is an optimal approximation that minimizes the $C^0$-distance to $f$.
 \end{enumerate}
  \end{theorem}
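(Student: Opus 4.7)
The plan is to exploit the recursive product structure of Newton-type node sets together with classical Sobolev trace/embedding theorems, in order to reduce each of the four claims to its one-dimensional counterpart, i.e., to Theorem \ref{Cheb}.

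For part (i), I would first observe that a Newton-type node set $P_{m,n}$ is assembled recursively from one-dimensional factors $P_1,\dots,P_m$ along the coordinate directions, and that the associated interpolation operator $S_{m,n}$ factorizes as a composition of univariate interpolation operators acting slicewise in each coordinate. Writing $S_{m,n}=S^{(1)}\circ\cdots\circ S^{(m)}$ and taking operator norms, the bound $\Lambda(P_{m,n},H^k(\Omega)) \leq \prod_{i=1}^m\Lambda(P_i,H^{k-(m-1)/2}(\Omega))$ reduces to tracking how much Sobolev regularity is lost when restricting to a lower-dimensional slice. Each restriction from $H^s(\Omega')$ to a codimension-one hyperplane costs $1/2$ in Sobolev regularity by the classical trace theorem, and $m-1$ such restrictions are needed to reach dimension one; this accounts exactly for the shift $(m-1)/2$. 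The main obstacle is justifying this factorization of $S_{m,n}$ rigorously when the Newton lattice is not a naive Cartesian product, which I expect to follow from the recursive construction of $P_{m,n}$ announced in Section \ref{main}.

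For part (ii), I would plug the univariate Chebyshev bound $\Lambda(\Cheb_n) \in \Oc(\log n)$ from Theorem \ref{Cheb} into the product formula of (i) to obtain $\Lambda(P_{m,n},H^k(\Omega)) \in \Oc(\log(n)^m)$. For uniform convergence on periodic $f \in H^k(\Omega,\R)$ with $k > m/2$, I would combine this with part (iv). The embedding $H^k(\Omega) \hookrightarrow C^0(\Omega)$ holds for $k > m/2$, and for periodic $f$ the Fourier coefficients $\hat f(\xi)$ satisfy $\sum(1+|\xi|^2)^k|\hat f(\xi)|^2<\infty$, so a Jackson-type estimate gives $\|f-Q^*_{m,n}\|_{C^0(\Omega)} \in \Oc(n^{-\beta})$ for some $\beta>0$ depending on $k-m/2$. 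Multiplying by the $\Oc(\log(n)^m)$ Lebesgue factor still yields a null sequence, proving the claimed uniform convergence.

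For part (iii), the idea is to chain univariate Newton remainder formulas along the recursive decomposition. Fixing a multi-index $\alpha\in A_{m,n}\setminus A_{m,n-1}$ identifies the last Newton basis polynomial $N_\alpha$ added in the recursion, and the classical univariate Newton error formula applied in the $i$-th coordinate gives, for each frozen choice of the remaining coordinates, a remainder of the form $\frac{1}{\alpha_i!}\partial_{x_i}^{\alpha_i+1}f(\xi)\prod_{j=1}^{\alpha_i}(x_i-p_{i,j})$. The Chebyshev refinement then uses the extremal property $\|\prod_j(x_i-p_{i,j})\|_{C^0([-1,1])} = 2^{-\alpha_i}$ of the Chebyshev nodes. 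A small continuity argument is needed to collapse the several intermediate mean-value points into a single $\xi_x\in\Omega$; this is standard for $f\in C^{n+1}(\Omega,\R)$.

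For part (iv), I would use the projection identity $S_{m,n}(Q^*_{m,n}) = Q^*_{m,n}$ to write $f-S_{m,n}(f) = (f-Q^*_{m,n}) - S_{m,n}(f-Q^*_{m,n})$, apply the triangle inequality in $C^0(\Omega)$, and bound the second term by the operator norm of $S_{m,n}$, combined with the Sobolev embedding $H^k\hookrightarrow C^0$ to convert the resulting $H^k$ norm of $f-Q^*_{m,n}$ back into a $C^0$ norm (absorbing the embedding constant into $\Lambda$). I expect the main obstacle across the entire theorem to lie in part (i): the correct factorization of $S_{m,n}$ and the precise bookkeeping of the Sobolev regularity loss are the conceptual heart of the argument, whereas parts (ii)--(iv) follow from (i) together with Theorem \ref{Cheb} and standard Sobolev embeddings.
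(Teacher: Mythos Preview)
Your approach to parts (i), (iii), and (iv) aligns with the paper's. For (i), the paper argues by induction on $m$, using the recursive Newton decomposition (what the paper writes as equation~\eqref{NI}) to peel off one coordinate direction at a time; this is precisely your factorization $S_{m,n}=S^{(1)}\circ\cdots\circ S^{(m)}$, and the trace theorem is indeed the mechanism behind the $(m-1)/2$ shift (the paper records it as equation~\eqref{trace} but invokes it only implicitly in the induction, together with Lemma~\ref{PartLEB}). For (iii), the paper's argument is actually simpler than the chaining you anticipate: one fixes the single line $L_{\alpha,i}$ through the relevant Newton nodes in direction $x_i$, builds the auxiliary function $g(x)=f(x)-Q_{m,n,f}(x)-G(\bar x)N_\alpha(x)$, and applies Rolle's theorem $\alpha_i$ times along that one line; only a single $\xi_x$ appears, so no collapsing of intermediate mean-value points is needed. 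Part (iv) is the standard Lebesgue projection argument in both treatments.

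The genuine difference is in the convergence half of (ii). You propose to combine (iv) with a multivariate Jackson estimate $\|f-Q^*_{m,n}\|_{C^0}\in\Oc(n^{-\beta})$ for periodic $H^k$ functions, so that the product with the $\Oc(\log(n)^m)$ Lebesgue factor still tends to zero. The paper does not go through (iv) or quote an off-the-shelf Jackson theorem: instead it introduces a Fourier truncation $\theta_l$ at a carefully chosen level $l=l(n)$ balanced against $n$, splits $f-S_{m,n}(f)$ into pieces involving $\theta_l(f)$, $\theta_l^\perp(f)$, and further $\tau_n^\perp\theta_l(f)$, and controls each piece by the explicit tail estimates of Lemma~\ref{shrink} on Fourier and Taylor remainders. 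Your route is shorter and more conceptual provided a suitable Jackson bound is available; the paper's route is more self-contained, effectively deriving the needed Jackson-type decay from scratch.
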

$\Lambda$ is estimated in $(i)$, but statement $(iv)$ holds for all unisolvent node sets. For Chebyshev nodes, $(iv)$ provides a tight estimate.

\section{Newton Interpolation in Dimension 1}\label{Newt}
Since our $m$-dimensional generalization is a natural extension of \emph{Newton interpolation} in 1D, we first review some classic results in the special case of dimension \mbox{$m=1$}, 
with more detailed discussions available elsewhere \cite{atkinson,endre,LIP,powel,NA2,Stewart,Stoer,walston}. 
In one dimension, the Vandermonde matrix $V_{n}(P_n)$ takes the classical form 
$$ V_{n}(P_{n}) = \li(\begin{array}{cccc}
                       1   & p_1    & \cdots      & p_1^{ n}  \\
%                        1  & p_2 &p_2^{2} & \cdots & p_2^{ n}\\
%                        1 & p_3 & p_3^{2}& \cdots & p_3^{n}\\
                       \vdots & \vdots & \ddots& \vdots \\
                       1 & p_{n+1}   & \cdots & p_{n+1}^{n}\\
                      \end{array}\re)
                      \,.$$
It is well known that for this matrix to be regular, the nodes $p_1,\dots,p_{n+1}$ have to be pairwise distinct. As we see later, this is also a sufficient condition for the nodes $P_{n}$ to be unisolvent.
In light of this fact, we observe that $V_{n}$ induces a vector space isomorphism  
$\varphi : \R^{n+1} \lo \Pi_{n}$, where $\varphi(v)$ is the polynomial with normal-form coefficients $C_n \in \R^{n+1}$ such that $V_{n}C_n=v$, $v \in \R^{n+1}$. 
The polynomials
\begin{equation}\label{NewtPoly}
 N_i(x) = \prod_{j=1}^{i} (x-p_j) \,, \quad i =0,\dots,n
\end{equation}
are the \emph{Newton Basis} (NB) of $\Pi_{n}$.
When represented with respect to the NB, the Vandermonde matrix becomes a lower triangular matrix of the form
$$ V_{\mathrm{NB},n}(P_{n}) = \li(\begin{array}{cccc}
                       1   & 0    & \cdots      &0 \\
                      1  & (p_2-p_1) & \cdots &0\\
                        1 & (p_3 -p_1) & (p_3-p_1)(p_3-p_2)& \vdots \\
                        \vdots & \vdots & \ddots& \vdots \\
                       1 & (p_{n+1} -p_1)  & \cdots & \prod_{j=1}^n(p_{n+1}-p_j)\\
                      \end{array}\re)
                      \,.$$
Thus, the solution of the PIP in dimension $m=1$ with respect to the NB can directly be obtained as:
\begin{equation}
 Q_{n,f}(x) = \sum_{i=0}^{n}c_iN_i(x)= \sum_{i=0}^{n}c_i \prod_{j=1}^{i} (x-p_j) \,.
\end{equation}
More efficiently, the \emph{Aitken-Neville} or \emph{divided difference} scheme determines the coefficients $c_i$ by setting 
$$[p_1]f:=f(p_1)\,,\quad \quad [p_i,\dots,p_j]f:= \frac{[p_i,\dots,p_{j-1}]f-[p_{i+1},\dots,p_j]f}{x_j-x_i}\,, \,\, j\geq i $$
and proving that $c_{i-1}=[p_1,\dots,p_{i}]f$. Indeed one can verify by induction that $Q_{n,f}$ satisfies the fitting condition $f(P_n)=Q_{n,f}(P_n)$, which uniquely determines $Q_{n,f}$ up to its representation.
The induced recursion can be illustrated as follows: 
\begin{equation}\label{DDS}
 \begin{array}{crcrccrcrc}
[p_1]f \\
       & \searrow \\{}
[p_2]f & \rightarrow  & [p_1,p_2]f  \\
       & \searrow     &                & \searrow     \\{}
[p_3]f & \rightarrow  & [p_2,p_3]f     & \rightarrow & [p_1,p_2,p_3]f \\{}
 \vdots & \vdots      & \vdots         & \vdots    & \vdots  &\ddots \\{}
  & \searrow     &                & \searrow    & &              & \searrow \\{}
[p_{n+1}]f & \rightarrow  & [p_{n},p_{n+1}]f & \rightarrow & [p_{n-1},p_{n},p_{n+1}]f
  & \cdots & \rightarrow & [p_1\ldots p_{n+1}]f .  \\
\end{array}
\end{equation}

We summarize some facts about the classic 1D scheme \cite{gautschi,Stoer}, as they are important for the generalization to arbitrary dimensions: 
\begin{proposition}\label{bN} Let $n \in \N$ and $f \in C^0(\Omega,\R)\,, \Omega=[-1,1]$ 
 \begin{enumerate}
 \item[i)] The divided difference scheme allows to numerically stable determine the $\mbox{n+1}$ coefficients $c_0,\dots,c_n \in \R$ of the interpolant $Q_{n,f}$ with respect to the Newton--basis in $\Oc(n^2)$.
  \item[ii)]  Given the interpolant $Q_{n,f}$ in its Newton--form, the  \emph{Horner-scheme} allows to compute the value of $Q_{n,f}(x)$ in $\Oc(n)$ for any $x \in \Omega$. 
  \item[iii)] Given the interpolant $Q_{n,f}$ in its Newton--form, the  value of the derivative $\frac{d}{dx}Q_{n,f}(x)$  can be computed in $\Oc(n)$ for any $x \in \Omega$. 
%   \item[iv)] {\color{red} Given the interpolant $Q_{n,f}$ in its Newton--form, the coefficients of its normal form can be computed in $\Oc(n)$,}
 \end{enumerate}

\end{proposition}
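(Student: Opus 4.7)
}
The plan is to establish the three statements separately by direct operation counts on the recursions already displayed in the text, appealing to standard 1D error analysis for the numerical stability claim.

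\emph{For (i).} I would simply count the arithmetic cost of filling the triangular divided--difference scheme \eqref{DDS}. The scheme has $\binom{n+2}{2}=\Oc(n^2)$ entries; each entry $[p_i,\dots,p_j]f$ with $j>i$ is obtained from its two predecessors by one subtraction in the numerator, one subtraction in the denominator, and one division, hence in $\Oc(1)$ operations. The coefficients $c_0,\dots,c_n$ are read off as the top edge $c_{i-1}=[p_1,\dots,p_i]f$, so the total work is $\Oc(n^2)$. For the numerical stability statement, I would invoke the classical backward error analysis of the divided difference recursion: each elementary step is a forward stable two--term operation, and the accumulated forward error is controlled by the condition of the associated (lower triangular) Vandermonde matrix $V_{\mathrm{NB},n}(P_n)$, whose condition number is finite as soon as the nodes are pairwise distinct. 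Concrete bounds are standard and I would cite \cite{gautschi,Stoer,Stewart} rather than reproduce them.

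\emph{For (ii).} I would rewrite the Newton form in nested (Horner) factorization,
\begin{equation*}
Q_{n,f}(x)=c_0+(x-p_1)\bigl(c_1+(x-p_2)\bigl(c_2+\cdots+(x-p_n)c_n\bigr)\bigr),
\end{equation*}
and evaluate it from the innermost parenthesis outward. This produces the recursion $b_n=c_n$, $b_{i}=c_i+(x-p_{i+1})b_{i+1}$ for $i=n-1,\dots,0$, with $Q_{n,f}(x)=b_0$. Each step costs one multiplication and one addition, giving $\Oc(n)$ total work.

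\emph{For (iii).} Here I would exploit the recurrence $N_i(x)=(x-p_i)N_{i-1}(x)$ with $N_0\equiv 1$, which after differentiation yields
\begin{equation*}
N_i'(x)=N_{i-1}(x)+(x-p_i)N_{i-1}'(x),\qquad N_0'\equiv 0.
\end{equation*}
Running the pair $(N_i(x),N_i'(x))$ jointly and accumulating $Q_{n,f}(x)=\sum_{i=0}^n c_i N_i(x)$ and $Q_{n,f}'(x)=\sum_{i=0}^n c_i N_i'(x)$ requires $\Oc(1)$ arithmetic per index $i$, hence $\Oc(n)$ in total. Equivalently, one can differentiate the Horner recursion in (ii) formally with respect to $x$, which produces a nested scheme of the same length.

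The only non--routine point is the stability assertion in (i); everything else is a direct operation count on recurrences that are already written down. I would therefore spend most of the exposition on carefully stating what ``numerically stable'' means here (forward stability of the two--term recursion together with a bound on the growth factor in terms of $\min_{i\neq j}|p_i-p_j|$) and on pointing to the classical references for the detailed error analysis, rather than re--deriving the latter.
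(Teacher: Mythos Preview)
Your proof is correct and is essentially the standard argument one finds in the references the paper cites. In fact, the paper does not give its own proof of this proposition at all: it is stated as a summary of classical facts with a pointer to \cite{gautschi,Stoer}, so there is nothing to compare against beyond noting that your operation counts and the differentiated Horner recursion are exactly the arguments those references contain.
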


Due to its relationship with \emph{Taylor expansion}, \emph{Newton interpolation} has several advantages over other interpolation schemes.
For instance, it easily extends to higher degrees without recomputing the coefficients of lower-order terms, i.e., by incrementally computing higher-order terms. 
Furthermore, evaluation of the Newton interpolant and its derivatives is straightforward due to their simple form. 
However, the recursive nature of the divided difference scheme also has certain disadvantages compared to \emph{Lagrange interpolation}, which computes the result at once. 
We refer to \cite{berrut} and \cite{werner} for further discussions of the properties of these schemes.

In any case, due to the uniqueness of the interpolation polynomial, the question how well the interpolant approximates a given function is independent of the specific interpolation scheme, up to numerical rounding errors. 
Instead, the approximation quality only depends on the choice of interpolation nodes. The approximation quality in the 1D case is given in Theorem \eqref{Cheb}.

\section{Multivariate Polynomials} \label{FormVander}

We follow the notation of P.J. Olver \cite{MultiVander} to extend our considerations to the general multivariate case.
For $m,n\in \N$, we denote by $\Pi_{m,n}\subseteq \R[x_1,\dots,x_m]$ the vector space of all real polynomials in $m$ variables of bounded degree $n$. 
While the normal form of a polynomial \mbox{$Q \in \Pi_{m,n}$} possesses $N(m,n):=\binom{m+n}{n}$ 
monomials, the number of monomials of degree $k$ is given by  $M(m,k):=\binom{m+k}{m} - \binom{m+k-1}{m}$.
We  enumerate the coefficients $c_0,\dots,c_{N(m,n)}$ of $Q \in \Pi_{m,n}$ in its normal form as follows:
\begin{align}
 Q(x) &=  c_0 + c_1 x_1  + \cdots + c_mx_{m}+ c_{m+1}x_1^2+c_{m+2}x_1x_2  \nonumber \\ 
 & + \cdots  + c_{2m}x_1x_m + c_{2m+1}x_2^2+  \cdots  +  c_{M(m,n-1)+1}x_1^n   \nonumber \\ 
 & +c_{M(m,n-1)+2}x_1^{n-1}x_2+ \cdots + c_{N(m,n)-2}x_{m-1}x_m^{n-1}\nonumber \\
 & + c_{N(m,n)-1}x_m^n\,. \label{C}
\end{align}
We assume that \mbox{$0 \in \N$} and 
consider $A_{m,n}:=\li\{\alpha \in \N^m : |\alpha| \leq n\re\}$ 
the set of all multi-indices of order  $|\alpha|:= \sum_{k=1}^m\alpha_k \leq n$.
 The multi-index is used to address the monomials of a multivariate polynomial. 
For a vector $x=(x_1,\dots,x_m)$ and $\alpha \in A_{m,n}$ we define 
\begin{equation*}
x^\alpha:=  x_1^{\alpha_1}\cdots x_m^{\alpha_m}
\end{equation*}
and we order the $M(m,k)$ multi-indices of order $k$ with respect to lexicographical order, i.e., 
$\alpha_1 =(k,0,\dots,0)$, $\alpha_2=(k-1,1,0\dots,0)$,$\dots$, $\alpha_{M(m,k)} =(0,\dots,0,k)$. 
The $k$-th \emph{symmetric power} 
$x^{\odot k }=(x_1^{\odot k },\dots,x_{M(m,k)}^{\odot k }) \in \R^{M(m,k)}$ is defined  by the entries 
\begin{equation}
x^{\odot k }_i:= x^{\alpha_i} \,, \quad i=1,\dots,M(m,k)\,.
\end{equation}
Thus, $x^{\odot 0 }_i =1$, $x^{\odot 1 }_i =x_i$.  
\begin{definition}[multivariate Vandermonde matrix] For given \mbox{$n,m \in \N$} and a set of nodes $P=\{p_1,\dots,p_{N(m,n)}\} \subseteq \R^m$ 
with $p_i = (p_{1,i}, \dots,p_{m,i})$, 
we define the \emph{multivariate Vandermonde matrix} $V_{m,n}(P)$ by 
$$ V_{m,n}(P) = \li(\begin{array}{ccccc}
                       1   & p_1   & p_1^{\odot 2}  & \cdots      & p_1^{\odot n}  \\
                       1  & p_2 &p_2^{\odot 2} & \cdots & p_2^{\odot n}\\
                       1 & p_3 & p_3^{\odot 2}& \cdots & p_3^{\odot n}\\
                       1 & \vdots &\vdots & \ddots& \vdots \\
                       1 & p_{N(m,n)}  & p_{N(m,n)}^{\odot 2} & \cdots & p_{N(m,n)}^{\odot n}\\
                      \end{array}\re)
                      \,.$$ 
                      \end{definition}

We call a set of nodes $P =\{p_1,\dots,p_{N(m,n)}\} \subseteq \R^m$ \emph{unisolvent} if and only if the  Vandermonde matrix $V_{m,n}(P)$
is regular. Thus, the set of all unisolvent node sets is given by 
$\Pc_{m,n}=\li\{ P \subseteq \R^m : \det\big(V_{m,n}(P)\big)\not = 0\re\}$, which is an open set in $\R^m$, since $P \mapsto \det\big(V_{m,n}(P)\big)$ is a continuous function. 

Given a real-valued function $f :R^m \lo \R$, and assuming that unisolvent nodes $P=\{p_1,\dots p_{N(m,n)}\} \subseteq \R^m$ exist, the linear system of equations $V_{m,n}(P)x = F$, with
$$  x=\big(x_1,\dots,x_{N(m,n)}\big)^\mathsf{T}\,\,\,\text{and}\,\,\, F=\big(f(p_1),\dots,f(p_{N(m,n)})\big)^\mathsf{T} , $$ 
has the unique solution
$$ x = V_{m,n}(P)^{-1}F \,.$$ 
Thus, by using $c_i:=x_i$ as the coefficients of $Q\in\Pi_{m,n}$, enumerated as in Eq.~\eqref{C}, we have uniquely determined the solution of Problem \ref{PolyInt}. 
The essential difficulty herein lies in finding a good unisolvent node set posing a well-conditioned problem and in solving $V_{m,n}(P)x =F$ accurately and efficiently.  
Therefore, we state a well-known result, first mentioned in \cite{Chung} and then again in \cite{MultiVander}, saying:

\begin{proposition}\label{generic} Let $m,n \in \N$  and $P_{m,n} \subseteq \R^m$, $\# P_{m,n} = N(m,n)$. The Vandermonde matrix $V_{m,n}(P_{m,n})$
is regular if and only if 
the  nodes $P_{m,n}$  
do not belong to a common algebraic hypersurface of
degree $\leq n$, i.e., if there exists no polynomial $Q \in \Pi_{m,n}$, such that $ Q(p) = 0 $ for all $p \in P_{m,n}$.  
\end{proposition}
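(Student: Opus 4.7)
My plan is to prove the proposition by directly identifying the kernel of $V_{m,n}(P_{m,n})$ with the space of polynomials in $\Pi_{m,n}$ that vanish on $P_{m,n}$, and then noting that regularity of a square matrix is equivalent to trivial kernel.

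First I would set up the identification between coefficient vectors and polynomials. Given any vector $c = (c_1,\dots,c_{N(m,n)})^\mathsf{T} \in \R^{N(m,n)}$, enumerate its entries by the multi-indices $\alpha \in A_{m,n}$ in the lexicographical order fixed in the paper, and associate with it the polynomial $Q_c(x) = \sum_{\alpha \in A_{m,n}} c_\alpha x^\alpha \in \Pi_{m,n}$. This yields a vector space isomorphism $\R^{N(m,n)} \cong \Pi_{m,n}$ because the monomials $\{x^\alpha : \alpha \in A_{m,n}\}$ form a basis of $\Pi_{m,n}$ and there are exactly $N(m,n)$ of them.

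Next I would read off the key formula from the definition of $V_{m,n}(P_{m,n})$: since the $i$-th row of $V_{m,n}(P_{m,n})$ is $(1,p_i,p_i^{\odot 2},\dots,p_i^{\odot n})$, whose $\alpha$-entry is $p_i^\alpha$, one has
\begin{equation*}
\bigl(V_{m,n}(P_{m,n})\,c\bigr)_i \;=\; \sum_{\alpha \in A_{m,n}} c_\alpha\, p_i^\alpha \;=\; Q_c(p_i), \qquad i=1,\dots,N(m,n).
\end{equation*}
Hence $V_{m,n}(P_{m,n})c = 0$ holds if and only if $Q_c(p) = 0$ for every $p \in P_{m,n}$. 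Under the isomorphism above, the kernel of $V_{m,n}(P_{m,n})$ is therefore in bijection with the subspace $\{Q \in \Pi_{m,n} : Q(p)=0\ \forall\, p \in P_{m,n}\}$.

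Finally, because $V_{m,n}(P_{m,n})$ is a square matrix, regularity is equivalent to having trivial kernel. Combining this with the identification just established shows that $V_{m,n}(P_{m,n})$ is regular exactly when the only $Q \in \Pi_{m,n}$ vanishing on $P_{m,n}$ is $Q \equiv 0$, i.e., exactly when $P_{m,n}$ is not contained in a common algebraic hypersurface of degree $\leq n$. There is no real obstacle here beyond carefully matching the enumeration of coefficients in \eqref{C} with the multi-index ordering so that row $i$ of the Vandermonde matrix indeed evaluates an arbitrary polynomial at $p_i$; once this bookkeeping is done, the statement reduces to a standard dimension-count argument for square linear systems.
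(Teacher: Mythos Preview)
Your proof is correct and follows essentially the same approach as the paper: both identify the kernel of the square Vandermonde matrix with the space of polynomials in $\Pi_{m,n}$ vanishing on $P_{m,n}$ via $(V_{m,n}(P_{m,n})c)_i = Q_c(p_i)$, and then use that regularity is equivalent to trivial kernel. Your version simply spells out the coefficient-to-polynomial bookkeeping that the paper leaves implicit.
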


\begin{proof} Indeed $P_{m,n}$ is unisolvent if and only if
the homogeneous Vandermonde problem 
$$V_{m,n}(P)x = 0$$
has no non-trivial solution, which is equivalent to the fact that there is no polynomial $Q \in \Pi_{m,n}\setminus\{0\}$ generating a hypersurface $W =Q^{-1}(0)$ of degree $\deg(W)\leq n$ with $P_{m,n} \subseteq W$.  
\qed\end{proof}

The geometric interpretation of Proposition \ref{generic} is crucial but not constructive, i.e., it classifies node sets but yields no algorithm or scheme to construct unisolvent node sets. 
In the next section, we therefore further develop our understanding of unisolvent node sets in order to provide a construction algorithm, which is our first main result.

\section{Multivariate Interpolation}\label{Decomp} 
We provide theorems that allow solving the PIP in a way that fulfills requirements $(R1)$ and $(R2)$ stated in the introduction. We start by considering the multivariate interpolation problem on hyperplanes and then show that the general multivariate PIP can be decomposed into such hyperplane problems.

\subsection{Interpolation on Hyperplanes}\label{SC}
We consider the PIP on certain subplanes and therefore define:
\begin{definition}[affine transformation] 
Let $\tau : \R^m \lo \R^m$ be given by $\tau(x) = Ax +b $, where $A \in \R^{m\times m}$ is a full-rank matrix, i.e. $\mathrm{rank}(A) =m$, and $b \in \R^m$. Then we call 
$\tau$ an \emph{affine transformation} on $\R^m$. 
\end{definition}
\begin{definition}\label{proj}
For every ordered tuple of integers $i_1,\dots,i_k \in \N$, $i_{q} < i_p$ if $1 \leq q <p \leq k$, we consider  
$$H_{i_1,\dots, i_k}=\li\{(x_1,\dots,x_n) \in \R^m \mi x_j = 0 \,\, \text{if}\,\, j \not \in \{i_1,\dots,i_k\}\re\}$$ the $k$-dimensional hyperplanes spanned by the 
$i_1,\dots,i_k$-th coordinates. We denote by $ \pi_{i_1,\dots, i_k} : \R^m \lo H_{i_1,\dots,i_k}$ and $i_{i_1,\dots, i_k} : \R^k \hookrightarrow  \R^m$, with $i_{i_1,\dots, i_k}(\R^k)=H_{i_1,\dots,i_k}$, 
the natural projections and embeddings. 
We denote by
$\pi_{i_1,\dots,i_k}^* : \R[x_1,\dots,x_m] \lo \R[x_{i_1},\dots x_{i_k}]$ and $i^*_{i_1,\dots, i_k} : \R[y_{1},\dots y_{k}] \hookrightarrow \R[x_1,\dots,x_m]$ the induced projections and embeddings 
on the polynomial ring. 
\end{definition}
\begin{definition}[solution of the PIP]\label{Hxi} Let $m,n \in \mathbb{N}$, 
$\xi_1,\dots,\xi_m \in \R^m$ an orthonormal frame (i.e., $\li<\xi_i,\xi_j\re>=\delta_{ij}$, $ \forall 1\leq i,j\leq m$, where $\delta_{ij}$ denotes the Kronecker symbol), and $b \in \R^m$. 
For $\Ic \subseteq \{1,\dots,m\}$
we consider the hyperplane 
$$H_{\Ic,\xi,b}:= \li\{x \in \R^m \mi \li<x-b,\xi_i\re>=0\,, \forall \, i \in \Ic \re\}\,.$$ 
Given a function $f : \R^m \lo \R$ we say that a set of nodes $P_{k,n} \subseteq H_{\Ic,\xi,b}$ and a polynomial $Q \in \Pi_{m,n}$
\emph{solve the PIP} with respect to $k=\dim H_\Ic$, $n\in \N$ and $f$ on $H_\Ic$ if and only if $Q(p) =f(p)$ for all $p \in P_{k,n}$ and whenever there is  a $Q'\in \Pi_{m,n}$ and $Q'(p)=f(p)$
for all $p \in P_{k,n}$, then $Q'(x)=Q(x)$ for all $x \in H_{\Ic,\xi,b}$.
\end{definition}
\begin{definition}[induced transformation]Let  $H \subseteq \R^m$ be a hyperplane of dimension $k\in \N$ and $\tau : \R^m\lo \R^m$ an affine transformation such that 
$\tau(H) =H_{1,\dots,k}$.
Then we denote by
$$\tau^*: \R[x_1,\dots,x_m] \lo \R[x_1,\dots,x_m]$$
the \emph{induced transformation} on the polynomial ring defined over the monomials as:
$$ \tau^*(x_i) = \eta_1x_1+\cdots + \eta_mx_{m}\quad \text{with} \quad \eta=(\eta_1,\dots,\eta_m) = \tau(e_i)\,,$$
where $e_1,\dots,e_m$ denotes the standard Cartesian basis of $\R^m$.
\end{definition}
\begin{lemma}\label{TT} Let $m,n \in \N$ and $P_{m,n}$ be a unisolvent node set with respect to $(m,n)$. Further let $\tau : \R^m \lo \R^m$, $\tau(x) =Ax +b$ be an affine transformation. 
Then, $\tau(P_{m,n})$ is also a unisolvent node set with respect to $(m,n)$.
\end{lemma}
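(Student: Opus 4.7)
The cleanest route is through Proposition \ref{generic}, which characterizes unisolvence as the absence of a common algebraic hypersurface of degree at most $n$ containing the node set. My plan is to argue by contradiction: assume $\tau(P_{m,n})$ fails to be unisolvent, and then transport the offending hypersurface back through $\tau^{-1}$ to contradict unisolvence of $P_{m,n}$.

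In detail, the first step is to suppose there exists a nonzero $Q \in \Pi_{m,n}$ with $Q(\tau(p))=0$ for every $p \in P_{m,n}$. Define $Q':=Q\circ \tau = \tau^*(Q)$, where $\tau^*$ is the induced transformation on the polynomial ring. The second step is to verify that $Q' \in \Pi_{m,n}$. This amounts to observing that replacing each variable $x_i$ by the affine form $(Ax+b)_i = \sum_j A_{ij}x_j + b_i$ can only preserve or reduce the total degree, because substituting a degree-one polynomial into a monomial $x^\alpha$ produces a polynomial of total degree at most $|\alpha|$. Hence $\deg Q' \leq \deg Q \leq n$.

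The third step is to check that $Q' \neq 0$. Since $A$ has full rank, $\tau$ is a bijection of $\R^m$, so $\tau^*$ is a bijective endomorphism of $\R[x_1,\dots,x_m]$ with inverse induced by $\tau^{-1}$; in particular it is injective, so $Q' = \tau^*(Q) \neq 0$ whenever $Q \neq 0$. Equivalently, if $Q'$ vanished identically on $\R^m$, then $Q = Q' \circ \tau^{-1}$ would vanish identically, contradicting $Q \neq 0$. The fourth step closes the argument: for every $p \in P_{m,n}$ we have $Q'(p) = Q(\tau(p)) = 0$, so $P_{m,n}$ lies on the algebraic hypersurface cut out by the nonzero polynomial $Q' \in \Pi_{m,n}$ of degree at most $n$. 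By Proposition \ref{generic} this contradicts the assumed unisolvence of $P_{m,n}$, so $\tau(P_{m,n})$ must itself be unisolvent.

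There is no real obstacle here; the only substantive point is the degree-preservation under affine substitution, and the bijectivity of $\tau^*$ on $\Pi_{m,n}$, both of which are immediate from the invertibility of $A$. An alternative, equally short route is to write $V_{m,n}(\tau(P_{m,n})) = V_{m,n}(P_{m,n})\, M_{\tau^*}$, where $M_{\tau^*}$ is the matrix of $\tau^*$ on $\Pi_{m,n}$ in the monomial basis; since $\tau^*$ is invertible on $\Pi_{m,n}$, $M_{\tau^*}$ is invertible, and the determinants of the two Vandermonde matrices agree up to the nonzero factor $\det M_{\tau^*}$. I would prefer the contradiction argument above because it avoids bookkeeping with basis orderings.
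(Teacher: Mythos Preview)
Your proof is correct and follows essentially the same route as the paper: assume a nonzero $Q\in\Pi_{m,n}$ vanishes on $\tau(P_{m,n})$, pull it back via $\tau^*$ to obtain a nonzero polynomial of degree $\le n$ vanishing on $P_{m,n}$, and invoke Proposition~\ref{generic} for the contradiction. You are simply more explicit than the paper about why $\deg(Q\circ\tau)\le n$ and why $Q\circ\tau\neq 0$, and your alternative Vandermonde-matrix argument is a nice aside but not needed.
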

\begin{proof} Assume there exists a polynomial \mbox{$Q_0 \in \Pi_{m,n}\subseteq \R[x_1,\dots,x_m]$} such that \mbox{$Q_0(\tau(P_{m,n}))=0$}. Then setting \mbox{$Q_1:=\tau^*(Q_0)$} yields a non--zero polynomial of 
with  $\deg(Q_1)\leq n$ and
$$Q_1(P_{m,n}) = \tau^*(Q_0)(P_{m,n})
= Q_0(\tau(P_{m,n}))=0\,, $$
which is a contradiction to $P_{m,n}$ being unisolvent. Therefore, $\tau(P_{m,n})$ must be unisolvent.
\qed\end{proof}

Next we use the ingredients above to decompose the PIP.

\subsection{Decomposition of the Multivariate PIP onto Hyperplanes}
Section \ref{SC} provides  the ingredients to prove our first key result. 
To avoid confusion, we note that a $0$-dimensional hyperplane $H \subseteq \R^m$, $m \geq 1$, is given by a single point $H=\{\mathrm{point}\}$, such that
the PIP on $H$ is solved by evaluating $f$ at that point. Vice versa, the zeroth-order PIP (i.e., a constant) with respect to $f : \R^m\lo \R$ and $n=0$ is solved by evaluating $f$ at some point 
$q \in \R^m$.

\begin{theorem} Let $m,n \in \N$, $m \geq 1$, and $P \subseteq \R^m$, such that:
\begin{enumerate}
 \item[i)] There exists a hyperplane $H \subseteq \R^m$ of co-dimension 1 such that $P_1:= P \cap H$ satisfies $\#P_1 = N(m-1,n)$ and is unisolvent with respect to $H$, i.e., 
 by identifying $H\cong \R^{m-1}$ the Vandermonde matrix $V_{m-1,n}(P_1)$ is regular.
  \item[ii)]  The set $P_2= P \setminus H$ satisfies $\#P_2 = N(m,n-1)$ and is unisolvent with respect to $\mbox{(m,n-1)}$, i.e., the Vandermonde matrix $V_{m,n-1}(P_2)$ is regular.
\end{enumerate}
Then $P$ is a unisolvent node set. 
\label{GN}
\end{theorem}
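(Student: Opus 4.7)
The plan is to invoke Proposition \ref{generic}, reducing the claim to showing that any $Q \in \Pi_{m,n}$ with $Q(P) = 0$ must be the zero polynomial. Thus, suppose such a $Q$ exists; I want to derive $Q \equiv 0$ using the two hypotheses sequentially.

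First I would use Lemma \ref{TT} to normalize the geometry. The hyperplane $H$ has codimension $1$, so there is an affine transformation $\tau : \R^m \lo \R^m$ with $\tau(H) = H_{1,\dots,m-1} = \{x_m = 0\}$. By Lemma \ref{TT}, $\tau(P_1) \subseteq H_{1,\dots,m-1}$ remains unisolvent with respect to $(m-1,n)$ (viewed on $\R^{m-1}$ via the embedding $i_{1,\dots,m-1}$), and $\tau(P_2)$ remains unisolvent with respect to $(m,n-1)$; moreover $\tau^*(Q)$ still vanishes on $\tau(P)$ and still lies in $\Pi_{m,n}$. So without loss of generality I may assume $H = \{x_m = 0\}$.

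Next comes the decomposition step. The restriction $Q(x_1,\dots,x_{m-1},0) = \pi^*_{1,\dots,m-1}(Q)$ belongs to $\Pi_{m-1,n}$ (degrees can only drop under substitution), and it vanishes on $P_1$ by hypothesis (i). Since $P_1$ is unisolvent for $(m-1,n)$, Proposition \ref{generic} applied on $\R^{m-1}$ forces $Q(x_1,\dots,x_{m-1},0) \equiv 0$. A standard factorization argument in the polynomial ring $\R[x_1,\dots,x_{m-1}][x_m]$ then yields $Q(x) = x_m \cdot R(x)$ for some $R \in \Pi_{m,n-1}$, since dividing out the linear factor $x_m$ reduces the total degree by exactly one.

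Finally I would use hypothesis (ii) to kill $R$. For every $p = (p_1,\dots,p_m) \in P_2$, we have $p \notin H$ so $p_m \neq 0$; combined with $Q(p) = p_m R(p) = 0$ this gives $R(p) = 0$ for all $p \in P_2$. Since $R \in \Pi_{m,n-1}$ and $P_2$ is unisolvent with respect to $(m,n-1)$, Proposition \ref{generic} forces $R \equiv 0$, hence $Q \equiv 0$, as desired. The card count $N(m-1,n) + N(m,n-1) = N(m,n)$ (Pascal's rule) confirms that $|P|$ is the right size for $V_{m,n}(P)$ to be square. The only subtle step is the affine normalization: one must check that $\tau^*$ preserves both total degree and the vanishing sets, which is immediate from the definition of $\tau^*$ over the monomials, but it is the ingredient that makes the otherwise coordinate-dependent factorization $Q = x_m R$ legitimate in full generality.
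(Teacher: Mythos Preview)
Your proof is correct and follows essentially the same route as the paper: normalize $H=\{x_m=0\}$ via Lemma~\ref{TT}, use unisolvence of $P_1$ to kill $Q|_H$, factor out $x_m$, and then use unisolvence of $P_2$ on the quotient. The only cosmetic difference is that the paper phrases the last step contrapositively (exhibiting a point of $P_2$ where $Q$ does not vanish) rather than concluding $R\equiv 0$ directly, and it unpacks the factorization $Q=x_m R$ by explicitly splitting $Q$ into monomials with and without $x_m$; your invocation of the root--factor theorem in $\R[x_1,\dots,x_{m-1}][x_m]$ is the cleaner way to say the same thing.
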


\begin{proof} 
Due to Lemma \ref{TT}, unisolvent node sets remain unisolvent under affine transformation. Thus, by choosing the appropriate transformation $\tau$,  we can assume  w.l.o.g.~that $H= H_{1,\dots, m-1}$. 
For any polynomial $Q \in \R[x_1,\dots,x_m]$ there holds 
$Q\big(\pi_{1,\dots,m-1}(p)\big) = \pi_{1,\dots,m-1}^*\big(Q(p)\big)$ for all $p \in H$. Thus,  by $(i)$ we observe that  
whenever there is a $Q \in \R[x_1,\dots,x_m]$ with $Q(P_1) =0$, then $\deg(Q) \geq \deg\big(\pi_{1,\dots,m-1}^*(Q)\big) > n$. Or, if $\deg\big(\pi_{1,\dots,m-1}^*(Q)\big) \leq \deg(Q) \leq n$, we consider the polynomial 
$\bar Q_1:= Q - i^*_{1,\dots,m-1}\big(\pi_{1,\dots,m-1}^*(Q)\big)$, which consists of all monomials sharing the variable $x_m$ and $\bar Q_2:= Q -\bar Q_1$ consisting of all monomials not sharing $x_m$.  
We claim that $\bar Q_2 = 0$. Certainly, $\bar Q_1(x) = 0$ for all $x \in H$.  Since $P_1$ is unisolvent there are $p \in P_1$ with $\bar Q_2(p)= \pi_{1,\dots,m-1}^*(\bar Q_2(p)) \not= 0$ implying $Q(p) \not = 0$,
which contradicts our assumption on $Q$ and therefore yields $\bar Q_2 = 0$ as claimed. 
In light of this fact, 
$Q$ can be decomposed into polynomials $Q=Q_1\cdot Q_2 \quad \text{where} \quad   Q_2(x_1,\dots,x_m) =x_m\,.$ Since $\deg(Q_1)\leq n-1$ and $P_2$ is unisolvent we have that $P_2 \not \subseteq Q_1^{-1}(0)$.
At the same time, $P_2 \cap H =\emptyset $ implies that $Q_2(p) \not = 0$ for all $p \in P_2$. Hence, there is $p \in P_2$ with $Q(p)\not =0$, proving the theorem due to Proposition \ref{generic}. 
\qed\end{proof}

The question arises whether the decomposition of unisolvent nodes given in Theorem \ref{GN} allows us to decompose the PIP into smaller, and therefore simpler, sub-problems.
Indeed we obtain: 

\begin{theorem} \label{SV} Let $m,n \in \N$, $m \geq 1$,  $f : \R^m\lo \R$ be a computable function, \linebreak 
$H \subseteq \R^m$ a hyperplane of co-dimension 1, $Q_H \in \Pi_{m,1}$ a polynomial satisfying $Q_H^{-1}(0) =H$, 
and $P_{m,n}=P_1\cup P_2\subseteq \R^m$ such that $(i)$ and $(ii)$ of Theorem \ref{GN} hold with respect to $H$.
Require $Q_1,Q_2 \in \Pi_{m,n}$ to be such that: 
\begin{enumerate}
 \item [i)] $Q_1$  solves the PIP with respect to $f$ and $P_1$ on $H$.
 \item [ii)] \mbox{$\deg(Q_2) \leq n-1$} and $Q_2$  solves the PIP with respect to \mbox{$(m,n-1)$}, $f_1:=(f-Q_1)/Q_H$, and $P_2=P\setminus H$ on $\R^m$.
\end{enumerate}
Then, $Q_{m,n,f} =Q_1+Q_HQ_2$ is the uniquely determined polynomial of $\deg(Q)\leq n$ that solves the PIP with respect to $f$ and $P_{m,n}$ on $\R^m$.
\end{theorem}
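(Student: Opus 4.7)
The plan is to show two things: first, that $Q := Q_1 + Q_H Q_2$ has degree at most $n$ and fits $f$ on every point of $P_{m,n}$; second, that it is the \emph{unique} such polynomial, invoking Theorem \ref{GN} to conclude that $P_{m,n}$ is unisolvent. This two-step strategy mirrors the standard inductive construction of Newton interpolants in one dimension, now carried out along the splitting $P_{m,n}=P_1\cup P_2$ given by the hyperplane $H$.

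For the degree bound, I would simply combine $\deg(Q_1)\leq n$, $\deg(Q_H)\leq 1$, and the assumption $\deg(Q_2)\leq n-1$, so that $\deg(Q_HQ_2)\leq n$ and consequently $\deg(Q)\leq n$. For the fitting condition on $P_1$, I use that $Q_H^{-1}(0)=H\supseteq P_1$, hence $Q_H(p)=0$ and $Q(p)=Q_1(p)=f(p)$ for every $p\in P_1$ by assumption $(i)$. For the fitting condition on $P_2$, the key observation is that $P_2\cap H=\emptyset$, so $Q_H(p)\neq 0$ for all $p\in P_2$ and the quotient $f_1(p)=(f(p)-Q_1(p))/Q_H(p)$ is well defined. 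By assumption $(ii)$ we then have $Q_2(p)=f_1(p)$, and solving for $f(p)$ gives
\begin{equation*}
Q(p)=Q_1(p)+Q_H(p)Q_2(p)=Q_1(p)+Q_H(p)f_1(p)=f(p)\,.
\end{equation*}
Hence $Q$ solves the PIP with respect to $f$ and $P_{m,n}$.

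For uniqueness, I would invoke Theorem \ref{GN}: the hypotheses on $P_1$ and $P_2$ are precisely $(i)$ and $(ii)$ of that theorem, so $P_{m,n}$ is unisolvent. Consequently, any two polynomials in $\Pi_{m,n}$ that agree with $f$ on $P_{m,n}$ must coincide, which singles out $Q$ as the unique solution.

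I do not anticipate a serious obstacle here, since the theorem is essentially a recursive consistency statement. The only point that requires any care is the well-definedness of $f_1$ outside $H$, which is immediate from $P_2\cap H=\emptyset$, together with verifying that the quotient $(f-Q_1)/Q_H$ is only used at points of $P_2$ (nowhere is it claimed to be a polynomial, merely a computable function at the relevant nodes, which is all that assumption $(ii)$ needs).
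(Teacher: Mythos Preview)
Your proposal is correct and follows essentially the same approach as the paper's proof: verify the degree bound, check the fitting condition separately on $P_1$ (using $Q_H|_H=0$) and on $P_2$ (using $Q_H\neq 0$ off $H$ so that $f_1$ is well defined there), and then invoke Theorem~\ref{GN} for unisolvence and hence uniqueness. There is no substantive difference between your argument and the paper's.
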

\begin{proof}  By our assumptions on $Q_H$ and $Q_1$ we have that $Q(x)=Q_1(x)$ $\forall x \in H$ and therefore $Q(p) =f(p)$ $\forall p \in P_1 = P_{m,n} \cap H$. 
At the same time, $Q_H(x) \not =0$ for all $x \not \in H$. Therefore, $f_1$ is well defined on $P_2$ and $Q(p)= Q_1(p) + f_1(p)Q_H(p)=f(p)$ $\forall p \in P_2=P_{m,n}\setminus H$. 
Hence, $\deg(Q)\leq n$, and $Q$ solves the PIP with respect to $f$ 
and a unisolvent set of nodes $P_{m,n}\subseteq \R^m$. Thus, $Q_{m,n,f}$ is the unique solution of the PIP with respect to $P_{m,n}$ and $f$. 
 \qed\end{proof}
 
 \begin{remark}
 Note that $Q_H$ can be constructed by choosing a (usually unit) normal vector $\nu \in \R^m$ onto $H$ and a vector $b \in H$, setting 
 $$Q_H(x) = \nu\cdot(x - b)\,.$$ Indeed, this guarantees that $Q_H(x)=0$ for all $x \in H$ and 
 $Q_H(x)\not =0$ for all $x \in \R^m\setminus H$.
  \end{remark}
  
As a consequence of  Theorems \ref{GN} and \ref{SV}, we can close this section by stating the first part of Main Result I and delivering its proof. 
 \begin{theorem}Let $m,n \in \N$, and $f : \R^m \lo \R$ be a computable function. Then, there exists an algorithm with runtime complexity in $\Oc\big(N(m,n)^2\big)$,  
 requiring storage in $\Oc\big(N(m,n)\big)$, that computes:
 \begin{enumerate}
  \item[i)] A unisolvent node set $P_{m,n} \subseteq \R^m$.
  \item[ii)] The  normal form coefficients $c_0,\dots,c_{N(m,n)-1}$ of the interpolation polynomial $Q_{m,n,f} \in \Pi_{m,n}$ with $Q_{m,n,f}(p) = f(p)\,, \,\forall p \in P_{m,n}$.
 \end{enumerate}
\label{mainT}
 \end{theorem}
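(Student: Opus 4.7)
The plan is to turn the decomposition Theorems \ref{GN} and \ref{SV} into a recursive algorithm and verify the complexity bounds by induction on $(m,n)$. On input $(m,n,f)$ with $m,n\geq 1$, I would fix the coordinate hyperplane $H = H_{1,\dots,m-1}$ of co-dimension one together with the affine polynomial $Q_H(x)=x_m$ vanishing on it. A first recursive call on $(m-1,n,f\circ i_{1,\dots,m-1})$ returns a unisolvent set $P_1\subseteq \R^{m-1}$, identified with $H$, and the normal-form coefficients of $\tilde Q_1\in \Pi_{m-1,n}$; embedding via $i^*_{1,\dots,m-1}$ yields $Q_1\in \Pi_{m,n}$ with at most $N(m-1,n)$ nonzero monomials (none involving $x_m$). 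A second recursive call on $(m,n-1,f_1)$ with $f_1 := (f-Q_1)/Q_H$ returns nodes $P_2$ (shifted off $H$ in the $x_m$-direction by a chosen nonzero amount) together with $Q_2\in \Pi_{m,n-1}$. The algorithm then outputs $P_{m,n} := P_1 \cup P_2$ and $Q_{m,n,f} := Q_1 + Q_H\cdot Q_2$. The base cases $m=0$ or $n=0$ reduce to $N(m,n)=1$, i.e.\ a single evaluation of $f$ and a constant interpolant.

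Correctness is immediate from the decomposition theorems: Theorem \ref{GN} guarantees that $P_{m,n}$ is unisolvent, and Theorem \ref{SV} guarantees that $Q_1+Q_H Q_2$ is the unique interpolant. The only point needing care is that the second recursive call must return nodes off $H$, which by Lemma \ref{TT} can be arranged by postcomposing the returned node set with a nonzero translation in $x_m$.

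For the runtime, let $T(m,n)$ denote the cost. The assembly step is dominated by evaluating the lifted $Q_1$ (which has at most $N(m-1,n)$ nonzero coefficients) at the $N(m,n-1)$ points of $P_2$; this costs $\Oc\bigl(N(m-1,n)\cdot N(m,n-1)\bigr)$ operations, and the subsequent multiplication by $Q_H$ (of degree one) and addition with $Q_1$ is cheaper. Hence
\begin{equation*}
T(m,n) \;\leq\; T(m-1,n) + T(m,n-1) + \beta \, N(m-1,n)\, N(m,n-1)
\end{equation*}
for some constant $\beta>0$. Using Pascal's identity $N(m,n) = N(m-1,n) + N(m,n-1)$ together with $a^2+b^2 = (a+b)^2 - 2ab$, a straightforward induction yields $T(m,n)\leq C\,N(m,n)^2$ for any $C\geq \beta/2$, since the inductive step reads
\begin{equation*}
T(m,n) \;\leq\; C\bigl(N(m,n)^2 - 2N(m-1,n) N(m,n-1)\bigr) + \beta\, N(m-1,n) N(m,n-1)\,.
\end{equation*}
The memory bound $\Oc(N(m,n))$ follows by running the two recursive calls sequentially and reusing the working space of the first one after $P_1$ and the coefficients of $Q_1$ have been stored.

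The main obstacle I expect is precisely the sharp bookkeeping of the combining step: one must argue that the evaluation of $Q_1$ costs $\Oc\bigl(N(m-1,n)\cdot N(m,n-1)\bigr)$ rather than $\Oc\bigl(N(m,n)\cdot N(m,n-1)\bigr)$, because only the $N(m-1,n)$ monomials not involving $x_m$ contribute. Any accounting in which the combining term is quadratic in $N(m,n)$ itself fails to telescope under Pascal's identity and instead grows superquadratically along the recursion tree, so the argument hinges on this geometric asymmetry between $Q_1$ and the ambient polynomial space $\Pi_{m,n}$.
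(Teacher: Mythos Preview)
Your proposal is correct and follows essentially the same recursive decomposition as the paper's own proof, invoking Theorems \ref{GN} and \ref{SV} and closing the induction via Pascal's identity $N(m,n)=N(m-1,n)+N(m,n-1)$. The only difference is in where the combining cost is located: the paper bounds the cost of expanding the product $Q_H Q_2$ into normal form by $(m{+}1)N(m,n{-}1)\leq N(m{-}1,n)N(m,n{-}1)$, whereas you (arguably more carefully, since $f_1$ is no longer evaluable in $\Oc(1)$) identify the dominant cost as evaluating the $N(m{-}1,n)$-coefficient polynomial $Q_1$ at the $N(m,n{-}1)$ nodes of $P_2$; both routes produce the same cross term and the same inductive inequality.
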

 
 \begin{proof} We start by proving $(i)$ and $(ii)$ with respect to the runtime complexity. To do so, we claim that there is a constant $C\in \R^+$ and an algorithm computing $(i)$ and $(ii)$ in less than \mbox{$CN(m,n)^2$} 
 computation steps. To prove this claim, we argue by induction on $N(m,n)$.
 If $N(m,n)=1$, then $m=0$ or $n=0$. Thus, interpolating $f$ is given by evaluating $f$ at one single node, which can be done in $\Oc(1)$ by our assumption on $f$. 
 
 Now let $N(m,n)>1$. Then $m>0$ and we choose $\nu,b\in \R^m$ with $\nol\nu \nor=1$ and consider the hyperplane $H=Q_H^{-1}(0)$, $Q_H(x)=\nu(x-b)$.
 By identifying $H\cong \R^{m-1}$, induction yields that we can determine a set of unisolvent nodes $P_1\subseteq H$ in less than \mbox{$CN(m-1,n)^2$} computation steps. 
 Induction also yields that a set $P_2 \subseteq \R^m$ of unisolvent nodes can be determined with respect to $n-1$ in less than
 \mbox{$CN(m-1,n)^2$} computation steps. By translating $P_2$ with $\lambda \nu$, i.e., setting $P_2'=P_2 + \lambda \nu $, $\lambda \in \R$, we can guarantee that 
 $P_2 \cap H = \emptyset$. Hence, the union $P_{m,n}=P_1 \cup P_2'$ of the corresponding unisolvent sets of nodes is also unisolvent due to Theorem \ref{GN}, proving $(i)$. 
 
To show $(ii)$ we have to compute the coefficients of $Q_{m,n,f}$ in normal form. 
By induction, a  polynomial $Q_1 \in \Pi_{m,n}$ that solves the PIP with respect to $f$ and $P_1$ on $H$ can be determined in  normal form in less than $CN(m-1,n)^2$ steps. 
We consider $f_1' : \R^m \lo \R$ with $f_1'(x):= \big(f(x-\lambda \nu) - Q_1(x-\lambda \nu)\big)/Q_H(x-\lambda \nu)$. By induction, we can compute $Q_2\in\Pi_{m,n-1}$  
in less than $CN(m,n-1)^2$ computation steps, such that $Q_2$ solves the PIP with respect to $f_1'$ and $P_2$, \mbox{$(m,n-1)$}. 
Thus, $Q_2$ solves the PIP with respect to 
$f_1(x)= \big(f(x) - Q_1(x)\big)/Q_H(x) = f_1'(x+\lambda \nu)$, $P_2'$, and \mbox{$(m,n-1)$}.
Due to Theorem \ref{SV}, we have that $Q_1+Q_HQ_2$ solves the PIP with respect to $f$ and $P_{m,n}$. 
It remains to bound the steps required for computing the normal form of $Q_1+Q_HQ_2$. The bottleneck herein lies in the computation of $Q_HQ_2$, which requires $C_2(m+1)N(m,n-1)$, $C_2 \in \R^+$ 
 computation steps. Observe that $m \leq N(m-1,n)$ for $n\geq 1$, which shows that $Q_1+Q_HQ_2$ can be computed in less than $C_3N(m-1,n)N(m,n-1)$, $C_3 \in \R^+$ computation steps. It is $N(m,n)=N(m-1,n)+N(m,n-1)$. 
 Hence, by  assuming $2C\geq C_3$, we have that 
 $$ C\big(N(m-1,n)^2 + N(m,n-1)^2\big) + C_3N(m-1,n)N(m,n-1) \leq CN(m,n)^2 \,,$$
 proving $(ii)$. 
 
 We obtain the storage complexity by using an analogous induction argument. If $N(m,n)=1$ then $m=0$ or $n=0$ and $f: \{\mathrm{point}\}\lo \R$ is interpolated by evaluating $f$ at one point. 
 Therefore we have to store that point yielding storage complexity $\Oc(1)$. 
  If  $N(m,n)>1$, using the same splitting of the problem as above, induction yields that there is $D \in \R^+$ such that we have to store at most $DN(m-1,n)$ and $DN(m,n-1)$ numbers 
  for each of the two sub-problems, respectively. Altogether we need to store 
$D\big(N(m-1,n) +N(m,n-1)\big) = DN(m,n)$ numbers, proving the storage complexity. 
  \qed\end{proof}

So far, we have provided an existence result stating that the PIP can be solved efficiently in $\Oc(N(m,n)^2)$.
The derivation of an actual algorithm based on the recursion implicitly used in Theorem \ref{mainT} is given in the next section.

\section{Recursive Decomposition of the PIP}\label{RPIP}   
Based on the recursion expressed in Theorems \ref{GN} and \ref{SV}, we can derive an efficient algorithm to compute solutions to the PIP in a numerically robust and computationally efficient way. The resulting algorithm derived hereafter, called PIP-SOLVER is based on a binary tree $T_{m,n}$ as a straightforward data structure resulting from the recursive problem decomposition.
It also uses a \emph{multivariate divided difference scheme} to solve the sub-problems, leading to better numerical robustness and accuracy than other approaches (comparisons will be given in Section \ref{EX}).

\subsection{Multivariate Newton Polynomials }
The essential data structure required to formulate multivariate Newton interpolation is given by the following binary tree: 
\begin{definition}[PIP tree] Let $m,n \in \mathbb{N}$. We define a binary tree \linebreak $T_{m,n}=(\Vc_{m,n},\Ec_{m,n})$
with vertex labeling $\sigma : \Vc_{m,n}\lo \N\times \N$, $\sigma(v)=(\dim(v),\deg(v))$ as follows: 
We start with a root $\rho$ and set \mbox{$\dim(\rho)=m$}, \mbox{$\deg(\rho)=n$}. For a vertex $v \in \Vc_{m,n}$ with  \mbox{$\dim(v) >0$} and \mbox{$\deg(v)> 0$} we 
introduce a left child $l$ with label \mbox{$\sigma(l)=(\dim(v)-1,\deg(v))$} and a right child $r$ with label  \mbox{$\sigma(r)=(\dim(v),\deg(v)-1)$}.

Furthermore, we consider the set $\Gamma_{m,n}$ of \emph{leaf paths} $\gamma$ starting at $\rho$ and ending in a leaf $v_\gamma \in L_{m,n} \subset \Vc_{m,n}$ of $T_{m,n}$. 
For every leaf path $\gamma \in \Gamma_{m,n}$ we define
\begin{equation}
 [\gamma]=(k_1,\dots,k_m)^\mathsf{T} \in \N^{m}\,, \quad k_i = \# \li\{v \in \gamma\mi \dim(v)=i\re\}
\end{equation}
to be its \emph{descent vector}, 
which uniquely identifies the leaf path $\gamma$.
\label{TMN}
\end{definition}
Figure \ref{TMN} illustrates the tree $T_{m,n}$ and the concept of a leaf path $\gamma$.  As one can easily verify, the depth of $T_{m,n}$ is given by $\mathrm{depth}(T_{m,n})=m+n-1$, while the  
total number $\# L_{m,n}$ of leaves of $T_{m,n}$ is given by $N(m,n)$. 

\begin{figure}[t!]
\center
\input{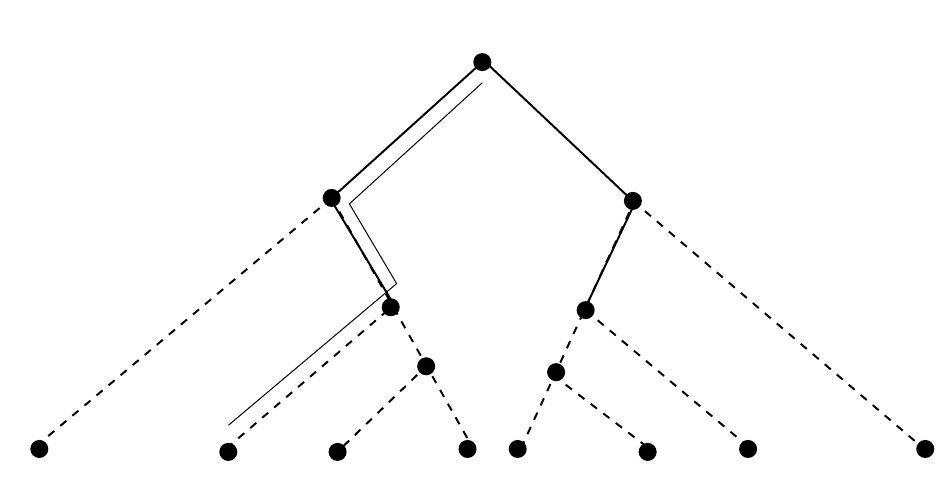_t}
\caption{The binary tree $T_{m,n}$ and a leaf path $\gamma\in \Gamma_{m,n}$ with descent vector $[\gamma]$.}
\end{figure}

Next we introduce the multidimensional generalization of Newton nodes, given by a \emph{non-uniform, affine-transformed, sparse T-grid}:

\begin{definition}[Multidimensional Newton nodes] Given $m,n \in \N$ and a set of nodes  $P_{m,n}\subseteq \R^m$ with $\#P_{m,n}=N(m,n)$, we say $P_{m,n}$ are {\em multidimensional Newton nodes} generated by 
$$\PP_{m,n}: = \oplus_{i=1}^m\PP_i\,, \quad \PP_i=\{p_{i,1},\dots,p_{i,n+1}\} \subseteq \R$$ if and only if 
there exists  an affine transformation $\tau : \R^m \lo \R^m$ 
such that for every $p \in P_{m,n}$ there is exactly one  $\gamma \in \Gamma_{m,n}$ with $[\gamma]=(k_1,\dots,k_m)^T\in \N^m$ such that: 
$$ \tau(p)= (\bar p_{1,k_1},\dots,\bar p_{m,k_{m}})^\mathsf{T}\,, \,\text{where we set } \,\, \bar p_{k_l}= \li\{\begin{array}{ll}
                                                                                                     p_{l,n+1} &\text{if}\,\, k_{l'}=0 \,,\,\,\forall \, l'<l\\
                                                                                                     p_{l,k_l} & \text{else}\,.
                                                                                                    \end{array}\re.
$$
If $\tau=\id_{\R^m}$, we call $P_{m,n}$ \emph{canonical multidimensional Newton nodes}.
\label{NTN}
\end{definition}
Considering the special case of canonical multidimensional Newton nodes, and assuming $\PP_i =\PP_{i'}$ for all occurring $1 \leq i,i' \leq m$, an intuition of the definition in terms of sparse $T$-grids reveals. Indeed, $T$-grids are based on decomposing space along a binary tree, as we do here. Furthermore, the tree $T_{m,n}$ is a sparse binary tree, i.e., the full tree depth is only reached by few of the leaves.

Note that the $P_{m,n}$ can be recursively split into subsets $P_1,P_2 \subseteq P_{m,n}$ with $P_1\subseteq H$ for some hyperplane $H\subseteq \R^m$, such that the assumptions of Theorem \ref{GN} are satisfied for every recursion step. Even more, the recursion satisfies the \emph{defining property of Newton nodes}, which is that the projection 
\begin{equation} \label{parallel}
 \pi_H(P_2) \subseteq H \,\,\, \text{of $P_2$ onto}\,\,\, H\,\,\, \text{satisfies}\,\,\, \pi_H(P_2)\subseteq P_1\,.
\end{equation}
This fact is going to be the key ingredient for the proof of Theorem \ref{DDST}.
 
\begin{definition}[Multivariate Newton polynomials] Given $m,n \in \N$, $T_{m,n}$, $\Gamma_{m,n}$, and multidimensional Newton nodes $P_{m,n}$ generated by $\PP_{m,n}$.
For $\gamma \in \Gamma_{m,n}$ with descent vector $[\gamma]=(k_1,\dots,k_m)^\mathsf{T}\in \N^{m}$ and $x=(x_1,\dots,x_m)^\mathsf{T}\in \R^m$, we call 
\begin{equation}
 N_{[\gamma]}(x)= \prod_{i=1}^m\prod_{l=1}^{k_i-1}(x_i-p_{i,l})
\end{equation}
the {\em multivariate Newton polynomials} with respect to $P_{m,n}$.
\end{definition}

\begin{proposition} Let $m,n \in \N$, $f: \R^m \lo \R$ be a function, and $P_{m,n}$ a set of multidimensional Newton nodes generated by $\PP_{m,n}$. Then, there exist unique coefficients $c_{[\gamma]}\in \R$, $\gamma \in \Gamma_{m,n}$, such that: 
$$ Q(x) = \sum_{\gamma \in \Gamma(m,n)} c_{[\gamma]} N_{[\gamma]}(x)$$ 
satisfies $Q(p)=f(p)$ for all $p \in P_{m,n}$. Since $Q \in \Pi_{m,n}$, $Q$ is the unique solution of the PIP with respect to $(m,n,f,P_{m,n})$, implying that $P_{m,n}$ is unisolvent.
\label{UniN} 
\end{proposition}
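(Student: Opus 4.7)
I proceed by induction on $N(m,n)$, exploiting the recursive structure of the tree $T_{m,n}$ at its root. The base case $N(m,n)=1$ (i.e.\ $m=0$ or $n=0$) is immediate: there is a unique leaf path, $N_{[\gamma]}\equiv 1$, $P_{m,n}=\{p\}$ is a single point, and the unique coefficient $c_{[\gamma]}=f(p)$ gives the interpolant $Q\equiv f(p)\in\Pi_{m,n}$.

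For the inductive step with $m,n\geq 1$, split the tree at the root as $T_{m,n}=\{\rho\}\cup T_{m-1,n}\cup T_{m,n-1}$, which induces a partition $\Gamma_{m,n}=\Gamma_L\sqcup\Gamma_R$ of the leaf paths according to the first move from $\rho$. First I would establish a recursive identity for the Newton polynomials: for $\gamma\in\Gamma_L$ identified with $\gamma'\in\Gamma_{m-1,n}$, the coordinate $k_m$ of $[\gamma]$ equals $1$, the $x_m$-product in $N_{[\gamma]}$ is empty, and so $N_{[\gamma]}^{(m,n)}(x)=N_{[\gamma']}^{(m-1,n)}(x_1,\dots,x_{m-1})$. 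For $\gamma\in\Gamma_R$ identified with $\gamma'\in\Gamma_{m,n-1}$, the coordinate $k_m$ is at least $2$, so a single factor $(x_m-p_{m,1})$ splits off, giving $N_{[\gamma]}^{(m,n)}(x)=(x_m-p_{m,1})\cdot N_{[\gamma']}^{(m,n-1)}(x)$, where the sub-problem uses the shifted generator $\PP'_m=\{p_{m,2},\dots,p_{m,n+1}\}$ and the unchanged $\PP'_i=\PP_i$ for $i<m$. In parallel, I would establish the direct-sum decomposition $\Pi_{m,n}=\widetilde\Pi_{m-1,n}\oplus(x_m-p_{m,1})\,\Pi_{m,n-1}$, with $\widetilde\Pi_{m-1,n}$ the subspace of polynomials independent of $x_m$: for any $Q\in\Pi_{m,n}$ set $Q_1(x_1,\dots,x_{m-1}):=Q(x_1,\dots,x_{m-1},p_{m,1})\in\widetilde\Pi_{m-1,n}$; the factor theorem then gives $Q-Q_1=(x_m-p_{m,1})Q_2$ with $Q_2\in\Pi_{m,n-1}$, and Pascal's identity $N(m-1,n)+N(m,n-1)=N(m,n)$ guarantees the sum is direct.

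Combining these ingredients, the inductive hypothesis applied to the two sub-problems supplies bases $\{N_{[\gamma']}^{(m-1,n)}\}$ of $\widetilde\Pi_{m-1,n}$ and $\{N_{[\gamma']}^{(m,n-1)}\}$ of $\Pi_{m,n-1}$; multiplying the latter by $(x_m-p_{m,1})$ and taking the disjoint union yields the full family $\{N_{[\gamma]}^{(m,n)}\}_{\gamma\in\Gamma_{m,n}}$ as a basis of $\Pi_{m,n}$. Applying Theorem \ref{GN} to the split $P_{m,n}=P_1\cup P_2$ prescribed by Definition \ref{NTN} yields unisolvence of $P_{m,n}$, so the evaluation matrix $(N_{[\gamma]}(p))_{\gamma,p}$ (which equals the ordinary Vandermonde matrix composed with the invertible change of basis from monomials to multivariate Newton polynomials) is invertible, whence the coefficients $c_{[\gamma]}$ exist and are unique, making $Q\in\Pi_{m,n}$ the unique PIP solution. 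The main obstacle I anticipate is the bookkeeping in the recursive identification of descent vectors across $T_{m,n}$, $T_{m-1,n}$, and $T_{m,n-1}$, together with verifying that Definition \ref{NTN} indeed makes $P_1$ a set of multidimensional Newton nodes on the hyperplane $\{x_m=p_{m,1}\}$ and $P_2$ a shifted set of multidimensional Newton nodes for $(m,n-1)$, so that the hypotheses of Theorem \ref{GN} are met. Once this matching is in place, the algebraic portion of the argument follows cleanly.
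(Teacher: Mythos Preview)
Your proposal is correct and follows essentially the same approach as the paper: induction on $N(m,n)$, splitting $P_{m,n}$ along the hyperplane $H=\{x_m=p_{m,1}\}$ into sub-node-sets for $(m-1,n)$ and $(m,n-1)$, and invoking Theorem~\ref{GN} to conclude unisolvence. Your explicit verification of the direct-sum decomposition $\Pi_{m,n}=\widetilde\Pi_{m-1,n}\oplus(x_m-p_{m,1})\Pi_{m,n-1}$ and the resulting basis property of the $N_{[\gamma]}$ is more detailed than the paper's proof, which establishes only unisolvence and leaves the Newton-basis property implicit (it is noted just after the proposition via the lower-triangular Vandermonde observation); your anticipated bookkeeping obstacle about matching descent vectors across the subtrees is exactly the assertion the paper makes without further comment.
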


\begin{proof} We argue by induction on $N(m,n)$. If $N(m,n)=1$ then $P_{m,n}=\{p\}$ consists of one point. Therefore, $Q(x)=f(p)$ and the claim holds. If $N(m,n)>1$, then we can assume w.l.o.g.~that $P_{m,n}$ are canonical, i.e., that the affine transformation $\tau$ in Definition \ref{NTN} is the identity, $\tau = \id_{\R^m}$ (see Lemma \ref{TT} and Definition \ref{NTN}). 
We consider 
$$P_1=\li\{p=(p_1,\dots,p_{m})^\mathsf{T} \in P_{m,n} \subseteq \R^m \mi p_m=p_{m,1}\in \PP_{m}\re\}\,,$$ 
$P_2 =P_{m,n}\setminus P_1$, and $H= \li\{x \in \R^m \mi x_m =p_{m,1}\re\}$. Now note that
$P_1$ is a multidimensional Newton node set on $H$ w.r.t.~\mbox{$(m-1,n)$} and $P_2$ is a multivariate Newton node set w.r.t.~\mbox{$(m,n-1)$} on $\R^m$. Hence, by induction, $P_1,P_2$ are unisolvent.  
Thus, $P_1,P_2$, and $H$ satisfy the assumptions of Theorem \ref{GN}, which completes the proof.
\qed\end{proof}
Indeed, in dimension $m=1$ the $N_{[\gamma]}(x)$ correspond to the classical Newton polynomials from Eq.~\eqref{NewtPoly}. Given canonical multidimensional Newton nodes $P_{m,n}$, and ordering the paths $\Gamma_{m,n}$ 
from left to right with respect to the leaf ordering of  $T_{m,n}$, we observe that 
for the multidimensional Newton nodes $p_{[\gamma]}$ corresponding to $\gamma$
there holds 
\begin{equation}
 N_{[\gamma]}(p_{[\gamma']})= 0 \quad \text{if}\,\,\, \gamma < \gamma'\,.
\end{equation}
Thus, like in the 1D case, the associated Vandermonde matrix $V_{m,n}(P_{m,n})$ is of lower triangular form, yielding an alternative proof of Proposition \ref{UniN} and a  possibility of computing the coefficients $c_{[\gamma]}$ in $\Oc(N(m,n)^2)$ steps.
Just like in the 1D case, a divided difference scheme can be formulated for computing the coefficients $c_{[\gamma]}$ even more efficiently.

\subsection{Alternative Formulation of the 1D Divided Differences Scheme}

\begin{figure}[t!]
\center
\input{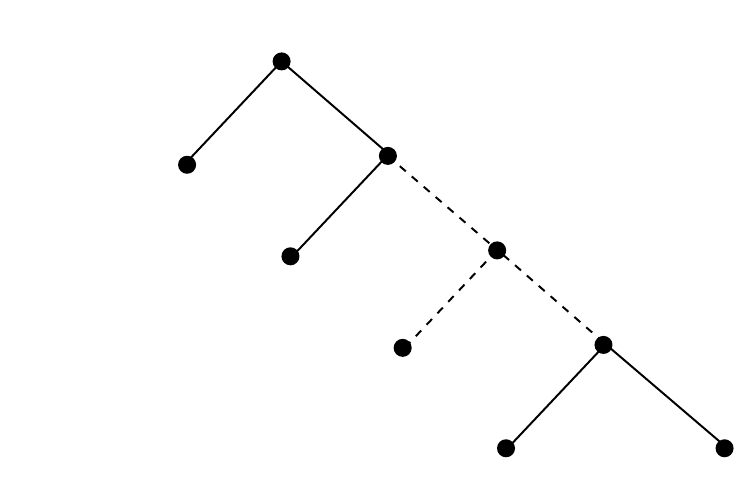_t}
\caption{The binary tree $T_{1,n}$ in the 1-dimensional case.}
\label{T1D}
\end{figure}

We consider the  case of dimension $m=1$. In this case, the tree $T_{1,n}$ has the special form illustrated in Figure \ref{T1D}. Choosing pairwise disjoint nodes $P_n=\{p_1,\dots,p_{n+1}\}\subseteq \R$ 
and enumerating the leaf paths $\gamma _i  \in \Gamma_{1,n}$ from left to right, we observe that
 $$N_{[\gamma_i]}=N_{i}(x)=\prod_{j=1}^i (x_j-p_j)  \qquad i =0,\dots,n\,,$$ 
 where the Newton polynomials $N_i$ were defined in Eq.~\eqref{NewtPoly}. Though it is trivial, the nodes $P_n$ can be seen as lying on the $0$-dimensional planes $H_i=\{x \in \R^1 \mi x= p_i\}=\{p_i\}$.
 While Theorem \ref{GN} is fulfilled anyhow in this case, this interpretation allows to recursively set up the statement of Theorem \ref{SV}. 
 That is, for a given computable function $f: \R\lo\R$ and $k=1,\dots,n$ we set:
\begin{align}
 f_0(x)&=f(x)\,,  & c_0&=f_0(p_1) \,, \nonumber \\
f_k(x)&=\frac{f_{k-1}(x)-f_{k-1}(p_{k})}{x-p_k} \,,  & c_k&=f_k(p_{k+1})\,. \label{DDS2}
\end{align}
Indeed the computation of the $c_k$ is similar to the divided difference scheme illustrated in Eq.~\eqref{DDS}. However, the two schemes are not identical.  
In our alternative formulation, the differences and divisors appear in a different order. Still, both schemes compute the same result, as 
can easily be verified by hand. 

\begin{corollary} Let $f : \R \lo \R$ be a computable function, and $T_{1,n}$, $\Gamma_{1,n}$ be given. Further let $P_n=\{p_1,\dots,p_{n+1}\} \subseteq \R$ be a set of pairwise disjoint nodes. 
Then, the polynomial 
$$ Q(x)=\sum_{k=0}^{n}c_kN_{[\gamma_k]}(x)$$
that solves the 1-dimensional PIP with respect to \mbox{$(n,f,P_n)$} can be determined in $\Oc(n^2)$ computation steps. 
\label{DDSC}
\end{corollary}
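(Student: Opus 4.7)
The plan is to derive the corollary as the one-dimensional instance of the recursive decomposition in Theorem \ref{SV}, and then to bound the arithmetic cost using an in-place update of the sampled function values.

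First, I would specialize Theorem \ref{SV} to $m=1$: the ``hyperplane'' $H = \{p_1\}$ is a single point, the degree-one polynomial $Q_H(x) = x - p_1$ satisfies $Q_H^{-1}(0) = H$ and is nonzero elsewhere, and the PIP on $H$ for any function $g$ is trivially solved by the constant $g(p_1)$. Applying Theorem \ref{SV} with $P_1 = \{p_1\}$, $P_2 = \{p_2, \dots, p_{n+1}\}$, and $Q_1(x) \equiv f(p_1) = c_0$ expresses the sought interpolant as $Q_{n,f}(x) = c_0 + (x - p_1)\widetilde{Q}(x)$, where $\widetilde{Q}\in \Pi_{1,n-1}$ solves the PIP for $f_1(x) = (f(x)-f(p_1))/(x-p_1)$ on $P_2$. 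The pairwise distinctness of the $p_i$ ensures $P_2 \cap H = \emptyset$, so $f_1$ is well-defined on $P_2$ and the splitting meets the hypotheses of Theorem \ref{GN}. Iterating this reduction $n$ times reproduces exactly the recursion \eqref{DDS2}, and the resulting nested-factor expansion
\[
 Q(x) = c_0 + (x-p_1)\bigl(c_1 + (x-p_2)(c_2 + \cdots )\bigr) = \sum_{k=0}^n c_k \prod_{j=1}^k (x - p_j) = \sum_{k=0}^n c_k N_{[\gamma_k]}(x)
\]
is then the unique interpolant, e.g.\ by Proposition \ref{UniN}.

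Next, for the cost bound, I would avoid carrying the intermediate functions $f_k$ symbolically and instead maintain only the vector of sampled values $v_j^{(k)} := f_k(p_j)$ for $j = k+1,\dots,n+1$. Initialization $v_j^{(0)} = f(p_j)$ uses $n+1$ function evaluations. At round $k$ one reads off $c_k = v_{k+1}^{(k)}$ and performs the in-place update
\[
 v_j^{(k+1)} = \frac{v_j^{(k)} - v_{k+1}^{(k)}}{p_j - p_{k+1}}, \qquad j = k+2,\dots,n+1,
\]
costing $\Oc(n-k)$ arithmetic operations. Summing over $k$ gives $\sum_{k=0}^{n-1}(n-k) = n(n+1)/2 \in \Oc(n^2)$, as required.

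The main obstacle is essentially bookkeeping: carefully matching the iterated application of Theorem \ref{SV} to the specific update rule \eqref{DDS2}, and verifying that pairwise distinctness of the nodes keeps every $f_k$ well-defined on the remaining nodes at every stage of the recursion. Once this correspondence is pinned down, correctness follows by induction on $n$ and the runtime bound is a one-line arithmetic sum.
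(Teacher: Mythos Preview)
Your proposal is correct and follows essentially the same route as the paper: induct on $n$ by specializing Theorem~\ref{SV} (with Theorem~\ref{GN}) to $m=1$, invoke Proposition~\ref{UniN} for uniqueness, and bound the cost by the same recursive accounting as in Theorem~\ref{mainT}. Your explicit in-place update for the runtime is a clean way to spell out what the paper leaves as ``analogously to the proof of Theorem~\ref{mainT}.''
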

\begin{proof} By induction on $n$, the statement follows directly from Proposition \ref{UniN} and Theorem \ref{SV}. The runtime 
complexity can be shown analogously to the proof of Theorem \ref{mainT}.
\qed\end{proof}

\subsection{The Multivariate Divided Differences Scheme} 
In the general case of dimension $m \in \N$, we introduce the following notions:

\begin{definition}[parallel paths] Let $m,n \in \N$, and $T_{m,n}$, $\Gamma_{m,n}$ be given. Let $\gamma \in \Gamma_{m,n}$  with descent vector $[\gamma]=(k_1,\dots,k_m)\in \N^m$. We consider 
$1\leq i\leq m$ with $k_i >1$. 
If the leaf $l_\gamma$ where the path encoded by $\gamma$ ends has degree $\deg(l_\gamma)>0$, then we define
\begin{align*}
 \Delta_i(\gamma):=
  \li\{\delta \in \Gamma_{m,n} \mi [\delta]=(l_1,\dots,l_m) \,\,\text{with} \,\,\begin{array}{ll}
                                                                                       1\leq l_i < k_i \,, &   \\ 
                                                                                       l_j = k_j\,, & \text{for} \,\, j\not = i 
                                                                                       \end{array}\,\,  \re\} .
                                                                                       \end{align*}
If the leaf degree $\deg(l_\gamma)=0$ then for  $i=1$ we consider 
$$  \Delta_1^0(\gamma):=
  \li\{\delta \in \Gamma_{m,n} \mi [\delta]=(l_1,k_2\dots,k_m) \,, \quad l_1 =1,\dots,k_1-1\re\}
  $$
and for $i >1$ we define $i_0=\min \li(\{j < i \mi k_j >1\} \cup \{1\}\re)$ and
\begin{align*}
 \Delta_i^0(\gamma):=
  \li\{\delta \in \Gamma_{m,n} \mi [\delta]=(l_1,\dots,l_m) \,\text{with} \,\begin{array}{ll}
										       l_{i} < k_{i}\,, & \\
                                                                                       l_j = k_j\,, & j\not = i,i_0 \\                                                                                      
                                                                                       l_{i_0} = k_{i}-l_i+k_{i_0}\,, &  
                                                                                       \end{array}\,\,  \re\}\,.
                                                                                       \end{align*}
Finally, we order the paths in $\Delta_i(\gamma),\Delta_i^0(\gamma)$ with respect to the  leaf ordering of $T_{m,n}$ from left to right.
\end{definition}

Indeed, the $\Delta_i(\gamma),\Delta^0_i(\gamma)$ correspond to all multidimensional Newton nodes that are required to compute the coefficient of $N_{[\gamma]}$. For instance, 
the leafs $l_\delta$ of $ \delta \in \Delta_{i}^0(\gamma)$, $i>1$, have degree $\deg(l_\delta)=0$ and are given by parallel translation of $l_\gamma$ along the multidimensional Newton grid $P_{m,n}$ in dimension $i$. 
More precisely, we define:

\begin{definition}[multivariate divided differences] Let $m,n \in \N$, \linebreak 
$f : \R^m \lo \R$ be a computable function, $P_{m,n}$ be a set of multidimensional Newton nodes, and $T_{m,n},\Gamma_{m,n}$ be given. 
For $\gamma \in \Gamma_{m,n}$ with $[\gamma]=(k_1,\dots,k_m)$,  
we consider $\Delta_i(\gamma),\Delta_i(\gamma)^0=\{\delta_{1,i},\dots,\delta_{k_i,i}\}$, and $m_\gamma \in \N$ such that  $J_{\gamma}=\{ j_1,\dots,j_{m_{\gamma}}\} \subseteq \{1,\dots,m\}$ contains all indices
with $k_{i} >1$ for $i \in J_{\gamma}$. By reordering if necessary we assume that $J_\gamma$ is ordered in reverse direction, i.e.,  $ j_{l} > j_{l'}$ if $l<l'$ for all
$l,l' \in \{1,\dots,m_{\gamma}\}$.
We denote by $p_{h,i}\in P_{m,n}$ the nodes corresponding to $\delta_{h,i}$ and define
\begin{align*}
F_{\gamma,1}^0 & =f(p_{j_1,1}) \,, & F_{\gamma,1}^k &:=\frac{F_{\gamma,1}^{k-1}(p_{j_1,k}) - F_{\gamma,1}^{k-1}(p_{j_1,k-1})}{(p_{j_1,k-1}-p_{j_1,k})_{j_1} }\,,  \,\,\,1\leq k\leq k_{j_1}\,, \\
F_{\gamma,i}^0 & :=F_{\gamma,i-1}^{k_{j_i}} \,, & F_{\gamma,l}^k &:=\frac{F_{\gamma,l}^{k-1}(p_{j_l,k}) - F_{\gamma,l}^{k-1}(p_{j_l,k-1})}{(p_{j_l,k-1}-p_{j_l,k})_{j_l} }\,,
 \begin{array}{l}
 1\leq k\leq k_{j_l}\,,\\
 1\leq l \leq m_\gamma\,,
 \end{array} 
\end{align*}
where $(p_{j_l,k-1}-p_{j_l,k})_{j_l}$ denotes the $j_l$-th coordinate/component of $(p_{j_l,k-1}-p_{j_l,k})$.
\label{MVDD} 
\end{definition}

This definition yields the alternative divided difference scheme given in Eq.~\eqref{DDS2} for the special case $m=1$.

\begin{theorem} Let $m,n \in \N$, $f : \R^m \lo \R$ be a computable function, $T_{m,n}$, $\Gamma_{m,n}$ be given, and $P_{m,n}\subseteq \R^m$  be a set of canonical multidimensional Newton nodes generated by $\PP_{m,n}$. 
Then, the polynomial 
$$ Q(x)=\sum_{\gamma \in \Gamma_{m,n}}c_{[\gamma]}N_{[\gamma]}(x)\,, \quad c_{[\gamma]}= F_{\gamma,m_\gamma}^{k_{j_{m_\gamma}}}\,, $$
with $[\gamma]=(k_1,\dots,k_{j_{m_\gamma}},\dots,k_{j_1},\dots,k_m)$ solves the PIP with respect to $(m,n,f,$ $P_{m,n})$ and can 
be computed in $\Oc(N(m,n)^2)$ operations.
\label{DDST} 
\end{theorem}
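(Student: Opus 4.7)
The plan is to argue by induction on $N(m,n)$, closely mirroring the recursive decomposition already used in Theorem \ref{mainT}. The base case $N(m,n)=1$ forces $m=0$ or $n=0$, so $P_{m,n}$ is a single point $p$, the unique leaf path $\gamma$ satisfies $m_\gamma=0$, and the scheme returns the constant $f(p)$, which is the correct interpolant.

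For the inductive step, split $T_{m,n}$ at its root into the left subtree $T_L\cong T_{m-1,n}$ and right subtree $T_R\cong T_{m,n-1}$. The leaves of $T_L$ index exactly those $\gamma$ with $k_m=1$, corresponding to $P_1=P_{m,n}\cap H$ where $H=\{x_m=p_{m,1}\}$; the leaves of $T_R$ index those $\gamma$ with $k_m\geq 2$, corresponding to $P_2=P_{m,n}\setminus H$. Definition \ref{NTN} shows that $P_1$ is a canonical multidimensional Newton node set for $(m-1,n)$ inside $H\cong\R^{m-1}$, and, by the parallel property \eqref{parallel}, a translate of $P_2$ is a canonical multidimensional Newton node set for $(m,n-1)$. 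The inductive hypothesis applied to $T_L$ yields a polynomial $Q_1\in\Pi_{m-1,n}$ solving the PIP on $H$ for $f$, and applied to $T_R$ it yields the PIP solution on $P_2$ for whatever function is effectively fed into $T_R$ by the global scheme. By Theorem \ref{SV}, it suffices to identify this effective data with $f_1:=(f-Q_1)/Q_H$, where $Q_H(x)=x_m-p_{m,1}$, so that $Q=Q_1+Q_HQ_2$ interpolates $f$ on all of $P_{m,n}$.

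The key identity to verify is therefore that for every leaf path $\gamma$ of $T_R$, the first stage ($i=1$) of Definition \ref{MVDD} transforms the raw data $\{f(p):p\in P_{m,n}\}$ into the values of $f_1$ at the canonical Newton nodes of the $(m,n-1)$-subproblem. Since $k_m\geq 2$ on $T_R$, the largest index in $J_\gamma$ is $j_1=m$, so the first stage performs a one-dimensional divided-difference recursion along $x_m$. The classical Newton identity
$$[p_{m,1},\ldots,p_{m,k}]f=[p_{m,2},\ldots,p_{m,k}]f_1,$$
applied pointwise in the remaining coordinates and combined with the parallel property (which guarantees $f(\pi_H(p))=Q_1(\pi_H(p))$ for every $p\in P_2$), shows that after $k_m-1$ steps along $x_m$ the residual data is precisely $f_1$ evaluated at the translated copies of the $(m,n-1)$-Newton grid. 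The stages $i=2,\ldots,m_\gamma$ of the scheme are then, by construction, the full multivariate divided-difference scheme applied to that subproblem with data $f_1$, and the induction closes.

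The main obstacle will be the index bookkeeping: one must check that the \emph{reverse} processing order $j_1>j_2>\cdots>j_{m_\gamma}$ in Definition \ref{MVDD}, together with the inter-stage handoff $F^0_{\gamma,i}$ and the enumerations $\Delta_i(\gamma),\Delta_i^0(\gamma)$, is precisely what the outer-to-inner recursion demands when one always peels off the largest active coordinate direction first. Once this combinatorial matching is verified, the runtime estimate follows exactly as in Theorem \ref{mainT}: each node of $T_{m,n}$ contributes an $\Oc(1)$-cost divided-difference step, the combining cost across the split along $H$ is bounded by $\Oc(N(m-1,n)\cdot N(m,n-1))$, and the recurrence $T(m,n)\leq T(m-1,n)+T(m,n-1)+\Oc(N(m-1,n)N(m,n-1))$ together with $N(m,n)=N(m-1,n)+N(m,n-1)$ resolves to $T(m,n)\in\Oc(N(m,n)^2)$.
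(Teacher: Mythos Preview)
Your strategy is exactly the paper's: induction along the root split of $T_{m,n}$ into $T_{m-1,n}$ and $T_{m,n-1}$, using Theorem~\ref{SV} together with the parallel property \eqref{parallel} to show that the divided-difference scheme on $P_2$ effectively sees the data $f_1=(f-Q_1)/Q_H$. The runtime argument is also identical to that of Theorem~\ref{mainT}.

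There is, however, a concrete bookkeeping slip in your description of the key identity. You write that ``the first stage ($i=1$)'' of Definition~\ref{MVDD}, i.e.\ all $k_m-1$ divided-difference steps in direction $x_m$, transforms the raw $f$-data into $f_1$-values, and that stages $i\geq 2$ then constitute the full $(m,n-1)$-scheme with input $f_1$. This is not how the two schemes align. Passing from $f$ to $f_1$ is a \emph{single} divided-difference step in direction $x_m$: the paper makes this explicit by computing $f_1(p)=\big(f(p)-f(\pi_H(p))\big)/(p_m-p_{m,1})$ for $p\in P_2$. The remaining $k_m-2$ steps in direction $x_m$ of the $(m,n)$-scheme are precisely stage $i=1$ of the $(m,n-1)$-scheme applied to $f_1$ (since for the corresponding leaf $\gamma'$ in $T_R$ one has $k_m'=k_m-1\geq 1$, and direction $m$ is still the largest active one). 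The identity you quote, $[p_{m,1},\dots,p_{m,k}]f=[p_{m,2},\dots,p_{m,k}]f_1$, expresses exactly this one-step shift, not an exhaustion of the $x_m$-direction. Once you correct the alignment to ``first \emph{step} of stage $i=1$ produces $f_1$; all subsequent steps (including the rest of stage $i=1$) reproduce the $(m,n-1)$-scheme on $f_1$'', the induction closes as you intend and matches the paper verbatim.
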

\begin{proof} We argue by induction on \mbox{$(m,n)$}. For $m=1, n\in \N$, the statement is already proven in Corollary \ref{DDSC}. Thus, we proceed by induction on $m$. 
If $m>1$, then 
% in regard of Lemma \ref{TT} and Definition \ref{NTN},  w.l.o.g., we can
% assume  that the affine transformation $\tau$ in Definition \ref{NTN} is the identity, i.e., $\tau = \id_{\R^m}$. We
we consider $\Gamma_1=\{\gamma_1,\dots,\gamma_{N(m-1,n)} \}\subseteq  \Gamma_{m,n}$ with 
$[\gamma_h]=(k_1,\dots,k_m)\in \N^{m} $ and 
$k_{m}=1$, $h=1,\dots,N(m-1,n)$. Furthermore, we denote by $P_1\subseteq P_{m,n}$ the corresponding nodes and $\Gamma_2 = \Gamma_{m,n}\setminus \Gamma_1$, $P_2 = P_{m,n}\setminus P_1$. In addition, we consider the sub-trees
$T_1\cong T_{m-1,n}$, $T_2\cong T_{m,n-1}$ spanned by $\Gamma_1$, $\Gamma_2$, respectively.
Note that the $m$-th coordinate is constant, i.e., for $p,p'\in P_1$ we have $p_m=p_m'=p_{m,1} \in \PP_{m,n}$. Since the hyperplane 
$H= \li\{x \in \R^m \mi x_m =p_{m,1}\re\}$  can be identified with $\R^{m-1}$ by shifting the last coordinate to $0$, induction yields that 
$$c_{[\gamma_1]} = F_{\gamma_1,m_{\gamma_1}}^{k_{j_{m_{\gamma_1}}}},\dots,c_{[\gamma_{N(m-1,n)}]}= F_{\gamma_{N(m-1,n)},m_{\gamma_{N(m-1,n)}}}^{k_{j_{m_{\gamma_{N(m-1,n)}}}}}\,.$$
% where $k_{j_1,h}$ is understood w.r.t.~$\gamma_h$. 
These are the uniquely determined coefficients solving the 
PIP with respect to \mbox{$(m-1,n,f_{| H},P_1)$} on $H$. We further follow Theorem \ref{SV} and set $f_1(x)=(f(x)-Q_1(x))/(x_m-p_{m,1})$ with $Q_1(x) = \sum_{\gamma \in \Gamma_1}c_{[\gamma]}N_{[\gamma]}(x)$.
The defining property of multidimensional Newton nodes is their parallelism, i.e., 
$\pi_H(P_{2}) \subseteq P_1$, where $\pi_H$ denotes the orthogonal projection onto $H$. Furthermore, we observe that $Q_1(x)$ is constant in direction perpendicular to $H$. Therefore, 
$$ Q_1(p) = Q_1(\pi_H(p))=f(\pi_H(p)) \quad \text{for all}\quad p \in P_2\,.$$ 
Thus, 
\begin{align*}
 f_1(p) &= (f(p)-f(\pi_H(p)))/(\pi_H(p)_m - p_{m,1})\\
 &=(f(p)-f(p_1,\dots, p_{m-1},p_{m,1}))/(p_m - p_{m,1})\,.
\end{align*}
Following Definition \ref{MVDD}, we observe that  
$
d_{[\delta_1]} = F_{\delta_1,K_{\delta_1}}^{k_{j_{m_{\delta_1}}}},\dots,d_{[\delta_{N(m-1,n)}]}$ 
\linebreak 
$= F_{\delta_{N(m,n-1)},K_{\delta_{N(m,n-1)}}}^{k_{j_{m_{\delta_{N(m,n-1)}}}}}  
$
yielding 
\begin{equation}
 d_{[\delta_1]}=c_{[\gamma_{N(m-1,n)+1}]}\,\dots,d_{[\delta_{N(m-1,n)}]}= c_{[\gamma_{N(m,n)}]}\label{ce}
\end{equation}
Hence, by induction, the $d_{[\delta]}$, $\delta\in \Gamma_{m,n-1}$, are the uniquely determined coefficients solving the 
PIP w.r.t.~\mbox{$(m,n-1,f_{1},P_2)$} on $\R^m$. Moreover, due to Eq.~\eqref{ce}, we have: 
$$Q_2(x) = \sum_{\delta\in T_{m-1,n}}d_{[\delta]}N_{[\delta]}(x) = \frac{1}{(x_m-p_{m,1})}\sum_{\gamma\in \Gamma_{2}}c_{[\gamma]}N_{[\gamma]}(x)\,.$$
Now Theorem \ref{SV} yields that $$ Q(x)= Q_1(x)+Q_H(x)Q_2(x)= \sum_{\gamma \in \Gamma_{m,n}}c_{[\gamma]}N_{[\gamma]}(x)\,, \quad   Q_H(x) = x_m-p_{m,1}, $$ solves the PIP w.r.t.~\mbox{$(m,n,f,P_{m,n})$}, which proves the
statement. The runtime complexity follows from the proof of Theorem \ref{mainT}.
\qed\end{proof}

By considering $f'=f \circ \tau^{-1}$ with $\tau$ from Definition \ref{NTN}, Theorem \ref{DDST} also extends to the case of non-canonical multidimensional Newton nodes, i.e., to affine transformations of canonical multidimensional Newton nodes. 
Together, Theorems \ref{SV} and \ref{DDST} and Proposition \ref{UniN}  
prove Main Result I as stated in Theorem \ref{I} in Section \ref{main}.
We next characterize some basic properties of multivariate Newton polynomials and how they can be exploited for basic computations.

\subsection{Properties of Multivariate Newton Polynomials} Analogously to its 1D version, the multivariate divided difference scheme provides an efficient and numerical robust method for  interpolating functions by polynomials.
We generalize some classical facts of the 1D case, yielding Main Result \ref{II}, here restated with a bit more detail as: 
  \begin{theorem}[Main Result II]Let $m,n \in \N$, $T_{m,n}$, $\Gamma_{m,n}$, and a  set of canonical multidimensional Newton nodes $P_{m,n}$ be given. Let further $Q \in \Pi_{m,n}$, 
  $Q(x)= \sum_{\gamma \in \Gamma_{m,n}}c_{[\gamma]}N_{[\gamma]}(x)$ be a polynomial given in multivariate Newton form w.r.t.~$T_{m,n}$, $P_{m,n}$. Then, there exist algorithms with runtime complexity in $\Oc(N(m,n))$ that compute: 
  \begin{enumerate}
   \item[i)] The value of $Q(x_0)$  for any $x_0 \in \R^m$  in 
  $\Oc(N(m,n))$;   
   \item[ii)] The partial derivative  $\partial_{x_i} Q|_{x_0}$ for any $i \in \{1,\dots,m\}$ and $x_0 \in \R^m$ in 
  $\Oc(nN(m,n))$; 
     \item[iii)]  The integral $\int_{\Omega}Q(x)\mathrm{d}x$ for any hypercube $\Omega \subseteq \R^m$ with runtime complexity in 
  $\Oc(nN(m,n))$.
   \label{M2}
  \end{enumerate}
  \label{EVN}
\end{theorem}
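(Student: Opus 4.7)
All three parts follow the PIP-SOLVER's guiding principle: recursively decompose $Q = Q_1 + Q_H Q_2$ along the PIP-tree $T_{m,n}$, with $Q_H$ the linear factor from Theorem~\ref{SV}, and perform the requested operation bottom-up. Since $T_{m,n}$ is a binary tree with $N(m,n)$ leaves and hence $\Oc(N(m,n))$ nodes in total, any scheme doing $\Oc(1)$ work per node runs in $\Oc(N(m,n))$, and allowing an extra factor of $n$ per node still fits into $\Oc(nN(m,n))$.

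For (i), I evaluate $Q(x_0)$ as a multivariate Horner-like recursion on $T_{m,n}$: each leaf returns its stored coefficient $c_{[\gamma]}$, and each internal node of label $(\mu, \nu)$ merges its children via $v \leftarrow v_1 + (x_{0,\mu} - p_{\mu,1})\,v_2$ in $\Oc(1)$ time, yielding $\Oc(N(m,n))$ total. For (ii), differentiate the same decomposition: $\partial_{x_i} Q = \partial_{x_i} Q_1 + (\partial_{x_i} Q_H)\,Q_2 + Q_H\,\partial_{x_i} Q_2$. Since $Q_1$ lives on the hyperplane $\{x_\mu = p_{\mu,1}\}$ and is therefore independent of $x_\mu$, the middle term is nontrivial only at nodes whose splitting coordinate $\mu$ equals $i$, in which case one extra evaluation $Q_2(x_0)$ must be added at cost $\Oc(N(\mu, \nu-1))$ by part~(i). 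Counting the nodes of label $(i, \nu)$ in $T_{m,n}$ as $\binom{(m-i)+(n-\nu)}{m-i}$ and summing the extra cost over them gives, by a Chu--Vandermonde-type convolution, $\sum_{\nu=1}^n \binom{m-i+n-\nu}{m-i}\,N(i,\nu-1) = \binom{m+n}{m+1} \leq n\,N(m,n)$. Hence the total is $\Oc(nN(m,n))$.

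For (iii), the factorization $N_{[\gamma]}(x) = \prod_{j=1}^m \prod_{l=1}^{k_j-1}(x_j - p_{j,l})$ together with Fubini's theorem on $\Omega = \prod_{j=1}^m [a_j, b_j]$ yields $\int_\Omega N_{[\gamma]}(x)\,dx = \prod_{j=1}^m I_j(k_j - 1)$ with $I_j(k) := \int_{a_j}^{b_j} \prod_{l=1}^k (x_j - p_{j,l})\,dx_j$. I precompute the table $\{I_j(k)\}_{j\leq m,\,k\leq n}$ by incrementally expanding and integrating the one-dimensional Newton polynomials in $\Oc(mn^2)$ time, then combine the factors across leaves via a DP on $T_{m,n}$ that maintains the running product along each root-to-leaf path in $\Oc(N(m,n))$ amortized work. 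Together with the elementary bound $mn \leq N(m,n)$ for $m,n \geq 1$, the total $\Oc(mn^2 + N(m,n))$ is absorbed into $\Oc(nN(m,n))$. The main obstacle is the careful accounting in (ii): the extra $Q_2$-evaluations incurred when the splitting coordinate matches $i$ must be summed via the convolution identity $\sum_{\nu} \binom{m-i+n-\nu}{m-i}\binom{i+\nu-1}{i} = \binom{m+n}{m+1}$ to land exactly at the claimed $\Oc(nN(m,n))$ bound rather than a cruder $\Oc(n \cdot N(m,n))$-per-level overestimate.
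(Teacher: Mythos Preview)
Your argument is correct. For parts (i) and (ii) you are doing exactly what the paper does---the same recursive splitting $Q=Q_1+(x_\mu-p)\,Q_2$ along $T_{m,n}$, combined at each internal node in $\Oc(1)$---only you unroll the recursion and count the extra $Q_2$-evaluations in (ii) explicitly via Chu--Vandermonde, whereas the paper simply writes the induction step $\Oc(nN(m-1,n))+\Oc(nN(m,n-1))=\Oc(nN(m,n))$ without tracking where the factor $n$ enters. Both reach the same bound; your accounting is more transparent about \emph{why} the $n$ appears (it is the hockey-stick sum $\sum_\nu N(i,\nu-1)$ along the $\mu=i$ stratum).

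Your part (iii), however, takes a genuinely different route. The paper integrates the decomposition $Q=Q_1+(x_m-p_{m,1})Q_3$ by parts in the $x_m$-variable and recurses, reducing to lower-degree integrals and boundary evaluations. You instead exploit directly that for \emph{canonical} nodes each $N_{[\gamma]}$ is a pure tensor, so Fubini gives $\int_\Omega N_{[\gamma]}=\prod_j I_j(k_j-1)$ with one-dimensional integrals $I_j(k)$ that you tabulate once in $\Oc(mn^2)$ and then assemble along the tree in $\Oc(N(m,n))$. This is cleaner: it avoids the integration-by-parts bookkeeping (which in the paper's displayed formula is somewhat sketchy) and makes the cost estimate immediate via $mn\le N(m,n)$. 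The paper's route, on the other hand, stays entirely within the recursive PIP machinery and would adapt more readily to situations where the basis does not factor as a tensor over coordinates; since the theorem already assumes canonical nodes and an axis-aligned hypercube $\Omega$, your Fubini argument loses nothing here.
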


  \begin{proof} We argue by induction on $m,n \in \N$. For $m=1, n\in \N$ all statements are known to be true \cite{Stoer,gautschi,atkinson,endre}.  If $m>1$ then, by assumption, the affine transformation 
   $\tau : \R^m \lo \R^m$ from Definition \ref{NTN} is the identity, i.e., $\tau=\id_{\R^m}$.  Let $\PP_{m,n}:=\oplus_{i=1}^m \PP_i$, $\PP_i=\{p_{i,1},\dots,p_{i,n+1}\}$ be the generating nodes of $P_{m,n}$, see  Definition \ref{NTN}.
   We consider the hyperplane 
  $H=\li\{x \in \R^m \mi x_m = p_{m,1}\re\}$ and, for $\Gamma_1=\{\gamma\in \Gamma_{m,n}\mi [\gamma]_{m}=1\}$, we denote by $P_1 \subseteq P_{m,n}$ the multidimensional Newton nodes corresponding to the leaves of $\gamma \in \Gamma_1$. 
  Further, we set $Q_1(x)= \sum_{\gamma \in \Gamma_1}c_{[\gamma]}N_{[\gamma]}(x)$. By 
  identifying $H$ with $\R^{m-1}$ induction yields that $(i),(ii),(iii)$ can be computed in $\Oc(N(m-1,n))$. Setting $Q_2(x) = Q(x)-Q_1(x)$, 
  we have $Q_2(x) = (x_m-p_{m,1})Q_3(x)$, where $Q_3 \in \Pi_{m,n-1}$ is
  a polynomial in Newton form w.r.t.~$T_{m,n-1}$ and $P_2=P_{m,n}\setminus P_1$ (see Theorem~\ref{DDST}). Thus, by induction, $(i),(ii)$ can be computed in $\Oc(N(m,n-1))$, $\Oc(nN(m,n-1))$ for $Q_3$, which proves $(i),(ii)$ 
  because $N(m-1,n)+N(m,n-1)=N(m,n)$. Claim $(iii)$ follows 
  from $(i)$ and $(ii)$ by applying partial integration to $Q(x) = Q_1(x) + (x_m-p_{m,1})Q_3(x)$, i.e., 
  \begin{align*}
   \int_{\Omega} Q(x)dx  & =\int_{\Omega} Q_1(x)dx -\int_{\Omega} Q_3(x)dx  \\
   & + \int_{\Omega \cap H_{1,\dots,m-1}}(x_m-p_{m,1})\partial_{x_m} Q_3(x)\big|_{x_m=\pm 1}dx 
  \end{align*}

  and using induction.
  \qed\end{proof}

The recursive subdivision of the problem into sub-problems of lower dimension or degree, as also used in the proof of Theorem \ref{EVN}, can be used to implement a generalization of the classical 
\emph{(inverse) Horner scheme} \cite{Stoer,gautschi,atkinson,endre}. In the case of arbitrary 
multidimensional Newton nodes $P_{m,n}$ with $\tau(P_{m,n}) = \bar P_{m,n}$ for canonical nodes $\bar P_{m,n}$, we can evaluate $Q\circ \tau^{-1}(x)$ with respect to $\bar P_{m,n}$.  
The runtime complexity then increases by adding the cost of inverting the affine transformation $\tau$ from Definition \ref{NTN}, which is given by the cost of inverting an $m \times m$ matrix.

  \section{Approximation Theory}\label{APT} 
Studying how well arbitrary continuous functions can be approximated by polynomials, a fundamental observation was made. The observation is that even though the Weierstrass Theorem states that every function $f : I\subseteq \R \lo \R$, $I=[a,b]$ can be approximated by 
  Bernstein polynomials  in the $C^0$-sense~\cite{weier}, for every choice of interpolation nodes $P_n$ there exists a continuous function for which the interpolant $Q_{n,f}$ does not converge, i.e., $\nol f - Q_{n,f}\nor_{C^0(\Omega)}  \centernot\lo 0$  for $n \rightarrow \infty$~\cite{faber}. This observation is known as {\em Runge's phenomenon}. 
  However, when choosing \emph{Chebyshev nodes} the class of functions that cannot be approximated (i.e., for which Runge's phenomenon occurs) seem to be pathological and extremely unlikely to occur in any ``real world'' application or data set.
We here specify these facts and provide generalized statements for the multidimensional case.

 \subsection{Sobolev Theory}   
We start by providing the analytical setting required to formulate our results.
 In \cite{Sob} an excellent overview of Sobolev theory is given. Here, we simply recall that for every domain $\Omega \subseteq \R^m$ the functional space $C^k(\Omega,\R)$ denotes the real vector space of all functions $f : \Omega \lo \R$ that are $k$ times differentiable in the interior of $\Omega$ 
 and are continuous on the closure $\overline \Omega = \Omega \cup \partial \Omega$. 
 Further, by equipping $C^k(\Omega,\R)$  with the norm 
  $$ \nol f\nor_{C^k(\Omega)} = \sum_{l=0}^k\sup_{x \in \Omega}\sum_{\alpha \in A_{m,l}}\li|\partial^\alpha f(x)\re|\,, \quad \partial^\alpha f(x) = 
\partial^{\alpha_1}_{x_1}\cdots \partial^{\alpha_n}_{x_n}f(x), $$ 
  we obtain a Banach space. We consider the Sobolev space of $L^p$--functions with well-defined weak derivatives up to order $k$, i.e., in general for 
  $1\leq p < \infty$
  \begin{equation}\label{Wkp}
W^{k,p}=\Big\{ f \in L^p(\Omega,\R) \mi \nol f \nor_{W^{k,p}(\Omega)}^p:=\sum_{\alpha\in A_{m,k}}\nol \partial^\alpha f\nor_{L^p(\Omega)}^p < \infty \Big\}\,.
  \end{equation}
The \emph{Hilbert space} $H^k(\Omega,\R):=W^{k,2}(\Omega,\R)$ is of special interest.
In the following, we assume that $\Omega = [-1,1]^m$ is the standard hypercube and denote by $\T^m = \R^m / 2\Z^m$ the torus with fundamental domain $\Omega$.
Then, $H^k(\Omega,\R) \subseteq L^2(\Omega,\R)$, $0 \leq k \leq \infty$, implies that every $f \in H^k(\Omega,\R)$ can be written as a power series 
\begin{equation*} 
  f(x)=   \frac{1}{|\Omega|}\sum_{\alpha \in \N^m} d_\alpha x^\alpha\,, \quad d_\alpha \in \R
\end{equation*}
almost everywhere,  i.e., the identity is violated only on a set $\Omega'\subseteq \Omega$ of Lebesgue measure zero.
Further, for every function  $\widetilde f : \T^m \lo \R$, there is a uniquely determined function $f : \Omega \lo \R$  such that  $\widetilde f(x + 2\Z^m)=f(x)$. Therefore, $f$ can obviously be extended to a periodic function on $\R^m$ and is called the {\em periodic representative} of $\widetilde f$.
Thus, we can write $f$ as a Fourier series 
\begin{equation*}
 f(x)= \frac{1}{|\Omega|}\sum_{\alpha \in \N^m} c_\alpha e^{\pi i \li<\alpha, x\re> }\,, \quad c_\alpha \in \C 
\end{equation*}
and observe that  the space $H^k(\T^m,\R)$ can be defined as
\begin{equation}
  H^k(\T^m,\R)=\overline{C^{\infty}(\T^m,\R)}^{\nol \cdot\nor_{H^k(\Omega)}} ,
% The overbar is the set closure again
\label{Compl}
\end{equation}
which is the completion of $C^{\infty}(\T^m,\R)$ with respect to the $H^k$-norm \cite{Sob}. Therefore, 
\begin{align}\label{frac}
 \nol f \nor_{H^k(\Omega)}^2:= &\sum_{\beta\in A_{k,m}}\li<\partial^\beta f,\partial^\beta f \re>_{L^2(\Omega)} \nonumber \\
 = & \sum_{\beta \in A_{m,k}, \alpha \in \N^m}\big(\pi^{\|\beta\|}\alpha^\beta|c_{\alpha}|\big)^2 \,, 
\end{align}
Thus, $C^{\infty}(\T^m,\R)\subseteq H^k(\T^m,\R)$ is a dense subset and, due to the right-hand side of Eq.~\eqref{frac}, a notion of fractal derivatives can be given, i.e., $H^k(\Omega,\R)$ with $k \in \R^+$ and $\alpha \in \R^m$, $\alpha_i \geq 1$, $|\alpha|\leq k$, can be considered.  
Vice versa, due to the\emph{ Sobolev and Rellich-Kondrachov embedding Theorem} \cite{Sob}, we have that whenever 
$k > m/2$ then $H^k(\T^m,\R) \subseteq C^0(\T^m,\R)$ 
and the embedding 
\begin{equation*} 
i : H^k(\T^m,\R) \hookrightarrow C^0(\T^m,\R) 
\end{equation*}
is well defined, continuous, and compact. Thus, for $m \in \N$ and all periodic representatives $f$ of $ \widetilde f\in \C^0(\T^m,\R)$, there exists a constant $c=c(m,\Omega) \in \R^+$ such that  
\begin{equation}\label{norm}
 \nol f \nor_{C^0(\Omega)} \leq  c\,\nol f \nor_{H^k(\Omega)}
\end{equation}
and for every $B \subseteq H^k(\T^m,\R)$ we have that $i(B) \subseteq C^0(\T^m,\R)$
is precompact whenever $B$ is bounded in $H^k(\T^m,\R)$. By the trace Theorem \cite{Sob}, we observe furthermore that whenever $H \subseteq \R^m$ is a hyperplane of co-dimension $1$, then the induced restriction 
\begin{equation}\label{trace}
  \varrho: H^k(\Omega,\R) \lo H^{k-1/2}(\Omega \cap H,\R) 
\end{equation}
is continuous, i.e., $\nol f_{| \Omega \cap H} \nor_{H^{k-1/2}(\Omega\cap H)} \leq d\nol f \nor_{H^k(\Omega)}$  for some $d=d(m,\Omega) \in \R^+$. 
We consider 
$$\Theta_{m,n}= \Big\{\widetilde f\in H^k(\T^m,\R)\mi  f(x) = \frac{1}{|\Omega|}\sum_{\alpha \in A_{m,n}} c_\alpha e^{\pi i  \li<\alpha, x\re> }\,, c_{\alpha} \in \C\Big\}$$ 
the space of all finite Fourier series of bounded frequencies and denote by 
\begin{align}
 \theta_n : H^k(\T^m,\R)\lo & \Theta_{m,n} \subseteq C^0(\T^m,\R)\,,  \nonumber\\
 \tau_n : H^k(\Omega,\R)\lo & \Pi_{m,n}  \subseteq C^0(\Omega,\R)\label{pro}
\end{align}
the corresponding projections onto $\Theta_{m,n}$ or onto the space of polynomials  $\Pi_{m,n}$. Further, we denote by $\theta_n^\perp = I -\theta_n$, $\tau_n^\perp=I-\tau_n$ the complementary projections. Then, we have:

\begin{lemma} Let $k,m \in \N$, $\Omega =[-1,1]^m \subseteq \R^m$ be the standard hypercube, $\widetilde f \in H^{k}(\T^m,\R)$, and $f$ its periodic representative. Then:  
\begin{enumerate} 
\item[i)]  $\nol f \nor_{C^0(\Omega)} \leq \nol f \nor_{H^k(\Omega)}$ for all $\widetilde f \in H^k(\T^m,\R)$.
 \item[ii)]  $\nol \theta_n^\perp(f)\nor_{C^0(\Omega)} \in o\big((m/n)^{k}\big)$, i.e. 
 $$  (n/m)^{k} \nol \tau_n^\perp(f)\nor_{C^0(\Omega)} \xrightarrow[n\rightarrow \infty]{} 0\quad \text{for every}\,\, m \in \N\,.$$
 \item[iii)] For $n,l \in \N$, the operator norm of  $\tau_{n}^\perp \theta_l: H^k(\T^m,\R) \lo C^0(\T^m,\R)$ is bounded by $\nol \tau_{n}^\perp \theta_l \nor \leq N(m,l)^{1/2}\frac{(\pi l^2)^{n+1}}{(n+1)!}$.
\end{enumerate}
\label{shrink}
 \end{lemma}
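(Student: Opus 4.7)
The plan is to prove the three parts in order using the Fourier-series representation on the torus and Eq.~\eqref{frac} to turn $H^k$-control into control on the coefficients $c_\alpha$.

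For $(i)$, I would start from $f(x)=\frac{1}{|\Omega|}\sum_{\alpha}c_\alpha e^{\pi i\langle \alpha,x\rangle}$ and bound $|f(x)|\le\frac{1}{|\Omega|}\sum_\alpha|c_\alpha|$ pointwise. Splitting each $|c_\alpha|$ against a weight $w_\alpha$ chosen so that $\sum_\alpha w_\alpha^2|c_\alpha|^2$ matches (up to normalisation) the right side of Eq.~\eqref{frac}, Cauchy--Schwarz yields $\|f\|_{C^0}\le C\|f\|_{H^k}$; the residual factor $\sum_\alpha w_\alpha^{-2}$ converges exactly because $k>m/2$, and careful accounting of the $1/|\Omega|$ normalisation together with the $\pi$-factors already present in Eq.~\eqref{frac} absorbs $C$ into the bare inequality. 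For $(ii)$, I would apply $(i)$ to the tail $\theta_n^\perp f$, so that $\|\theta_n^\perp f\|_{C^0}\le\|\theta_n^\perp f\|_{H^k}$. The Fourier mass of $\theta_n^\perp f$ lies on $\{|\alpha|>n\}$, so at least one coordinate $\alpha_i$ must exceed $n/m$; extracting this factor from the $H^k$-weights in the sum $\sum_{|\alpha|>n}\sum_\beta(\pi^{|\beta|}\alpha^\beta|c_\alpha|)^2$ produces the rate $(m/n)^k$, and the sharpening $\Oc\to o$ comes for free from the fact that the tail of the absolutely convergent series $\|f\|_{H^k}^2$ vanishes as $n\to\infty$.

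For $(iii)$, I would truncate each mode in $\theta_l f=\frac{1}{|\Omega|}\sum_{|\alpha|\le l}c_\alpha e^{\pi i\langle\alpha,x\rangle}$ at its Taylor polynomial of degree $n$ about the origin. On $\Omega=[-1,1]^m$, Taylor's remainder combined with $|\partial^\beta e^{\pi i\langle\alpha,x\rangle}|=\pi^{|\beta|}\alpha^\beta$ and the multinomial identity $\sum_{|\beta|=n+1}\alpha^\beta/\beta!=|\alpha|^{n+1}/(n+1)!$ delivers a per-mode $C^0$-error bounded in terms of $(\pi l)^{n+1}/(n+1)!$. Summing and applying Cauchy--Schwarz across the $N(m,l)$ indices converts $\sum_{|\alpha|\le l}|c_\alpha|$ into $N(m,l)^{1/2}\bigl(\sum|c_\alpha|^2\bigr)^{1/2}$, and Parseval identifies the latter with $|\Omega|^{1/2}\|\theta_l f\|_{L^2}\le|\Omega|^{1/2}\|f\|_{H^k}$.

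The main obstacle is squeezing out the precise constant $(\pi l^2)^{n+1}$ claimed in $(iii)$, rather than the naive $(\pi l)^{n+1}$ produced by a single Taylor expansion of an individual mode. Closing that gap requires combining the mode-wise Taylor estimate with a Bernstein-type derivative bound for the whole trigonometric polynomial $\theta_l f$ of degree $l$, whose extra factor $l^{n+1}$ compounds with the bound $\alpha^\beta\le l^{|\beta|}$ to produce $l^{2(n+1)}$; once this compounding is isolated, the Cauchy--Schwarz count over the $N(m,l)$ multi-indices cleanly contributes the remaining $N(m,l)^{1/2}$ and the statement drops out.
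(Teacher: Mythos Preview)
Your plan for $(ii)$ is exactly the paper's argument, and for $(i)$ you spell out the standard weighted Cauchy--Schwarz Sobolev-embedding argument, whereas the paper takes the shortcut of bounding $\bigl|\sum_\alpha c_\alpha e^{i\pi\langle\alpha,x\rangle}\bigr|^2$ directly by $\sum_\alpha|c_\alpha|^2$ for finite Fourier sums and passing to the limit. One caveat: your constant is $C=\bigl(\sum_\alpha w_\alpha^{-2}\bigr)^{1/2}$, and since the $\alpha=0$ term already contributes $1$, this sum is strictly larger than $1$; the ``absorption into the bare inequality'' you promise does not quite occur. For every later use in the paper only a finite constant is needed, so this is cosmetic.

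Your perceived obstacle in $(iii)$ is illusory. The per-mode bound $(\pi l)^{n+1}/(n+1)!$ that your multivariate Taylor remainder plus the multinomial identity $\sum_{|\beta|=n+1}\alpha^\beta/\beta!=|\alpha|^{n+1}/(n+1)!$ delivers is \emph{smaller} than the stated $(\pi l^2)^{n+1}/(n+1)!$ for every $l\ge 1$, so after the Cauchy--Schwarz step you have already proved a strictly stronger inequality which immediately implies the claim; no Bernstein-type argument is needed. The paper's extra factor $l^{n+1}$ arises because it expands each mode by the one-variable Taylor formula in the scalar $\langle\alpha,x\rangle$ and writes the Lagrange remainder with an explicit directional derivative $\partial_v^{\,n+1}e^{i\pi\langle\alpha,\xi\rangle}$; this produces two separate factors $|\langle\alpha,v\rangle|^{n+1}$ and $|\langle\alpha,x\rangle|^{n+1}$, each bounded by $|\alpha|^{n+1}\le l^{n+1}$. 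Your multivariate expansion avoids that redundancy. Simply drop the final paragraph of your proposal and note $(\pi l)^{n+1}\le(\pi l^2)^{n+1}$.
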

 
\begin{proof} To show $(i)$, we approximate $f$ by a finite Fourier series, i.e., we assume that  $f(x) = \frac{1}{|\Omega|}\sum_{\alpha \in A_{m,n}}c_\alpha e^{\pi i  \li<\alpha,x\re>}$. 
The Fourier basis is orthonormal, i.e., 
$$\frac{1}{|\Omega|}\int_\Omega  e^{\pi i  \li<\alpha,x\re>}\cdot e^{-\pi i  \li<\beta,x\re>} \mathrm{d}\Omega =  \li\{ \begin{array}{ll}
                                                                                                           1 \,, & \text{if} \,\,\, \alpha =\beta\\
                                                                                                           0\, & \text{else.}
                                                                                                          \end{array}\re.
$$
Thus,  we compute
\begin{align*}
 \nol f \nor^2_{C^0(\Omega)} =&\sup_{x \in \Omega}\Big|\sum_{\alpha \in A_{m,n}}c_\alpha e^{\pi i  \li<\alpha,x\re>}\Big|^2 \leq \sum_{\alpha \in A_{m,n}}|c_\alpha|^2 
  =\nol f \nor^2_{L^2(\Omega)}  \leq    \nol f \nor_{H^k(\Omega)}^2 \,.
\end{align*}
Now $(i)$ follows from the density of the approximation given in Eq.~\eqref{Compl} and the continuity of the norm $\nol \cdot \nor_{H^k(\Omega)}$. 
To show $(ii)$, let $\alpha \in \N^m$ 
and observe that 
$$\partial^\beta(c_\alpha e^{\pi i \li<\alpha,x\re>})=(i\pi)^{\|\beta||}\alpha^{\beta} e^{\pi i \li<\alpha,x\re>} \,, \,\,\,\beta\in A_{m,k}\,.$$
If $|\alpha|>m $ then at least one $\alpha_{i} > n/m$  for some $1 \leq i \leq m$. Thus, by choosing $\beta \in A_{m,k}$ with $\beta_{i}=k$, we obtain  
$$\big|\partial^\beta(c_\alpha e^{\pi i \li<\alpha,x\re>} )\big| \geq (\pi n/m)^{k}|c_\alpha|\,, \quad \text{for} \,\,\,|\alpha|\geq m\,.$$
Hence, for $n >m$ 
\begin{align*} 
(n/m)^{2k} ||\theta_n^\perp f||_{C^0(\Omega)}^2  \leq  ||\theta_n^\perp f||_{H^k(\Omega)}^2 \,.
\end{align*}
Due to Eq.~\eqref{Wkp} we have  $||\theta_n^\perp f||_{H^k(\Omega)} \xrightarrow[n\rightarrow \infty]{} 0$, which yields  $||\theta_n^\perp f||_{C^0(\Omega)} \in o\big((m/n)^{k}\big)$ for every fixed $m \in \N$, proving $(ii)$.
To show $(iii)$, we assume  $\nol f\nor_{H^k(\Omega)} \leq 1$, write 
$$\theta_l(f) = \sum_{\alpha \in A_{m,l}} c_{\alpha}e^{\pi i \li<\alpha,x\re>}\,, c_\alpha \in \C , $$ 
and  recall that for every $x \in \R^m$ there exists $\xi_x\in \Omega$ such that 
$$e^{i\pi\li<\alpha,x\re>} = \sum_{n \in \N}\frac{(i\pi\li<\alpha,x\re>)^n}{n!} =  \sum_{h \leq n }\frac{(i\pi\li<\alpha,x\re>)^h}{h!} + \frac{\partial^{n+1}_ve^{i\pi\li<\alpha,\xi_x\re>}}{(n+1)!}(i\pi\li<\alpha,x\re>)^{n+1}\,,$$ where 
$\partial_v$ denotes the partial derivative in direction $v = \frac{x}{|x|} \in \R^m$. Therefore, the last term is the \emph{Lagrange remainder}. Now 
$$ \partial^{n+1}_ve^{i\pi\li<\alpha,\xi_x\re>}= \partial^{n}_v\big<\nabla e^{i\pi\li<\alpha,\xi_x\re>},v\big> = \big<\alpha,v\big>\partial^{n}_ve^{i\pi\li<\alpha,\xi_x\re>}= \big<\alpha,v\big>^{n+1}e^{i\pi\li<\alpha,\xi_x\re>}$$ 
and $|\li<\alpha,v\re>|, |\li<\alpha,x\re>| \leq |\alpha|$ imply that 
$$\li|\frac{\partial^{n+1}_ve^{i\pi\li<\alpha,\xi_x\re>}}{(n+1)!}(i\pi\li<\alpha,x\re>)^{n+1}\re| \leq \frac{ (\pi|\alpha|)^{2n+2}}{(n+1)!} . $$ 
Hence, due to $|\alpha|\leq l$, $|c_\alpha| \leq 1$, we bound
\begin{align*}
 \nol\tau_{n}^\perp \theta_l(f)\nor_{C^0(\Omega)} & \leq  \li(\sum_{\alpha \in A_{m,l}} |c_{\alpha}|^2 \li(\frac{ (\pi|\alpha|)^{2n+2}}{(n+1)!}\re)^2\re)^{1/2}\\
 &\leq 
                                            \frac{\pi^{n+1}}{(n+1)!}\li(\sum_{\alpha \in A_{m,l}}|\alpha|^{4(n+1)}\re)^{1/2}\\
                                            &\leq  \frac{\pi^{n+1} l^{2n+2}}{(n+1)!}N(m,l)^{1/2} ,
 \end{align*}
which yields $(iii)$.
\qed\end{proof}

\subsection{Lebesgues Functions} 
Lebesgue functions measure the relative approximation error of an interpolation scheme. More precisely: 
Let $m,n \in \N$, $\Omega= [-1,1]^m \subseteq \R^m$, and $S_{m,n}:C^0(\Omega,\R) \lo \Pi_{m,n}$ denote an interpolation scheme with respect to unisolvent interpolation nodes $P_{m,n}$. 
That is, for $f \in C^0(\Omega,\R)$, $S_{m,n}(f) \in \Pi_{m,n}$ solves the PIP with respect to $(m,n,f,P_{m,n})$. Since $P_{m,n}$ is unisolvent and independent of $f$,  
it is readily verified that $S_{m,n}$ is a linear operator. Therefore, the following is well defined:

\begin{definition}[Operator norms and Lebesgues functions of interpolation operators] 
Let $m,n \in \N$, $\Omega=[-1,1]^m$, and $P_{m,n}\subseteq \Omega$ be unisolvent interpolation nodes. Consider the interpolation operator 
$S_{m,n}:C^0(\Omega , \R) \lo \Pi_{m,n}$ and its restriction $S_{m,n | H^k(\Omega)} : H^k(\Omega,\R) \lo \Pi_{m,n}$. Then we define by 
\begin{align*}
 \Lambda(P_{m,n},C^0(\Omega)): = &  \sup_{ f\in C^0(\Omega,\R), ||f||_{C^0(\Omega)} =1} ||S_{m,n}(f)||_{C^0(\Omega)} \\ 
  \Lambda(P_{m,n},H^k(\Omega)): = &  \sup_{ f\in H^k(\Omega,\R), ||f||_{H^k(\Omega)} =1} ||S_{m,n | H^k(\Omega)}(f)||_{C^0(\Omega)}
\end{align*}
the {\em operator norms} of $S_{m,n}$ and $S_{m,n | H^k(\Omega)}$, respectively. Denote with $\Pc_{m,n}$ the set of all unisolvent node sets with respect to $m,n \in \N$ then 
\begin{align*}
  \Lambda_{m,n,C^0(\Omega)} : \Pc_{m,n}\lo \R \,, &  \quad P_{m,n} \mapsto  \Lambda(P_{m,n},C^0(\Omega))\,,\\ 
  \Lambda_{m,n,H^k(\Omega)} : \Pc_{m,n}\lo \R \,, &  \quad P_{m,n} \mapsto   \Lambda(P_{m,n},H^k(\Omega))  
\end{align*}
are called the \emph{Lebesgue functions}  with respect to the considered norms.
\end{definition}
Due to Lemma \ref{shrink},  
$$\li\{f\in H^k(\Omega,\R) \mi  ||f||_{H^k(\Omega)} =1\re\} \subseteq \li\{f\in C^0(\Omega,\R) \mi ||f||_{C^0(\Omega)} =1\re\}, $$ implying that 
\begin{equation}\label{LEB}
 \Lambda(P_{m,n},H^k(\Omega))   \leq \Lambda(P_{m,n},C^0(\Omega)) \,.
\end{equation}

Understanding the behavior of Lebesgue functions in 1D is crucial for extending their definition to arbitrary dimensions. 
In particular, the following observation is key to our further considerations:
\begin{lemma} \label{PartLEB} Let $l,n \in \N$, $l,n \geq 1$, $\Omega=[-1,1]^m$, $P_n= \{p_1,\dots,p_{n+1}\}$  be a set of \mbox{$n+1$} pairwise disjoint nodes, and $P_l= \{p_1,\dots,p_{l+1}\} \subseteq P_n$.  
Then 
$$\Lambda(P_l,C^0(\Omega)) \leq \Lambda(P_n,C^0(\Omega))\,, \quad \Lambda(P_l,H^k(\Omega)) \leq \Lambda(P_n,H^k(\Omega)) \text{ for } k > m/2. $$
\end{lemma}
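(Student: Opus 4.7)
The plan is to reduce the inequality to the reproducing property of $S_n$ on $\Pi_n$. Since $S_l(f) \in \Pi_l \subseteq \Pi_n$, it is fixed by $S_n$ in the sense that for any continuous $\tilde f$ agreeing with $S_l(f)$ at every node of $P_n$, uniqueness of the degree-$\leq n$ Lagrange interpolant forces $S_n(\tilde f) = S_l(f)$. If I can produce such a $\tilde f$ with $\nol \tilde f \nor_{C^0(\Omega)} \leq \nol f \nor_{C^0(\Omega)}$, then
\[
\nol S_l(f)\nor_{C^0(\Omega)} \;=\; \nol S_n(\tilde f)\nor_{C^0(\Omega)} \;\leq\; \Lambda(P_n,C^0(\Omega))\,\nol\tilde f\nor_{C^0(\Omega)} \;\leq\; \Lambda(P_n,C^0(\Omega))\,\nol f\nor_{C^0(\Omega)},
\]
and taking the supremum over unit-norm $f$ delivers the claim.

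Concretely, I would fix $f \in C^0(\Omega)$ with $\nol f\nor_{C^0(\Omega)} = 1$, set $Q := S_l(f)$, and record the prescribed values $a_i := Q(p_i)$ for $i=1,\dots,n+1$. For $i \leq l+1$ one has $a_i = f(p_i)$, so $|a_i| \leq 1$; for $i > l+1$ the $a_i$ are determined by the polynomial $Q$. I would then construct $\tilde f \in C^0(\Omega)$ extending these prescribed values in a norm-controlled way (a Tietze-type extension, chosen to respect the envelope of $f$ near $P_l$ and the degrees of freedom of $Q$ near the remaining nodes). Once $\tilde f(p_i) = a_i$ for every $i \leq n+1$ and the sup-norm is under control, the reproducing identity $S_n(\tilde f) = Q$ closes the argument. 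The $H^k$ variant is formally identical: for $k > m/2$ the Sobolev embedding $H^k(\Omega,\R) \hookrightarrow C^0(\Omega,\R)$ makes point evaluation, and hence $S_n$, continuous on $H^k$, while the Sobolev extension theorem supplies a norm-controlled $\tilde f \in H^k$ realizing the prescribed nodal values.

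The main obstacle is precisely the construction of the norm-controlled extension $\tilde f$. The difficulty is that the values $Q(p_i)$ at the added nodes $p_i \in P_n \setminus P_l$ arise from the (possibly poorly conditioned) lower-degree interpolant $S_l(f)$ and are only a priori bounded by $\Lambda(P_l)\,\nol f\nor$ rather than by $\nol f\nor$ itself, so a naive pointwise interpolation between the $a_i$ need not satisfy $\nol\tilde f\nor_{C^0(\Omega)} \leq \nol f\nor_{C^0(\Omega)}$. Handling this step rigorously is the delicate technical core of the proof; in the $H^k$ setting the extra slack provided by the Sobolev norm makes the extension argument much more flexible than in the $C^0$ case, which is why the hypothesis $k > m/2$ enters the statement.
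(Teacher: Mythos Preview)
Your strategy coincides with the paper's: build a modified function $\tilde f$, of controlled norm, whose degree-$n$ interpolant on $P_n$ equals $S_{1,l}(f)$, and then read off $\|S_{1,l}(f)\|_{C^0}\le \Lambda(P_n)\,\|\tilde f\|$. The difference lies in what values are prescribed at the extra nodes $p_h$, $h>l+1$. You set $\tilde f(p_h)=S_{1,l}(f)(p_h)$, which is the only choice making $S_{1,n}(\tilde f)=S_{1,l}(f)$ hold by uniqueness, and you then correctly flag that $|S_{1,l}(f)(p_h)|$ is a priori bounded only by $\Lambda(P_l)$, so a norm-$1$ extension need not exist in $C^0$. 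This is the right diagnosis.

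The paper tries to dodge this obstacle by prescribing instead the constant value $\tilde f(p_h)=f(p_{l+1})$ for every $h>l+1$, so that the convex-combination cut-off automatically gives $\|\tilde f\|_{C^0}\le 1$. It then asserts, via the alternative divided-difference recursion, that the Newton coefficients $c_j$ of $S_{1,n}(\tilde f)$ vanish for $j>l$, hence $S_{1,n}(\tilde f)=S_{1,l}(f)$. That step is incorrect. Take $l=1$, $n=2$, nodes $p_1=0$, $p_2=1$, $p_3=2$, and $f(x)=x$: then $\tilde f$ has nodal values $0,1,1$, and
\[
c_2=[p_1,p_2,p_3]\tilde f=\frac{[p_2,p_3]\tilde f-[p_1,p_2]\tilde f}{p_3-p_1}=\frac{0-1}{2}=-\tfrac12\neq 0,
\]
so $S_{1,2}(\tilde f)(x)=x-\tfrac12 x(x-1)\neq x=S_{1,1}(f)(x)$. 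Repeating the last interpolation \emph{value} at new node \emph{locations} does not kill higher divided differences, since those depend on the node positions as well as the values. (The paper's companion claim that $\|\tilde f\|_{H^k}\le 1$ is likewise not ``obvious'': the cut-offs $\beta_h$ contribute derivatives whose size depends on the scales $\varepsilon,\delta$.)

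In short, your proposal and the paper pursue the same construction; you identify the genuine obstruction (norm control at the extra nodes) and leave it open, while the paper's attempt to bypass it by using $f(p_{l+1})$ instead of $S_{1,l}(f)(p_h)$ breaks the identity $S_{1,n}(\tilde f)=S_{1,l}(f)$ on which the whole argument rests. Neither argument, as written, closes the gap; the Sobolev embedding does not rescue the $H^k$ case either, because point evaluation being continuous forces $\|\tilde f\|_{H^k}\gtrsim |S_{1,l}(f)(p_h)|$ for any extension hitting those values.
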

\begin{proof} We choose small intervals $I_{h,\ee}=[p_{h}-\ee,p_{h}+\ee]$, $I_{h,\delta}=[p_{h}-\delta,p_{h}+\delta]$, $h >l+1$ with $\ee> \delta >0$ small enough so that $I_{h,\ee}\cap P_n = \{p_h\}$. Further,
we consider smooth cut--off functions 
$\beta_h : [p_h -\ee,p_h +  \ee] \lo [0,1]$,  with  $\beta_h(p_h\pm \ee)=1$, $\beta_h^\pm(p_h\pm \delta)=0$. We denote by $S_{1,k}$, $S_{1,n}$ the interpolation schemes with respect to $P_k$, $P_n$, respectively, and 
set  
$$ \tilde f(x)= \li\{\begin{array}{ll}
                      f(x) \,, & \text{if} \,\,\, x \not \in I_{h,\ee}, \,\,\, h>k+1 \\
                      \beta_h^\pm(x)f(x) + (1- \beta_h^\pm(x))f(p_{l+1})\,, & \text{if} \,\,\, x \in [p_h\pm\delta,p_h\pm\ee] \,, \,\,\, h>l+1 \\
                      f(p_{l+1})\,, & \text{if} \,\,\, x \in I_{h,\delta}  \,\,\, h>k+1 .
                      \end{array}\re.\,
$$ 
Obviously, it is $\nol \tilde f\nor_{C^0(\Omega)}, \nol \tilde f\nor_{H^k(\Omega)}\leq 1$. Moreover, following the alternative divided difference scheme from Eq.~\eqref{DDS2}, we find $f_j(p_j)=0$ for all  $j>l+1$.
Thus, the coefficients $c_j$, $j >l$, of the polynomial $S_{1,n}(\tilde f)$ in Newton form vanish, while the 
$c_0,\dots,c_{l}$ are given by $\tilde f_0(p_1) = f_0(p_1),\dots, \tilde f_l(p_{l+1}) =f_l(p_{l+1})$. Hence, 
$S_{1,n}(\tilde f) =S_{1,l}(f) = S_{1,l}(\tilde f)$. Therefore, 
$$\nol S_{1,n}(\tilde f)\nor_{C^0(\Omega)}=\nol S_{1,l}(\tilde f)\nor_{C^0(\Omega)} =\nol S_{1,l}(f)\nor_{C^0(\Omega)} \,.$$
Observing that  $f$ was arbitrarily chosen, this completes the proof. 
\qed\end{proof}

\subsection{Newton-Chebyshev Nodes}
In order to extend the study of Runge's phenomenon to multiple dimensions, we introduce a multidimensional notion of \emph{Chebyshev nodes} and provide the essential 
approximation results.
\begin{definition}[Multidimensional Newton-Chebyshev nodes] Let $m,n \in \N$, $T_{m,n}$, and $\Gamma_{m,n}$ be given. Let $P_{m,n}$ be the canonical multidimensional Newton nodes generated by  
$$\PP_{m,n}= \oplus_{l=1}^m \Cheb_n\,,$$
where $\Cheb_n$ was defined in Eq.~\eqref{cheby}.
Then, we call $P_{m,n}$ {\em canonical multidimensional Newton-Chebyshev nodes}, and we call every affine transformation $\tau(P_{m,n})$ of $P_{m,n}$ {\em multidimensional Newton-Chebyshev nodes}. 
\end{definition}

\begin{theorem} Let $m,n,k \in \N$, $k >m/2$, and  $S_{m,n}$ be an interpolation operator with respect to multidimensional Newton nodes $P_{m,n}$ generated by $\PP_{m,n}= \oplus_{l=1}^m P_l$, $\#P_l=n+1$. Then 
$$ \Lambda(P_{m,n},H^k(\Omega)) \leq \prod_{l=1}^m \Lambda(P_l, H^{k-(m-1)/2}([-1,1]))\,.$$
If $P_{m,n}$ are multidimensional Newton-Chebyshev nodes, then in particular 
\begin{equation}\label{MC}
 \Lambda(P_{m,n},H^k(\Omega)) \leq \Lambda(\Cheb_n,C^0([-1,1]))^m \in \Oc(\log(n)^m) \,.
\end{equation}
\label{NCL}
\end{theorem}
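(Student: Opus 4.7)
The plan is to prove the first inequality by induction on the dimension $m$, then deduce the Newton--Chebyshev bound as a corollary. By Lemma~\ref{TT} the operator norm of $S_{m,n}$ is invariant under affine change of coordinates, so I may assume $P_{m,n}$ is canonical with generating sets $P_l=\{p_{l,1},\dots,p_{l,n+1}\}$. The base case $m=1$ is tautological, since the right-hand side reduces to $\Lambda(P_1,H^k([-1,1]))$ and the left-hand side is the same object.

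For the inductive step I split $P_{m,n}=P^{(1)}\cup P^{(2)}$ along the hyperplane $H=\{x_m=p_{m,1}\}$ as in Theorem~\ref{GN}, so that $P^{(1)}\subseteq H$ is a canonical $(m-1,n)$-Newton set generated by $P_1,\dots,P_{m-1}$ and $P^{(2)}$ is a canonical $(m,n-1)$-Newton set. Theorem~\ref{SV} gives the decomposition
$$
S_{m,n}(f)\;=\;\widetilde S^{(1)}(f|_H)+Q_H\cdot S^{(2)}(f_1),\qquad Q_H(x)=x_m-p_{m,1},
$$
where $\widetilde S^{(1)}$ denotes the natural extension (constant in $x_m$) of the $(m-1,n)$-dimensional interpolation on $H$ and $f_1=(f-\widetilde S^{(1)}(f|_H))/Q_H$. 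For the first summand, the trace inequality~\eqref{trace} gives $\nol f|_H\nor_{H^{k-1/2}(H)}\le d\,\nol f\nor_{H^k(\Omega)}$; since $k>m/2$ implies $k-1/2>(m-1)/2$, the inductive hypothesis applied on $H\cong\R^{m-1}$ at Sobolev order $k-1/2$ produces the factor $\prod_{l=1}^{m-1}\Lambda(P_l,H^{(k-1/2)-(m-2)/2}([-1,1]))=\prod_{l=1}^{m-1}\Lambda(P_l,H^{k-(m-1)/2}([-1,1]))$, which is exactly the first $m-1$ factors of the required product.

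The main technical obstacle will be absorbing the second summand $Q_H\cdot S^{(2)}(f_1)$ into the same product, contributing the missing factor $\Lambda(P_m,H^{k-(m-1)/2}([-1,1]))$. The difficulty is that $f_1$ involves division by $Q_H$, which a priori costs a derivative in $x_m$ and destroys uniform Sobolev control. I would handle this either via a Hardy-type estimate on the subspace $\{g\in H^k(\Omega)\mid g|_H=0\}$, on which $g/Q_H$ remains controlled in a Sobolev norm adapted to the trace loss, followed by induction on $n$ applied to $S^{(2)}$; or, more transparently, by noting that the canonical Newton nodes sit on the tensor grid $\oplus_l P_l$, so that $S_{m,n}$ can be unfolded as an iterated composition of $m$ univariate interpolation operators $T_l$ acting one variable at a time. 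In the latter approach the product structure is manifest, and the trace theorem is invoked between consecutive univariate factors to carry regularity from $H^k(\Omega)$ down to $H^{k-(m-1)/2}([-1,1])$ at the innermost factor, so that the product bound appears directly.

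For the Newton--Chebyshev statement I combine inequality~\eqref{LEB} applied at the 1D level, yielding $\Lambda(P_l,H^{k-(m-1)/2}([-1,1]))\le \Lambda(\Cheb_n,C^0([-1,1]))$, with Theorem~\ref{Cheb}$(iii)$, which gives $\Lambda(\Cheb_n,C^0([-1,1]))\in\Oc(\log n)$; the $m$-fold product then lies in $\Oc(\log(n)^m)$. The hypothesis $k>m/2$ ensures $k-(m-1)/2>1/2$, so Sobolev embedding keeps each univariate restriction continuous and the whole argument well-posed.
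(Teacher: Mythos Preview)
Your overall inductive skeleton and the deduction of the Chebyshev bound from Eq.~\eqref{LEB} and Theorem~\ref{Cheb}$(iii)$ match the paper. The substantive difference is in how the inductive step is organized.

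You split off a \emph{single} hyperplane $H=\{x_m=p_{m,1}\}$ and then face the problem of controlling $f_1=(f-\widetilde S^{(1)}(f|_H))/Q_H$, which, as you correctly note, costs regularity. The paper avoids this entirely by unfolding \emph{all} $n{+}1$ slices in the $x_m$-direction at once, writing (your option~(b) done properly)
\[
S_{m,n}(f)=S_{m-1,n}(f_0)+Q_{H_1}\bigl(S_{m-1,n-1}(f_1)+Q_{H_2}\bigl(\cdots+Q_{H_n}S_{m,0}(f_n)\bigr)\cdots\bigr),
\]
as in Eq.~\eqref{NI}. Each $S_{m-1,n-i}$ is an $(m{-}1)$-dimensional interpolation on a slice, and by induction together with Lemma~\ref{PartLEB} (which lets one bound the degree-$(n{-}i)$ Lebesgue constant by the degree-$n$ one) they are all controlled by the common factor $\Lambda^*=\prod_{l=1}^{m-1}\Lambda(P_l,H^{k-(m-2)/2})$. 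What remains is then genuinely a one-dimensional Newton interpolation in $x_m$, contributing the missing factor $\Lambda(P_m,H^{k-(m-1)/2})$ without any Hardy-type argument.

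One caution about your option~(b): the canonical Newton nodes do \emph{not} form the full tensor grid $\bigotimes_l P_l$ but a sparse simplex-shaped subset, so $S_{m,n}$ is not literally a composition of $m$ commuting univariate interpolations. The correct version is the nested formula above, where the inner $(m{-}1)$-dimensional interpolants have decreasing degrees; Lemma~\ref{PartLEB} is precisely what makes this harmless.
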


\begin{proof} We argue by induction on $m$. For $m=1$ the claim directly follows from Eq.~\eqref{LEB}. If $m>1$ then we can assume w.l.o.g.~(i.e., by changing coordinates if necessary) that $P_{m,n}$ are canonical Newton-Chebyshev nodes, and we can consider the hyperplanes $H_1,\dots,H_{n}$ given by 
$H_i = \{x \in \R^m \mi x_m =p_{m,i} \in P_{m}\}$, $i =1,\dots,n$. Denote by $Q_{H_i}(x)=x_m -p_{m,i}$ the linear polynomial defining $H_i = Q_{H_i}^{-1}(0)$ and by $\pi_{H_i}: \R^m \lo H_i$, $ \pi_{H_i}(x) = (x_1,\dots,x_{m-1},p_{m,i})$, the corresponding projections. Then, by Theorems \ref{GN}, \ref{SV}, and Proposition \ref{UniN}, we have
\begin{align}
  S_{m,n}(f) =&  S_{m-1,n,H_1}(f) + Q_{H_1}\Big( S_{m-1,n-1}(f_1) + Q_{H_2}\big( S_{m-1,n-2}(f_2) + \cdots  \nonumber\\ 
  &Q_{H_n}\big(S_{m-1,1}(f_{n-1}) + S_{m,0}(f_n) \big) \dots \Big)\,,  \label{NI}
\end{align}
where 
$$f_0 =f\,, \quad f_k = \frac{f_{k-1}(x)-f_{k-1}(\pi_{H_k}(x))}{Q_H(x)} $$
and the $S_{m-1,n-i}$ interpolate with respect to $P_{m-1,n-i}$ generated by 
$$\PP_{m-1,n-i} = \{p_{m,i+1}\}\times \oplus_{l=1}^{m-1} P_l \,.$$
By observing that the interpolant $S_{m-1,n-i}(f_i)$ is constant along directions normal to $H_i$, and by identifying $H_i \cong \R^{m-1}$, induction and Lemma \ref{PartLEB} yield 
$$\Lambda(P_{m-1,n-i},H^k(\Omega)) \leq \prod_{l=1}^{m-1} \Lambda(P_l,H^{k-(m-2)/2}([-1,1]^{m-1})) =:\Lambda^*\,.$$
Though the $f_i$ are multivariate functions, the remaining interpolation is done with respect to $x_m$ in only $1$ variable. Hence, recalling the 1D estimation, we get
\begin{align*}
  \nol S_{m,n}(f)  \nor_{C^0(\Omega)} & \leq  \Lambda^*\nol f_0 + Q_{H_1}\big( f_1  + \cdots  +Q_{H_n}\big(f_{n-1} + f_n \big) \dots \big)\nor_{C^0(\Omega)}\\
                                      & \leq \Lambda^*\Lambda(P_m,H^{k-(m-1)/2}([-1,1]))\\
                                      &\leq \prod_{l=1}^{m} \Lambda(P_l,H^{k-(m-1)/2}([-1,1]))\,.
\end{align*}
This proves the first statement. The proof of Eq.~\eqref{MC} follows from Theorem \ref{Cheb}. 
\qed\end{proof}

\begin{theorem} Let $m,n,k \in \N$, $k>m/2$, $\Omega= [-1,1]^m \subseteq \R^m$, $\T^m = \R^m/2\Z^m$, and $\widetilde f \in H^k(\T^m,\R)$ with periodic representative $f$. 
Consider  an interpolation operator $S_{m,n}:H^k(\Omega,\R) \lo \Pi_{m,n}$ with respect to multidimensional Newton-Chebyshev nodes $P_{m,n}$.
Then  
  $$\nol f -S_{m,n}(f)\nor_{C^0(\Omega)} \xrightarrow[n\rightarrow \infty]{}0 \,. $$
\end{theorem}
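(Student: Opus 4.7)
The plan is to combine a Lebesgue-type inequality with an explicit two-stage approximation of $f$ by Fourier truncation followed by Taylor truncation, and to balance the truncation parameters by a diagonal argument. Since $S_{m,n}$ is a projection onto $\Pi_{m,n}$, the identity $f-S_{m,n}(f)=(f-Q)-S_{m,n}(f-Q)$ holds for every $Q\in\Pi_{m,n}$. Combining it with the Sobolev embedding $\nol g\nor_{C^0(\Omega)}\leq c\,\nol g\nor_{H^k(\Omega)}$ of Lemma~\ref{shrink}(i) (valid for $k>m/2$) and the operator-norm definition of $\Lambda(P_{m,n},H^k(\Omega))$ gives
$$\nol f-S_{m,n}(f)\nor_{C^0(\Omega)}\leq \bigl(c+\Lambda(P_{m,n},H^k(\Omega))\bigr)\,\nol f-Q\nor_{H^k(\Omega)}.$$
Since $\Lambda(P_{m,n},H^k(\Omega))\in\Oc(\log^m n)$ by Theorem~\ref{NCL}, the task reduces to producing a sequence $Q_n\in\Pi_{m,n}$ with $\log^m(n)\,\nol f-Q_n\nor_{H^k(\Omega)}\to 0$.

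The natural choice is $Q_n:=\tau_n(\theta_{l(n)}(f))\in\Pi_{m,n}$, with projections as in Eq.~\eqref{pro} and $l(n)\in\N$ to be specified. The triangle inequality splits
$$\nol f-Q_n\nor_{H^k(\Omega)}\leq \eta(l(n)) + \nol\tau_n^\perp(\theta_{l(n)}(f))\nor_{H^k(\Omega)},$$
where $\eta(l):=\nol\theta_l^\perp(f)\nor_{H^k(\Omega)}\to 0$ as $l\to\infty$ by the density statement of Eq.~\eqref{Compl}. For the Taylor-remainder term I intend to upgrade Lemma~\ref{shrink}(iii) from its $C^0$-bound to an $H^k$-bound: differentiating the Lagrange-remainder identity and applying Bernstein's inequality $\nol\partial^\beta\theta_l(f)\nor_{L^2(\Omega)}\leq (\pi l)^{|\beta|}\nol\theta_l(f)\nor_{L^2(\Omega)}$ for every $|\beta|\leq k$ should yield an estimate of the shape
$$\nol\tau_n^\perp(\theta_l(f))\nor_{H^k(\Omega)}\leq C_{m,k}\,l^k\,N(m,l)^{1/2}\,\frac{(\pi l^2)^{n+1}}{(n+1)!}\,\nol f\nor_{H^k(\Omega)},$$
which is super-exponentially small whenever $l^2=o(n)$.

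To finish I pick $l(n)$ diagonally: let $l(n)$ be the largest integer $\leq \lfloor n^{1/4}\rfloor$ satisfying $\eta(l(n))\leq 1/\log^{2m}(n+2)$. Since $\eta(l)\to 0$ and $\lfloor n^{1/4}\rfloor\to\infty$, such an $l(n)\to\infty$ exists for all sufficiently large $n$. The size constraint together with Stirling forces the Taylor contribution, multiplied by $\log^m n$, to decay super-exponentially, while the choice of threshold gives $\log^m(n)\,\eta(l(n))\leq \log^{-m}(n+2)\to 0$. Combining both estimates yields $\nol f-S_{m,n}(f)\nor_{C^0(\Omega)}\to 0$. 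The principal technical obstacle is the $H^k$-upgrade of Lemma~\ref{shrink}(iii), which requires a careful multi-index accounting of the derivatives of the Lagrange remainder of the trigonometric polynomial $\theta_l(f)$; a subtler, conceptual point is that the rate of $\eta(l)\to 0$ is not quantified by the bare hypothesis $f\in H^k(\T^m,\R)$, so matching the two constraints $l(n)\leq n^{1/4}$ and $\eta(l(n))\leq \log^{-2m}(n+2)$ for \emph{every} admissible $f$ may require invoking a Jackson-type estimate that translates Sobolev regularity into an explicit decay rate of the Fourier tail, or passing to a subsequence when the regularity is merely $k>m/2$.
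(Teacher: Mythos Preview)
Your approach is essentially the paper's: the paper also splits via $\theta_l$ and $\tau_n$, bounds the Taylor tail by Lemma~\ref{shrink}(iii), bounds the Fourier tail through the Lebesgue constant of Theorem~\ref{NCL}, and balances $l=l(n)$ against $n$ (their balance conditions are $(Dl)^{K_1}=o(n)$ and $\log(n)^{K_0}=o(l)$, playing the same role as your $l\leq n^{1/4}$). Your two flagged obstacles are real but routine: the $H^k$-upgrade of Lemma~\ref{shrink}(iii) follows by term-wise differentiation of the Lagrange remainder and only introduces extra polynomial factors in $l$, while the missing decay rate for $\eta(l)=\nol\theta_l^\perp(f)\nor_{H^k}$ is supplied exactly by the Jackson-type estimate you mention --- for any $m/2<k'<k$ one has $\nol\theta_l^\perp(f)\nor_{H^{k'}}\lesssim l^{-(k-k')}\nol f\nor_{H^k}$, so running the entire argument with $\Lambda(P_{m,n},H^{k'})$ in place of $\Lambda(P_{m,n},H^k)$ (Theorem~\ref{NCL} applies for every $k'>m/2$) closes the proof without any subsequence.
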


\begin{proof} The proof is based on balancing the approximation of $f$ in Fourier basis and by polynomials. To do so, we let $n \in \N$, $D,K_1,K_0 \in \R^+ $, $ D\geq 1$, $K_1>K_0\geq 2 $, and $l=l(n) \in \N$ such that 
\begin{equation}\label{bal}
  \frac{(Dl)^{K_1}}{n}  \xrightarrow[n\rightarrow \infty]{} 0  \quad \text{and} \quad  \frac{\log(n)^{K_0}}{l}  \xrightarrow[n\rightarrow \infty]{} 0 \,,
\end{equation}
i.e., $(Dl)^{K_1} \in o(n)$ and $\log(n)^{K_0}\in o(l)$. 
We consider the projections $\tau_n,\tau_n^\perp,\theta_l,\theta_l^\perp$ from Eq.~\eqref{pro} and use the fact that $S_{m,n}$ is a linear operator in order to bound
\begin{align*}
  \nol f -S_{m,n}(f)\nor_{C^0(\Omega)}  & \leq  \nol \theta_l(f) -S_{m,n}(\theta_l(f))\nor_{C^0(\Omega)}  \\
  &+ \nol \theta_l^\perp(f) -S_{m,n}(\theta_l^\perp(f))\nor_{C^0(\Omega)} \,.
\end{align*}
We then use Theorem \ref{NCL} and Lemma \ref{shrink}$(i)$ to  observe that there exists a constant $c\in \R^+$ so that the second term on the right-hand side satisfies 
\begin{align*}
 \nol \theta_l^\perp(f) -S_{m,n}(\theta_l^\perp(f))\nor_{C^0(\Omega)}  & \leq  \nol \theta_l^\perp(f) -S_{m,n}(\theta_l^\perp(f))\nor_{H^k(\Omega)} \\
 &\leq \big(1+\Lambda(S_{m,n},H^k(\Omega))\big) \nol \theta_l^\perp(f)\nor_{H^k(\Omega)} \\
 									&\leq c\big(1+\log(n)^m)\big)\nol \theta_l^\perp(f)\nor_{C^0(\Omega)} \,.
\end{align*}
By  Lemma \ref{shrink}$ii)$, we find that $\nol \theta_l^\perp(f)\nor_{C^0(\Omega)}  \in  o((m/l)^k)$ for $k>m/2$. Since $K_0 \geq 2$, we obtain     
$$  \nol \theta_l^\perp(f) -S_{m,n}(\theta_l^\perp(f))\nor_{C^0(\Omega)}  \leq \frac{1+\log(n)^m}{l^{m/2}}\nol f\nor_{C^0(\Omega)}  \xrightarrow[n\rightarrow \infty]{} 0 \,.$$ 
Similarly, we bound the remaining term by 
\begin{align*}
\nol \theta_l(f) -S_{m,n}(\theta_l(f))\nor_{C^0(\Omega)}  & \leq  \nol \pi_n\theta_l(f) -S_{m,n}(\pi_n\theta_l(f))\nor_{C^0(\Omega)} \\
&+ \nol \pi_n^\perp\theta_l(f) -S_{m,n}(\pi_n^\perp\theta_l(f))\nor_{C^0(\Omega)}. 
\end{align*}
Since $\pi_n\theta_l(f) \in \Pi_{m,n}$, we have $S_{m,n}(\pi_n\theta_l(f))=\pi_n\theta_l(f)$, implying that the first term vanishes. 
Now we again use Theorem \ref{NCL} and Lemma \ref{shrink}$(i)$ and \ref{shrink}$(iii)$ to observe that there exists a constant $c\in \R^+$ so that 
\begin{align*} 
 \nol \pi_n^\perp\theta_l(f) -S_{m,n}(\pi_n^\perp\theta_l(f))\nor_{C^0(\Omega)} & \leq  \nol \pi_n^\perp\theta_l(f) -S_{m,n}(\pi_n^\perp\theta_l(f))\nor_{H^k(\Omega)} \\
 & \leq c(1 + \log(n)^m)\nol \pi_n^\perp\theta_l(f)\nor_{H^k(\Omega)}\\ 
 & \leq c(1 + \log(n)^m)N(m,l)^{1/2}\frac{(\pi l^2)^{n+1}}{(n+1)!} \nol f\nor_{C^0(\Omega)}. 
\end{align*}
Since $N(m,l)=\frac{(m+l)!}{n!l!} = \frac{(m+1)\cdots(m+l)}{l!} \in \Oc(l^m)$ and because $n! >(n/2)^{n/2}$ for all $n \in \N$, there exists a contant $d\in \R^+$ so that  
\begin{align*} 
c(1 + \log(n)^m)N(m,l)^{1/2}\frac{(\pi l^2)^{n+1}}{(n+1)!}   &  \leq d(1+l^m)l^{m/2}\li(\frac{(\pi l^2)}{n+1} \re)^{\!\! n+1} .
\end{align*}
Choosing $D, K_1 \in \R^+$ large enough we can further use Eq.~\eqref{bal} to bound
\begin{align*}
  \nol \pi_n^\perp\theta_l(f) -S_{m,n}(\pi_n^\perp\theta_l(f))\nor_{C^0(\Omega)} &  \leq d(1+l^m)l^{m/2}\frac{(\pi l^2)^{n+1}}{n^{(n+1)/2}}   \nol f \nor_{C^0(\Omega)}\\
  & \leq \li(\frac{(Dl)^{K_1}}{n+1}\re)^{\!\!(n+1)/2}  \nol f \nor_{C^0(\Omega)}\xrightarrow[n\rightarrow \infty]{} 0 \,.
\end{align*}
Hence, all terms converge to zero as $n \rightarrow \infty$, proving the theorem.
\qed\end{proof}

\subsection{Approximation Errors }
We generalize the classic estimates of approximation errors in 1D to arbitrary dimensions $m \in \N$. 

\begin{theorem} Let $m,n \in \N$ and $\Omega= [-1,1]^m \subseteq \R^m$. 
 Let $S_{m,n}:H^k(\Omega,\R) \lo \Pi_{m,n}$, $k>m/2$, denote an interpolation operator with respect to canonical multidimensional Newton nodes $P_{m,n}$ generated by $\PP_{m,n}=\oplus_{i=1}^m P_i$, $P_i=\li\{p_{i,1},\dots,p_{i,n+1}\re\}$.
    \begin{enumerate}
    \item[i)] If $f \in C^{n+1}(\Omega,\R)$  then for every $\alpha \in A_{m,n}\setminus A_{m,n-1}$, $i \in\{1,\dots,m\}$  and every $x \in \Omega$ there is 
  $\xi_x\in \Omega$ such that 
\begin{equation}\label{AE}
   | f(x)- S_{m,n}(f)(x) |\leq \frac{1}{\alpha_i!} \partial^{\alpha_i+1}_{x_i}f(\xi_x) |N_\alpha(x)|\,, 
\end{equation}
where  $N_\alpha(x) = \prod_{i=1}^m\prod_{j=1}^{\alpha_i}(x_i-p_{i,j})$, $x =(x_1,\dots,x_m)$, $p_{i,j}\in P_i$.
If  $P_{m,n}$ are multidimensional Newton-Chebyshev nodes, then we can further bound 
\begin{equation}\label{AEE} 
| f(x)- S_{m,n}(f)(x) |  \leq \frac{1}{2^{\alpha_i}\alpha_i! }\partial^{\alpha_i+1}_{x_i} f(\xi_x)\,.
\end{equation}
  \item[ii)] 
 For any unisolvent node set $P_{m,n}$ the relative interpolation error is
  $$\nol f- S_{m,n}(f) \nor_{C^0(\Omega)} \leq (1+\Lambda(P_{m,n},H^k(\Omega))\nol f- Q^*_{m,n} \nor_{C^0(\Omega)}$$
  for all $f \in H^k(\Omega,\R)$, where  $Q^*_{m,n}$ is an optimal approximation that minimizes the $C^0$-distance to $f$.
 \end{enumerate}
\end{theorem}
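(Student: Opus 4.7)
The plan is to dispatch (ii) by the classical Lebesgue-constant argument, and to prove (i) by induction on the dimension $m$, using the recursive PIP decomposition of Theorems \ref{GN} and \ref{SV}.

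For (ii), the starting observation is that $S_{m,n}$ is a linear projection onto $\Pi_{m,n}$: unisolvence of $P_{m,n}$ forces $S_{m,n}(Q) = Q$ for every $Q \in \Pi_{m,n}$. Consequently, for any $Q \in \Pi_{m,n}$ and any $f \in H^k(\Omega,\R)$,
$$f - S_{m,n}(f) \;=\; (f - Q) - S_{m,n}(f - Q),$$
so taking $C^0$-norms and invoking the operator-norm definition of the Lebesgue function yields
$$\nol f - S_{m,n}(f) \nor_{C^0(\Omega)} \;\leq\; \big(1 + \Lambda(P_{m,n}, H^k(\Omega))\big)\,\nol f - Q \nor_{C^0(\Omega)}.$$
Specialising to $Q = Q^*_{m,n}$ proves the claim.

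For (i), I would induct on $m$. The base case $m=1$ is the classical Newton remainder formula $f(x) - S_{1,n}(f)(x) = \frac{f^{(n+1)}(\xi_x)}{(n+1)!}\prod_{j=1}^{n+1}(x-p_{1,j})$, obtained from the divided-difference representation together with the mean value theorem; bounding the spare factor $|x - p_{1,n+1}| \leq 2$ on $\Omega=[-1,1]$ yields the weaker form with constant $1/n!$. For the inductive step, fix $(\alpha,i)$ and, after permuting coordinates, assume $i = m$. Apply Theorem \ref{SV} with $H_1 = \{x_m = p_{m,1}\}$ to write
$$S_{m,n}(f) \;=\; S_{m-1,n}(f|_{H_1}) + (x_m - p_{m,1})\,S_{m,n-1}(f_1),\qquad f_1 = \tfrac{f - S_{m-1,n}(f|_{H_1})}{x_m - p_{m,1}},$$
and iterate this splitting $\alpha_m$ times along the $x_m$-direction. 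The resulting telescoping identity expresses $f - S_{m,n}(f)$ as a one-dimensional Newton product $\prod_{j=1}^{\alpha_m}(x_m - p_{m,j})$ multiplied by a lower-dimensional interpolation residual living on the hyperplane $\{x_m = p_{m,\alpha_m+1}\}$. The 1-D remainder formula contributes the factor $\frac{1}{\alpha_m!}\partial_{x_m}^{\alpha_m+1}f(\xi_x)$ via the mean value theorem for divided differences, while the lower-dimensional residual is controlled by the inductive hypothesis applied with the restricted multi-index $(\alpha_1,\dots,\alpha_{m-1})$; their product reassembles $|N_\alpha(x)|$ as required. The Newton-Chebyshev refinement follows from the minimax property $\max_{x\in[-1,1]}|\prod_{j=1}^{k}(x-p_j)| \leq C\cdot 2^{-k}$ of partial Chebyshev-rooted products, applied in the $x_i$-direction.

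The main obstacle is the combinatorial bookkeeping. The Newton recursion peels off only \emph{one specific} hyperplane at each step, whereas the theorem allows the user to choose $(\alpha,i)$ freely; one must therefore reorganise the recursion so that direction $i$ is processed first and exactly $\alpha_i$ factors (not $n+1$) are generated before descending to the lower-dimensional sub-problem. Pinning down the sharp prefactor $\frac{1}{\alpha_i!}$ (rather than $\frac{1}{(\alpha_i+1)!}$) requires carefully absorbing one spare factor $|x - p_{i,\alpha_i+1}| \leq 2$ into the constant, and verifying that the residual restricted to the hyperplane remains in the precise form required by the induction hypothesis is a non-trivial tracking task. The Chebyshev estimate is the further technical point: since only a \emph{partial} subproduct of the $n+1$ Chebyshev nodes is used, the minimax identity is not directly available, and one must combine the bound on the full Chebyshev polynomial with a pointwise estimate $|x - p_{i,j}| \leq 2$ on the omitted factors to recover the stated $2^{-\alpha_i}$ prefactor.
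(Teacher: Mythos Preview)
Your argument for (ii) is correct and is exactly the paper's argument.

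For (i), your approach diverges from the paper's and contains a genuine gap. The paper does \emph{not} induct on $m$. Instead it runs the classical one-dimensional Rolle argument directly along a single line in $\R^m$: for fixed $\alpha$ with $|\alpha|=n$ and fixed direction $i$, it takes the line
\[
L_{\alpha,i}=\bigl\{x\in\R^m : x_j=p_{j,1+\alpha_j}\ \text{for all }j\neq i\bigr\},
\]
chooses $\bar x\in L_{\alpha,i}\cap\Omega$, forms the auxiliary function $g(x)=f(x)-Q_{m,n,f}(x)-G(\bar x)N_\alpha(x)$ with $G(\bar x)=(f(\bar x)-Q_{m,n,f}(\bar x))/N_\alpha(\bar x)$, observes that $g$ has $\alpha_i+1$ roots on $L_{\alpha,i}$, and applies Rolle's theorem $\alpha_i+1$ times. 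The only structural input from the multivariate theory is that the restriction of $Q_{m,n,f}$ to $L_{\alpha,i}$ has degree at most $\alpha_i$ in $x_i$, which comes from the nested Newton representation. The Chebyshev refinement then follows from the one-dimensional minimax bound combined with Lemma~\ref{PartLEB}.

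The gap in your plan is the claim that peeling $\alpha_m$ times in the $x_m$-direction produces ``a lower-dimensional interpolation residual living on the hyperplane $\{x_m=p_{m,\alpha_m+1}\}$''. It does not: the telescoping gives
\[
f-S_{m,n}(f)=\prod_{j=1}^{\alpha_m}(x_m-p_{m,j})\,\bigl[f_{\alpha_m}-S_{m,n-\alpha_m}(f_{\alpha_m})\bigr],
\]
and the bracketed term is still a full $m$-dimensional interpolation residual of degree $n-\alpha_m$, not an $(m-1)$-dimensional one. Reducing the dimension would require exhausting \emph{all} $x_m$-layers, which overshoots $\alpha_m$ and destroys the factor $N_\alpha$. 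One could attempt a multi-index recursion, peeling $\alpha_j$ layers in each direction $j$ in turn, but then the terminal residual is a multivariate divided difference difference, and the mean-value theorem for divided differences delivers order $\alpha_i$, not $\alpha_i+1$; extracting the stated one-directional bound $\frac{1}{\alpha_i!}\partial_{x_i}^{\alpha_i+1}f(\xi_x)$ from that object is not the straightforward step you suggest. The paper's Rolle-on-a-line argument bypasses this entire bookkeeping problem.
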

\begin{proof} Changing coordinates if necessary, we can assume w.l.o.g.~that $P_{m,n}$ are canonical nodes. To show $(i)$, we follow the argumentation of the classical proof in 1D~\cite{gautschi}.
We consider the line 
$$L_{\alpha, i}= \li\{x \in \R^m \mi x_j = p_{j,1+\alpha_j}\,\,\text{for}\,\, 1 \leq j< i \,, \quad  x_j = p_{j,1+\alpha_j}  \,\,\text{for}\,\, i<j\leq m  \re\}$$
and choose $\bar x \in (\Omega \cap L_{\alpha,i})\setminus P_{m,n}$.  The function $g_{\alpha,i} : \R^m \lo \R$ given by 
$$g_{\alpha,i}(x) = f(x) -Q_{m,n,f}(x) - G(\bar x) N_\alpha(x) \,, \,\,\, G(\bar x)= \big(f(\bar x) -Q_{m,n,f}(\bar x)\big)/N_\alpha(\bar x) $$
is of class $C^{n+1}$ and possesses \mbox{$\alpha_i +1$} roots, namely $\{p_{i,1},\dots,p_{i,\alpha_i}, \bar x\}$, on $L_{\alpha, i}$. 
Recursively applying Rolle's Theorem, this implies that  $\partial^{k}_{x_i}g_{\alpha,i}$ possesses \mbox{$\alpha_i+1-k$} roots on $L_{\alpha,i}$, $0 \leq k \leq \alpha_i$. Hence 
$$ \partial^{\alpha_i+1}_{x_i} g_\alpha(\xi)= 0 \,, \quad \text{for some}\,\,\, \xi=\xi_{\bar x} \in L_{\alpha,i} \cap \Omega \,.$$ 
Theorems \ref{GN} and \ref{SV} yield Eq.~\eqref{NI}, which implies that the restriction of $Q_{m,n,f}$ to $L_{\alpha,i}$  is of degree $\alpha_i$. Thus, $\partial^{\alpha_i +1}_{x_i} Q_{m,n,f\,| L_{\alpha,i}}= 0$. 
Since $\partial^{\alpha_i}_{x_i} N_\alpha(x) = \alpha_i!$, this yields  
$$G(\bar x) = \frac{1}{\alpha_i! }\partial^{\alpha_i+1}_{x_i} f(\xi), $$ 
implying Eq.~\eqref{AE}. Combining Lemma \ref{PartLEB} with the classic error estimation for Chebyshev nodes in 1D \cite{gautschi} yields Eq.~\eqref{AEE}. To show $(ii)$, we recall that $S_{m,n}$ is a projection, i.e., 
$S_{m,n}(S_{m,n})=S_{m,n}$ implying that $S_{m,n}(Q) = Q$ for any $Q \in \Pi_{m,n}$. Thus, for every $f \in H^k(\Omega,\R)$, we bound
\begin{align*}
 \nol f- S_{m,n}(f) \nor_{C^0(\Omega)}  & = \nol f- Q_{m,n}^* + Q_{m,n}^* -  S_{m,n}(f) \nor_{C^0(\Omega)}  \\ 
					& \leq \nol f- Q_{m,n}^*\nor_{C^0(\Omega)}  +  \nol Q_{m,n}^* -  S_{m,n}(f) \nor_{C^0(\Omega)}  \\ 
					& = \nol f- Q_{m,n}^*\nor_{C^0(\Omega)}  +  \nol S_{m,n}(Q_{m,n}^*) -  S_{m,n}(f) \nor_{C^0(\Omega)}  \\ 
					& \leq (1+\Lambda(S_{m,n}),H^k(\Omega))\nol f- Q_{m,n}^*\nor_{C^0(\Omega)} .
\end{align*}
\qed\end{proof}

In summary, we have established all approximation results of Main Result III as stated in Theorem \ref{III} in Section \ref{main}, thereby extending the well-known classic results from 
1D to arbitrary dimensions. This answers Question \ref{Task} set out in the problem statement.

 \section{Numerical Experiments} 
 \label{EX}
In order to illustrate our approach and demonstrate its performance in practice, we implement a prototype MATLAB (R2018a (9.4.0.813 654) version of the PIP-SOLVER running on an Apple MacBook Pro (Retina, 15-inch, Mid 2015) with a 2.2\,GHz Intel Core i7 processor and 16\,GB 1600\,MHz DDR3 memory unser macOS Sierra (version 10.12.6.).
The following numerical experiments demonstrate the computational performance and approximation accuracy of our solver in comparison with the classic numerical approaches.

\begin{figure}[t!]
\begin{minipage}[t]{0.5\textwidth}
\vspace{-2.0cm}
  \includegraphics[scale=0.3]{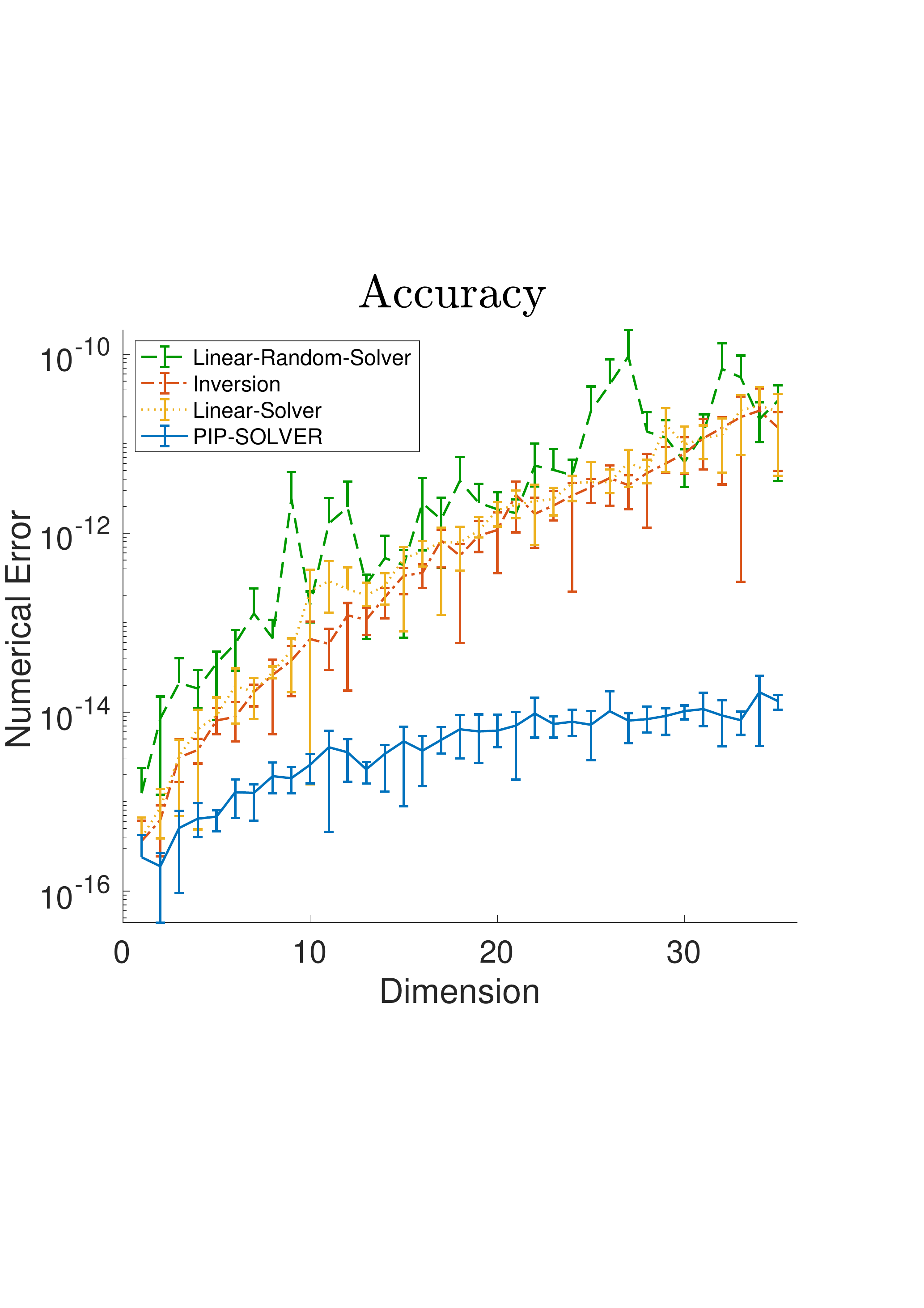}
  \vspace{-2.5cm}
  \caption{Numerical error for degree $n=3$.} \label{ACC1}
 \end{minipage}
\begin{minipage}[t]{0.5\textwidth}
\vspace{-2.1cm}
  \includegraphics[scale=0.3]{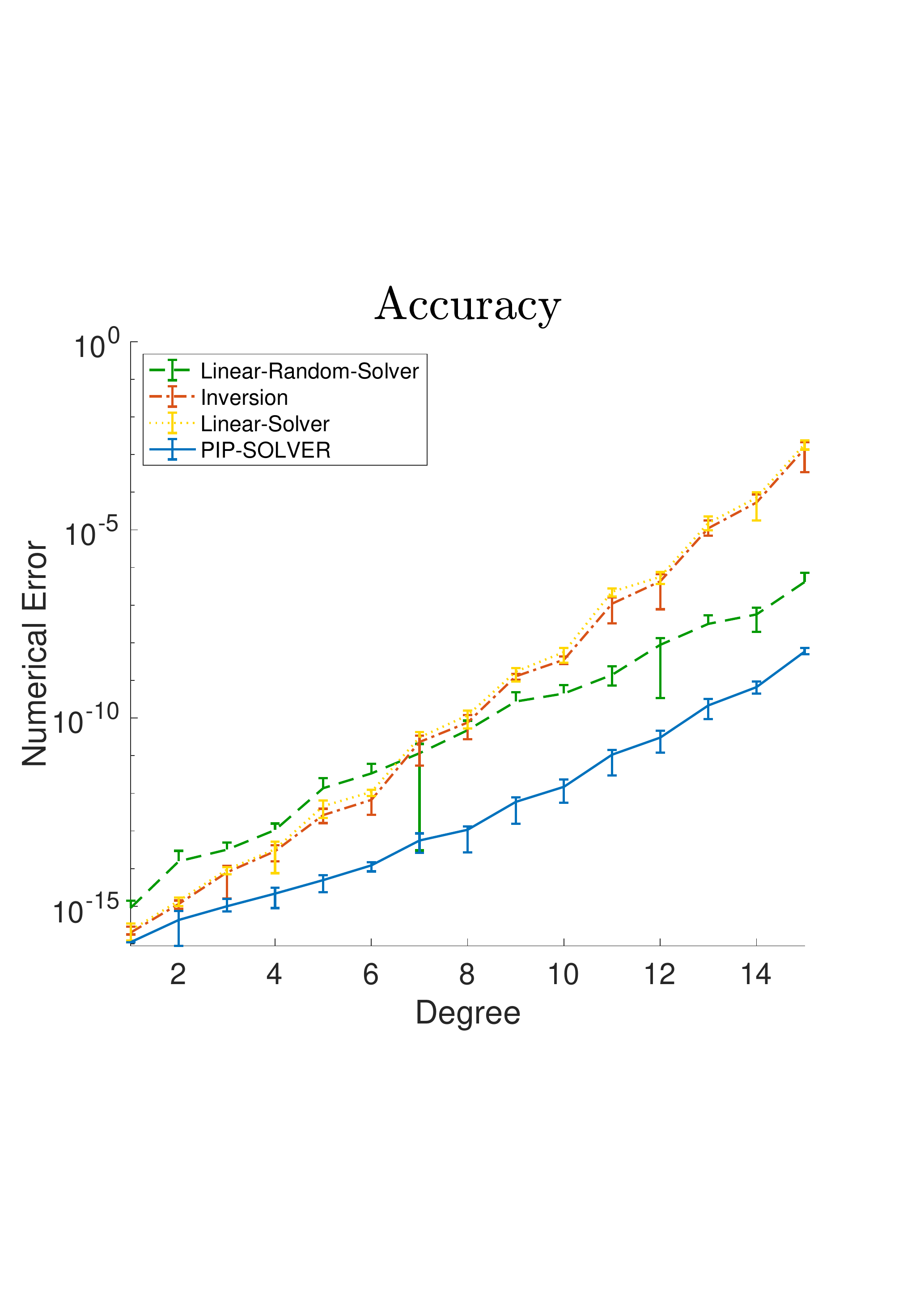}
  \vspace{-2.4cm}
  \caption{Numerical error for dimension $m=5$.} \label{ACC2}
 \end{minipage}
\end{figure}

 For given $m,n \in \N$ and function $f: \R^m \lo \R$, we compare  the following methods in terms of accuracy and runtime: 
 \begin{enumerate}
  \item [i)] The \emph{PIP-SOLVER} generates multidimensional Newton-Chebyshev nodes and determines the coefficients $C \in \R^{N(m,n)}$ of the interpolant $Q_{m,n,f}$ in multivariate Newton form.  
 \item[ii)] The \emph{Linear Solver} uses the multidimensional Newton-Chebyshev nodes $P_{m,n}$ generated by the PIP-SOLVER and then applies the MATLAB linear system solver to solve 
 $$V_{m,n}(P_{m,n})C=F\,, \,\, F=\big(f(p_1),\dots,f(p_N(m,n))\big)^\mathsf{T} \in \R^{N(m,n)}$$ for the coefficients $C \in \R^{N(m,n)}$ of the interpolant $Q_{m,n,f}$ in normal form.
  \item[iii)] The \emph{Linear Random Solver} uses nodes $P_{m,n}$ placed uniformly at random and then applies the MATLAB linear system solver to solve
 $$V_{m,n}(P_{m,n})C=F\,, \,\, F=\big(f(p_1),\dots,f(p_N(m,n))\big)^\mathsf{T} \in \R^{N(m,n)}$$ for the coefficients $C \in \R^{N(m,n)}$ of the interpolant $Q_{m,n,f}$ in normal form. 
 \item[iv)] The \emph{Inversion} method uses the multidimensional Newton-Chebyshev nodes $P_{m,n}$ generated by the PIP-SOLVER and then inverts the Vandermonde matrix $V_{m,n}(P_{m,n})$ using LU-decomposition to compute the coefficients $C \in \R^{N(m,n)}$ of the interpolant $Q_{m,n,f}$ in normal form. 
  \end{enumerate}

\begin{experiment}
We first compare the accuracy of the three approaches, which also serves to validate our method. 
To do so, we choose uniformly-distributed random numbers $c_1,\dots,c_{N-1} \in [-1,1]^{N(m,n)}$  
to be the  coefficients of a polynomial $Q \in \Pi_{m,n}$ in normal or multivariate Newton form. 
We then set $f = Q$ and measure the maximum absolute error in any coefficient, i.e.~$\nol c- \widetilde{c} \nor_\infty$, when recovering $\widetilde Q$ by solving the PIP with respect to $(m,n,f)$. 
 \end{experiment}
 
Figures \ref{ACC1} and \ref{ACC2} show the average and min-max span of the numerical errors (over 5 repetitions with different $i.i.d.$~random coefficients; same 5 for every approach) 
for fixed degree $n=3$ and dimensions $m=2,\dots,35$, as well as for fixed dimension $m=5$ and degree $n=1,\dots,15$, with logarithmic scale in the $y$-axis.

The case $n=3$ is of high practical relevance, e.g., when interpolating cubic splines.
In both cases, all methods show high accuracy, which reflects the fact that Newton-Chebyshev nodes yield well-conditioned PIPs. 
This is confirmed by the \emph{Linear Solver} and the \emph{Inversion} method showing comparable accuracy, while the \emph{Linear Random Solver} is less accurate. 
The error of the PIP-SOLVER is almost constant on the level of the machine accuracy (double-precision floating-point number types).
Especially in high dimensions, the PIP-SOLVER is several orders of magnitude more accurate than the other approaches.  

\begin{figure}[t!]
\begin{minipage}[t]{0.5\textwidth}
\vspace{-2.2cm}
  \includegraphics[scale=0.34]{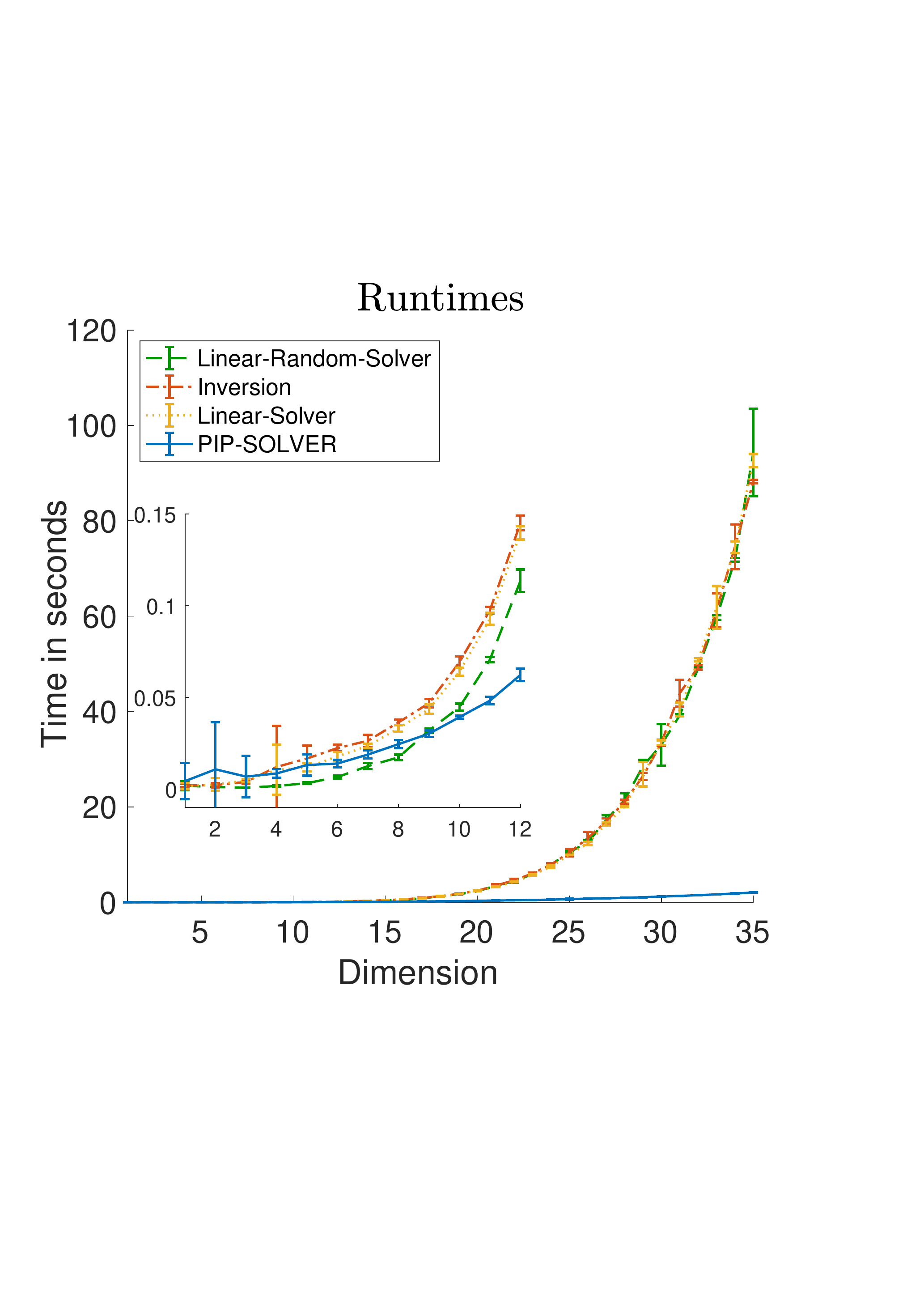}
  \vspace{-2.9cm}
  \caption{Runtimes for $n=3$, $1\leq m\leq 35$.} \label{time1}
 \end{minipage}
\begin{minipage}[t]{0.5\textwidth}
\vspace{-2.25cm}
  \includegraphics[scale=0.34]{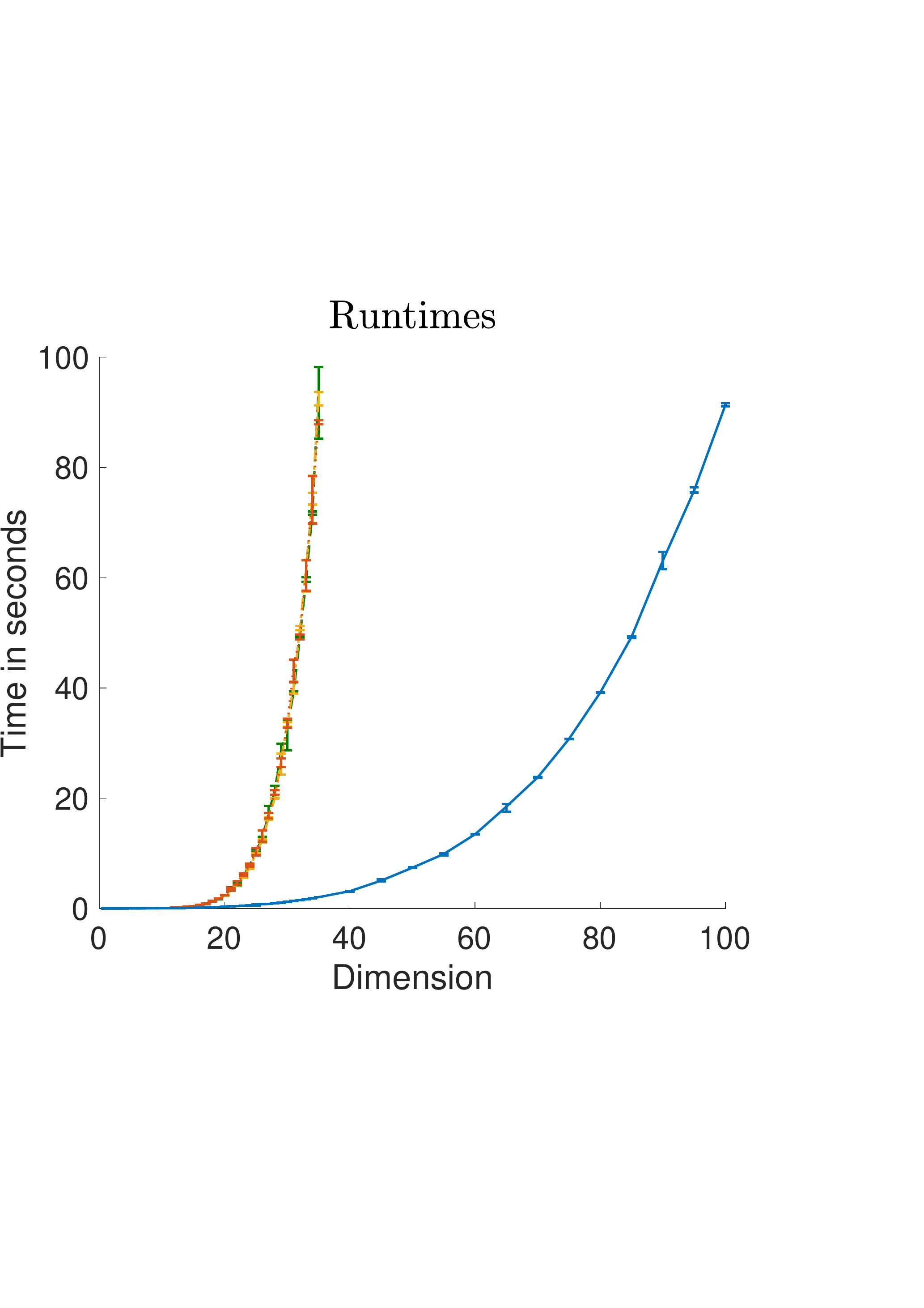}
  \vspace{-2.85cm}
  \caption{Runtimes for $n=3$, $1\leq m\leq 100$.} \label{time2}
 \end{minipage}
\end{figure}

 \begin{experiment}
We compare the computational runtimes of the approaches. 
To do so, we choose uniformly-distributed random function values $f_1,\dots,$ $f_{N(m,n)} \in [-1,1]$, $m,n \in \N$ as interpolation targets.
Then, we measure the time required to generate the unisolvent interpolation nodes $P_{m,n}$ and add the time taken 
to solve the PIP with respect to $f : \R^m \lo R$ with $f(p_i)=f_i$, $p_i \in P_{m,n}$, $i =1,\dots,N(m,n)$ by each approach.  
\end{experiment}

\begin{table}[t!]
 {\renewcommand{\arraystretch}{1.2}
\begin{center}
{\footnotesize
\begin{tabular}[t]{| l | l | l | l | l |} 
\hline 
Algorithm & Intervals & Degree  &  Pre-factor $p$ & Exponent $q$\\ 
 \hline
   \emph{Inversion} &$m =15,\dots,35 $ &  $n=3$&          $p =0.010737  $ &  $q=2.2982 $ \\
 \hline
 \emph{Linear Solver}   & $m =15,\dots,35 $ &  $n=3$&         $p=0.0076072  $ & $q=2.2907 $ \\ 
 \hline
 \emph{Linear Random Solver} &$m =15,\dots,35 $ & $n=3$ & $p=0.012009 $ &   $q=2.3289 $ \\ 
 \hline
  \emph{PIP-SOLVER} & $m=15,\dots,35$ & $n =3$&     $p=0.0076964 $ &   $q=1.2006  $ \\ 
 \hline
  \emph{PIP-SOLVER } & $m=15,\dots,100$ &  $n =3$&       $p=0.0034101  $ &   $q=1.2258 $ \\ 
 \hline
\end{tabular} 
}
\vspace{0.0cm}
\end{center}
\caption{Scaling of the computational cost by fitting the cost model $pN(m,n)^q$.}}\label{Tab}
\end{table}

The average and min-max span of the runtimes (over 10 repetitions with different $i.i.d.$~random function values; same 10 for every approach) are shown in Figures \ref{time1} and \ref{time2} versus the dimension $m$ for fixed degree $n=3$. 

While the actual problem size is $N(m,n)$, the dimension $m$ or the degree $n$ are more intuitive when characterizing a problem of fixed degree or fixed dimension, respectively.  
In low dimensions (inset figure) the \emph{Linear Random Solver} performs best due to its low overhead for generating unisolvent nodes. 
However, at about $m=9$ there is a  cross-over  above which the PIP-SOLVER is much faster than the other methods.
The absolute runtimes are below 0.05 seconds at the cross-over point, even though our prototype implementation of the PIP-SOLVER is not optimized.
As Figure \ref{time2} shows, even our simple implementation of the PIP-SOLVER can handle instances of dimension $m=100$ in the same time as the 
other methods require for $m=35$. Since $N(100,3)/N(35,3) \approx 20 $ the PIP-SOLVER outperforms the other approaches. 

The scaling of the computational cost with respect to problem size $N(m,n)$ is reported in Table \ref{Tab} for $n=3$. We fit all measurements with the cost model $pN(m,n)^q$. All fits show an \emph{R-square} of 1. 
We observe that the exponent of the cost scaling of the PIP-SOLVER is more than $1$ less than the exponents of the other methods with pre-factors that are never larger. The quadratic upper bound we have proven in this paper for the PIP-SOLVER holds in all tested cases. The other approaches roughly scale with an exponent of 2.3, as expected.

In addition to having a lower time complexity, the PIP-SOLVER also requires less memory than the other approaches. Indeed, the PIP-SOLVER requires only $\Oc(N(m,n))$ storage, 
whereas all other approaches require 
\linebreak$\Oc(N(m,n)^2)$ storage to hold the Vandermonde matrix. Due to this lower space complexity, we could solve the PIP for large instances, i.e., for $m >80$, where $N(m,3)\geq 10^5$ in less than 2 minutes, see Figure \ref{time2},  while classical approaches failed to solve such large instances  due to insufficient memory on the computer used for the experiment.

\begin{experiment} We measure the runtime of the PIP-SOLVER for different polynomial degrees $n$. Again, we choose uniformly-distributed random function values $f_1,\dots,f_{N(m,n)} \in [-1,1]$, $m,n \in \N$, as interpolation targets.
Then, we measure the time required to generate the multidimensional Newton-Chebyshev nodes $P_{m,n}$ and add the time taken 
to solve the PIP with respect to $f : \R^m \lo R$ with $f(p_i)=f_i$, $p_i \in P_{m,n}$, $i =1,\dots,N(m,n)$ for different $n$.   
\end{experiment}
Figure \ref{degree} shows the average and min-max span of the runtimes (over 10 repetitions with different $i.i.d.$~random function values) versus the dimension 
$m \in \N$ with logarithmic scale on the $y$-axis. We again fit the curves in the admissible intervals with the cost model $pN(m,n)^q$. Again, the goodness of fit as measured by the \emph{R-square} is 1 in all cases.  As expected, the exponent does not change much with degree. 
Just as in 1D, the almost linear scaling for low degrees reflects the computational power of the multivariate divided difference scheme. 
\begin{figure}[t!]
\begin{minipage}[t]{0.5\textwidth}
\vspace{-2.2cm}
  \includegraphics[scale=0.35]{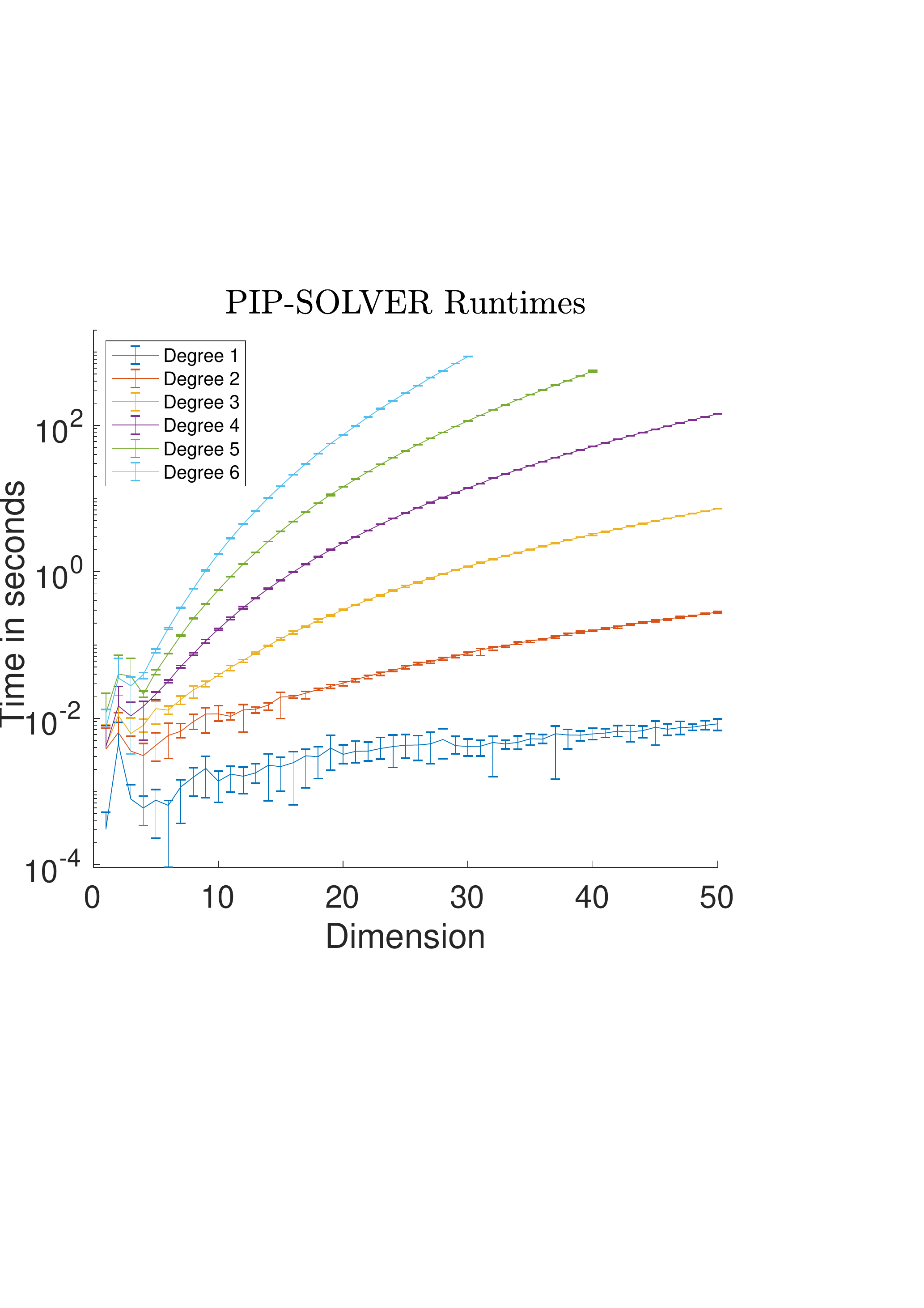}
  \vspace{-3.225cm}
 \end{minipage}
 \hspace{0.1cm}
 \begin{minipage}[t]{0.5\textwidth} 
 \vspace{0.5cm}
 {\footnotesize
\begin{tabular}[t]{ | l | l | l |} 
\hline 
  Degree  &  Pre-factor $p$ & Exponent $q$\\ 
 \hline

  $n=1$&          $p =0.0035219  $ &  $q=1.0450 $ \\
 \hline
  $n=2$&         $p=0.021732 $ & $q=1.1257$ \\ 
 \hline
 $n=3$ & $p=0.0031317$ &   $q=1.2096 $ \\ 
 \hline
$n =4$&     $p=0.0021351 $ &   $q=1.1861 $ \\ 
 \hline
 $n =5$&       $p=0.0017234  $ &   $q=1.1478 $ \\ 
  \hline
 $n =6$&       $p=0.0035746  $ &   $q=1.1336 $ \\ 
 \hline
\end{tabular} 
$ $\\
$ $\\
Fitting the cost model $pN(m,n)^q$
}
 \end{minipage}
  \caption{Runtimes of the PIP-SOVER for degrees $n=1,\dots,6$.}
  \label{degree}
\end{figure}

Next, we test the approximation properties of the PIP-SOLVER.
 A classic test case in approximation theory is \emph{Runge's function} 
  $$f_R(x)=\frac{1}{1+25x^2}\,.$$
  One can easily verify that $\left|\frac{\mathrm{d}^k f_R}{\mathrm{d}x^k}(1/5)\right| \lo \infty$, for $k \rightarrow \infty$.  Thus, 
  $f_R$ is smooth but unbounded with respect to  $\nol \cdot \nor_{C^\infty(\Omega)}$, which means that $f_R \not \in (C^\infty(\Omega),||~\cdot~||_{C^{\infty}(\Omega)})$.  
  This is the reason, why $f_R$ can not be approximated by interpolation with \emph{equidistant nodes} \cite{runge}. 
  However, as long as $k$ remains bounded, we have $\nol f_R \nor_{C^k(\Omega)} \leq C_K$ for some $C_K \in \R^+$. Thus, 
  for all $m \geq 1$, one verifies that the multivariate analog $f_R : \R^m \lo \R$ with $f_R(x)=  \frac{1}{1+25\nol x\nor ^2}$  satisfies  $f_R \in H^k(\Omega,\R)$ for $k >m/2$. By Theorem \ref{NCL}, all Sobolev functions can be approximated 
  when using Newton-Chebyshev nodes. 
  Therefore, we consider the multidimensional $f_R$ when testing the approximation abilities of the PIP-SOLVER. 
  
  \begin{experiment} We consider $m=5$, $\Omega=[-1,1]^m$ and use the PIP-SOLVER to compute the interpolant $Q_{m,n,f_R,CN}$ with respect to multidimensional \linebreak 
  Newton-Chebyshev nodes and the interpolant $Q_{m,n,f_R,EN}$
  with respect to multidimensional Newton nodes $P_{m,n}= \oplus_{i=1}^m\PP_i$ 
  with $\#\PP_i=n+1$ equidistant on $[-1,1]$. For technical resons we consider only even degrees $n \in 2 \N$ allowing to choose $0$ as the center of the Chebyshev nodes. 
  To estimate the $C^0$ distance between the interpolants, we   
   generate 400 uniformly random points $P \subseteq \Omega$ once and measure the relative approximation error  $|Q_{m,n,f_R,CN}(p) - f_R(p)|/f_R(p)$, $|Q_{m,n,f_R,EN}(p)-f_R(p)|/f_R(p)$ for each $p\in P$ and $n =2,4,\dots,24$. 
  \end{experiment}

  Figure \ref{Runge} plots the maximum and the mean of the relative distances \linebreak 
  $|Q_{m,n,f_R,CN}(p) - f_R(p)|/f_R(p)$, $|Q_{m,n,f_R,EN}(p)-f_R(p)|/f_R(p)$ over the 400 randomly chosen but fixed points, 
  with logarithmic scale in the $y$-axis. 
  Though the interpolant $Q_{m,n,f_R,EN}$ with equidistant nodes approximates $f_R$ for low degrees, it diverges with increasing degree $n \geq 6$. In contrast, the interpolant 
  $Q_{m,n,f_R,CN}$ continuous to converge to $f_R$ uniformly on $P$. Thus, we confirm that also in higher dimensions equidistant Newton nodes are infeasible for approximating  
  Runge's function, while Newton-Chebyshev nodes result in uniform convergence. 
  
\begin{figure}[t!]
\begin{minipage}[t]{1.0\textwidth}
\vspace{-3.5cm}
\center
  \includegraphics[scale=0.55]{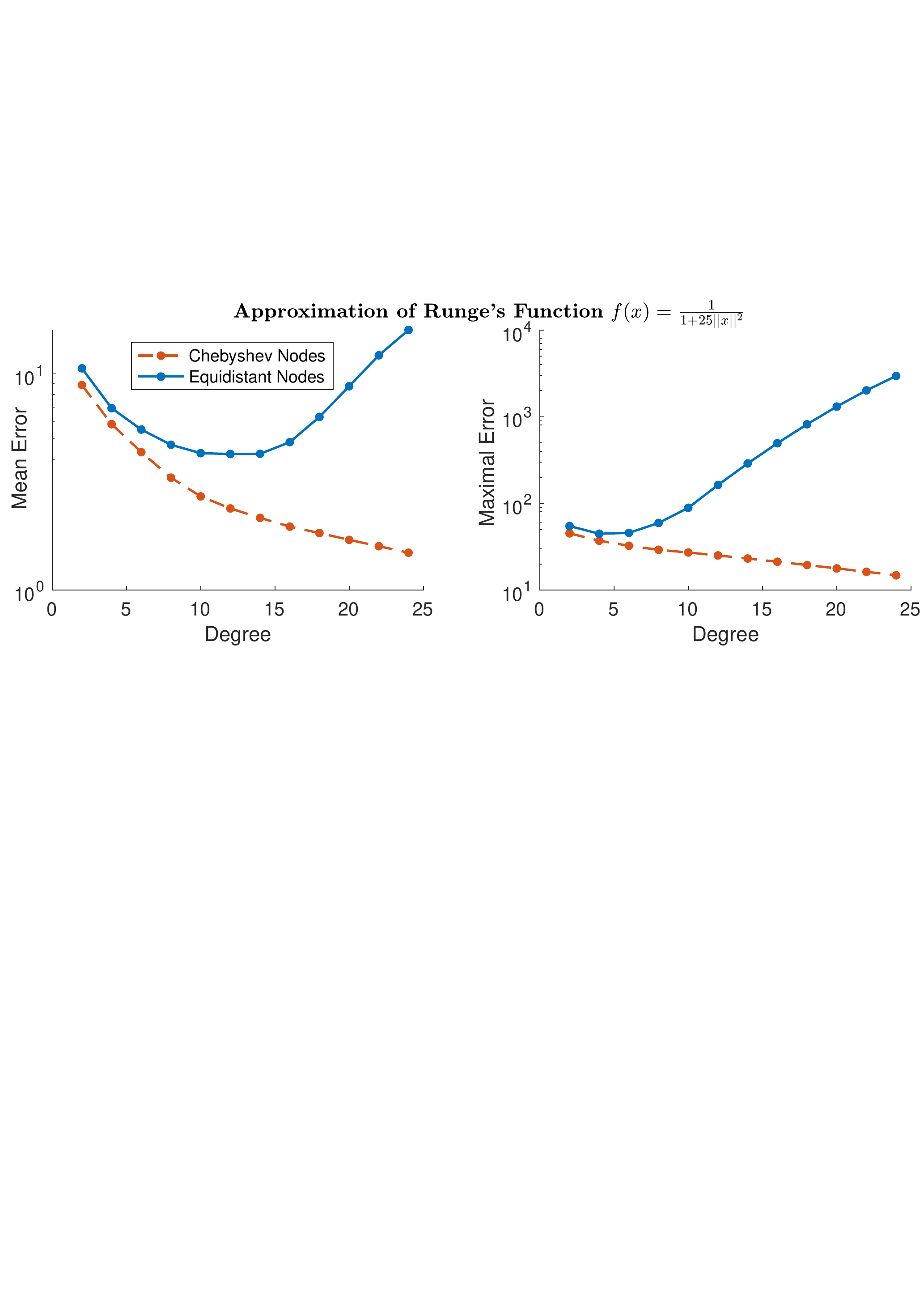}
  \vspace{-8.2cm}
 \end{minipage}
   \caption{Relative approximation error of Runge's function in fixed dimension $m=5$ with degrees $n=2,4,\dots,24$.}
   \label{Runge}
\end{figure}

\section{Potential Applications}\label{APP} We highlight several potential applications of the PIP-SOLVER in scientific computing and computational science. 
This list is by no means exhaustive, as PIPs are a fundamental component of many numerical methods. However, the following applications may not be obvious:  \\

{\bf A1)} \emph{Basic numerics:} Given $m,n\in \N$, $m\geq 1$, and a function $f : \R^m \lo \R$. It is classical in numerical analysis to determine the integral $\int_{\Omega}f \,\mathrm{d}\Omega$, $\Omega \subseteq \R^m$,   
and the partial derivatives  $\partial_{x_i}f(x) $, $1 \leq i \leq m$ of $f$. Upon the PIP, these desired quantities can easily be computed for the interpolation polynomial $Q_{m,n,f}$ of $f$. 
Due to our Main Result III (Theorem \ref{III}), $Q_{m,n,f} \xrightarrow[n\rightarrow\infty]{} f$ uniformly. Thus,  
$\int_{\Omega}Q_{m,n,f} \,\mathrm{d}\Omega \xrightarrow[n\rightarrow\infty]{} \int_{\Omega}f \,\mathrm{d}\Omega$ and by strengthen the conditions on $f$ 
also $\partial_{x_i}Q_{m,n,f}(x) \xrightarrow[n\rightarrow\infty]{} \partial_{x_i}f(x) $ uniformly.
A comparison with other approaches from numerical analysis is worth considering. \\

{\bf A2)} \emph{Gradient descent}  over multivariate functions is often used to (locally) solve (non-convex) optimization problems, where $\Omega \subseteq \mathbb{R}^m$ models the space of possible solutions for a given problem 
and $f: \R^m \lo \R$ is interpreted as an objective function. Thus, one wants to minimize $f$ on $\Omega$.   
Often, the function $f$ is not explicitly known $\forall x \in \Omega$, but can be evaluated point-wise.  Due to the Infeasibility of previous interpolation methods for \mbox{$m \gg 1$}, direct interpolation of $f$ on $\Omega$
was often not possible or not considered. Therefore, classical numerical methods like the \emph{Newton-Raphson iteration}
could not be used if analytical gradients were not available. Instead, \emph{discrete or stochastic gradient descent} methods were usually applied. However, these methods converge slowly and are potentially inaccurate. 
The PIP-SOLVER allows (locally) interpolating $f$ even for \mbox{$m \gg 1$} and enables (locally) applying classic \emph{Newton-Raphson methods}. Consequently, local minima could be found faster and more accurately. Moreover, the analytical representation of $Q$ potentially allows determining the global optimum if $f$ can be uniformly approximated \cite{PolyMin,Min}.\\

{\bf A3)} \emph{ODE \& PDE solvers:} 
A core application of numerical analysis is the approximation of the solution of Ordinary Differential Equations (ODE) or Partial Differential Equations (PDE). This always involves a (temporal and/or spatial) discretization scheme and a solver for the resulting equations. 
There are three classes of methods: collocation schemes, Galerkin schemes, and spectral methods. Spectral methods are based on Fourier transforms. Since FFTs are only efficient on regular Cartesian grids, 
spectral methods are hard to apply in complex geometries and on adaptive-resolution discretizations. The present PIP-SOLVER, however, is not limited to polynomial bases and could enable spectral methods on arbitrary distributions of 
discretization points in arbitrary geometries by interpolating with respect to a Fourier basis~\cite{IEEE}. Collocation methods can generally be understood as PIPs, as becomes obvious in the generalized formulation of 
finite-difference schemes and mesh-free collocation methods~\cite{Schrader}. The inversion of the Vandermonde matrix implied in mesh-free methods, or the choice of mesh nodes in compact finite-difference schemes, 
could benefit from the algorithms presented here. Finally, Galerkin 
schemes are based on expanding the solution of the differential equation in some basis functions, which is essentially what the PIP does for the orthogonal basis of monomials. 
Since the theory presented here is not restricted to this particular choice of basis, it is conceivable that similar algorithms can be formulated for other bases as well, potentially even for non-orthogonal ones. \\

{\bf A4)} \emph{Adaptive sampling} methods aim to explore a domain $\Omega \subseteq \R^m$ such that the essential information of $f : \Omega \lo \R$ is recovered. A classic example from statistics is multidimensional \emph{Bayesian inference} \cite{bayesian}, which relies on adaptive sampling methods. Mostly, these methods are based on 
\emph{Markov-Chain Monte-Carlo} or \emph{sequential Monte-Carlo} sampling. The notion of unisolvent node sets in high dimensions, as we have provided here, potentially helps design sampling proposals that explore $\Omega$ in a more controlled, complete, or more efficient way. \\

{\bf A5)} \emph{Spectral analysis:} The foundation of spectral analysis is to represent a function $ f : \Omega \subseteq \R^m \lo \R,\C$ with respect to some functional basis $f(x)= \sum_{i=1}^\infty c_ib_i(x)$, $c_i \in \R,\C$, $\spann\{b_i\}_{i\in\N}=L^2(\Omega,\R)$, e.g., with respect to Zernike or Chebyshev polynomials, linear (Fourier) harmonics, spherical harmonics, etc. This allows analyzing and understanding the essential 
character $f_0 =  \sum_{i\in I} c_ib_i(x)$ of $f$, where the finite set $I$ is chosen such that the $c_i$ with $i \in I$ cover the most relevant amplitudes. Interpolating $f$ with respect to the specified basis $b_i$ is the classic method of computing the coefficients $c_i$. In \cite{IEEE} 
we have already described how to extend the PIP-SOLVER to Fourier basis. An adaption to other functional bases can be done analogously, which potentially improves numerical spectral analysis.  \\

{\bf A6)} \emph{Cryptography:} A maybe surprising application is found in cryptography. There, the PIP is used to ``share a secret'' by choosing a random polynomial $Q \in \Z[x]$ with integer coefficients in dimension $m=1$. Knowing the values of $Q$ at $n+1$ different nodes (keys) 
enables one to determine \mbox{$Q(0) \!\!\! \mod p$} for some 
large prime number $p \in \N$. However, knowing only $n$ ``keys'' (nodes) prevents one for opening the ``door''.
Certainly, this method can be generalized to arbitrary dimensions using our approach \cite{Shamir}.
Since the PIP-SOLVER performs with machine accuracy, it can also prevent the reconstructed message from being corrupted by numerical noise. \\

There are many other computational schemes that require interpolation or are closely related to the PIP \emph{linear or polynomial regression} in machine learning.
We therefore close with a qualitative discussion of the PIP-SOLVER and state open questions, which potentially yield generalizations and further improvements in the future.

\section{Discussion and Conclusions}\label{Conc}

Even though \emph{Newton interpolation} in one dimension has been known since the 18$^\text{th}$ century, this may be the first generalization of this fundamental algorithm to arbitrary dimensions.
We have provided a complete characterization of the polynomial interpolation problem (PIP) in arbitrary dimensions and polynomial degrees. We have provided an algorithm called PIP-SOLVER
(see Algorithm \ref{alg:PIPSOLVER}) that computes the solution to generalized PIPs in 
$\Oc(N(m,n)^2)$ time and $\Oc(N(m,n))$ space, where $N(m,n)$ is the number of unknown coefficients of the interpolation polynomial with $m$ variables of degree $n$. 
We have shown that the algorithm generates unisolvent node sets for which the Vandermonde matrix is not only well conditioned, but has triangular form, 
enabling efficient and accurate numerical solution using a multivariate divided difference scheme also presented here. We have further provided the corresponding extensions of the Horner scheme, 
enabling evaluating the polynomial in $\Oc(N(m,n))$ and its integral and derivatives in $\Oc(nN(m,n))$ time. 
Lastly, we have studied the approximation properties of multivariate Newton interpolation polynomials and derived the notion of multidimensional Newton-Chebyshev nodes, 
showing that any Sobolev function of sufficient regularity can be uniformly approximated, and we have provided the corresponding upper bounds on the approximation errors. Taken together, 
these contributions solve Problem \ref{PolyInt} and answer Question \ref{Task} for arbitrary dimensions and degrees.

\begin{algorithm}
\caption{PIP-SOLVER}\label{alg:PIPSOLVER}
\begin{algorithmic}[1]
\Procedure{PIP-SOLVER}{$f,m,n$}\Comment{$f$ is computable}
\State Choose admissible plane $H$ according to Theorem \ref{GN} 
\If{$m=0$}
     \State \textbf{return} $f_{|H}$\Comment{$H$ is a point}
\EndIf
\If{$n=0$}
    \State \textbf{return} $f(0)$\Comment{$Q_{m,n,f}$ is a constant}
\EndIf
\State Choose $Q_H \in \Pi_{m,1}$ with $Q_H(H)=0$ 
\State $Q_1 = \text{PIP-SOLVER}(f_{|H},m-1,n)$\Comment{recursion over $m$}
\State $\widehat f = (f - Q_1)/Q_H $ on $\R^m\setminus H$ 
\State $Q_{m,n,f} = Q_1 + Q_H\cdot \text{PIP-SOLVER}(\widehat f,m,n-1)$\Comment{recursion over $n$}
\State \textbf{return} $Q_{m,n,f}$
\EndProcedure
\end{algorithmic}
\end{algorithm}

The problem statement and questions considered here are not new. Nor is the idea of decomposing the PIP w.r.t.~$m,n \in \N$ into sub-problems of dimension and degree \mbox{$(m-1,n)$} and \mbox{$(m,n-1)$}, respectively, which has already been mentioned in \cite{Guenther,2000}. In \cite{Gasca}, the cases $m=2,3$ were treated explicitly, while a generalization to arbitrary dimension was sketched and some characterizations of unisolvent nodes in arbitrary dimensions were given.   
However, the problem of computing the interpolation polynomial $Q_{m,n,f}$ efficiently and accurately remained unsolved. Indeed, all previous decomposition approaches were limited to relatively low 
dimensions and to nodes on pre-defined grids or meshes \cite{Bos,Erb,FAST,Gasca2000,Chung}, not providing a general algorithm for solving the PIP for arbitrary $m,n\in \N$.

Here, we were able to provide such a general algorithm and prove bounds on its time and space complexity. The achievement that made this possible was the introduction of unisolvent multidimensional Newton nodes $P_{m,n}$, which, combined with the multivariate Newton basis of $\Pi_{m,n}$, yielded a form of  the Vandermonde matrix $V_{m,n}(P_{m,n})$ that allows solving the system of linear equations $V_{m,n}(P_{m,n})C=F$  in less time than what is required for general matrix inversion. 

We demonstrated and validated our results in a practical software implementation of the PIP-SOLVER, showing scaling to high-dimensional spaces as common in applications ranging from machine learning to computational statistics. Owing to the elegance of the theory, the implementation of the PIP-SOLVER is straightforward and results in a simple code. 

Our simple reference implementation is not tuned for efficiency at the time of writing. In the future, we foresee distributed- and shared-memory parallel implementations in compiled programming languages to further reduce runtimes and enable even larger problems to be solved. This is possible since the recursive decomposition yields sub-problems that can be processed in parallel, using inter-process communication to ensure correct decomposition of the problem and synthesis of the final solution.

A possible extension of the presented theory is to also cover interpolation in other bases, such as the Fourier basis \cite{IEEE}, spherical harmonics, or Zernike polynomials. 
We also believe that our approach can be extended to multivariate barycentric or Lagrange Interpolation. In 1D, it is well known that Newton and Lagrange polynomials are related through the \emph{barycentric weights} 
\cite{berrut,werner}.
Precomputing these weights, barycentric Lagrange interpolation only requires linear time $\Oc(N(1,n))$ to compute the interpolant of degree $n\in \N$ in 1D. 
The present approach to multivariate Newton polynomials could lead to multivariate barycentric Lagrange interpolation schemes running in $\Oc(mN(m,n))$ for arbitrary $m,n \in \N$. 
Similarly, multivariate Hermite Interpolation could also be considered, since efficient realizations of this concept are closely related to Newton and Lagrange interpolation. In 1D, Hermite interpolation is a classic concept \cite{gautschi} that requires one to know the function $f : [-1,1]\lo \R$ and its derivatives on less than $n+1$ nodes in order to compute the interpolant $Q_{f,n}\in \Pi_{1,n}$. 

A case of special interest is spline interpolation \cite{unserSplines}. Fast implementations of spline interpolation are available \cite{fspline,unserFast}, making them a 
powerful and popular tool. Spline interpolation is based on decomposing the domain $\Omega =[-1,1]^m$ into smaller, shifted hypercubes $\Omega_i=[-\ee,\ee]^m + p_i$, $p_i \in \Omega$, $ i \in I$, $\ee >0$, 
and ``gluing'' the interpolants $Q_i : \Omega_i \lo \R$ to a $k$-times differentiable global function $Q$, $k \in \N$. Therefore, $\#I$ increases exponentially with dimension $m$. Tensorial formulations \cite{tensor,tensor2} are available for efficient local spline interpolation. However, the exponential scaling of the number of hypercubes, $\#I$ in which this has to be done cannot be overcome. This is why spline interpolation is mostly used for lower-dimensional problems. Further, the mathematical character of $f$ is not recovered in the spline basis and the approximation quality depends on the spline degree and on the choice of node conditions \cite{schoen,unserWiener,Unser:2005}. Therefore, spline interpolation is well suited  
to signal and image processing in low dimensions. The $L^2$-bases we proposed in $A5)$ above provide a potentially interesting choice in high dimensions. 
In principle, Hermite interpolation could also be used to glue spatially decomposed interpolants to a global $k$-times differentiable function. The notion of a globally unisolvent node set $P_{m,n}$ could then provide a way of spatially decomposing $\Omega$ such that the resulting global Hermite interpolant is of high approximation quality and can be computed efficient. 
We expect that this hybrid Hermite-spline interpolation method would relax some of the issues with splines in high dimensions.

The main practical limitation of our approach is that it requires the function $f : \Omega \lo \R$ to be computable in constant time, which means that the algorithm is free to choose the interpolation nodes. In many problems, however, $f$ is only known on a previously fixed node set $\PP \subseteq \R^m$ (i.e., the data given). In the case where $\PP$ is a (regular) grid, we can choose multivariate Newton nodes $P_{m,n}\subseteq \PP$ and our approach works. However, if $\PP$ is arbitrarily scattered, our approach does not directly apply. While resampling/reorganizing the data points can sometimes be an option, a general solution is outstanding. 
In particular, the optimal ordering of the nodes yielding the best numerical approximation in the sense of minimal rounding errors remains to be investigated. While this does not matter in infinite-precision arithmetic, finite-precision floating-point arithmetic accuracy, as well as algorithm speed, can be improved by appropriately ordering the points \cite{tal1988high}.

The main theoretical limitation of our approach is that we bounded the Lebesgue functions with respect to the $H^k$-norm for $k>m/2$. Additionally, we assumed that the considered functions $f \in C^0(\Omega,\R)$, $\Omega=[-1,1]^m$, $m \in \N$ are periodic.
While these assumptions match the requirements of many practical applications, the classic Lebesgue function estimates just require $f$ to be continuous. Hence, a deeper study of Lebesgue functions with respect to the powerful Sobolev analysis of periodic functions might improve the bounds presented here and might provide a way of controlling the convergence rate of  $Q_{m,n,f}\lo f$. 

Notwithstanding these open questions, we suspect that our concepts could provide a general perspective for considering multivariate interpolation problems, since, for dimension $m=1$, our concepts include the classic Newton interpolation scheme. We thus hope that the concepts and algorithms presented here will be useful to the community across application domains.

% \begin{acknowledgements}
% If you'd like to thank anyone, place your comments here
% and remove the percent signs.
% \end{acknowledgements}

% BibTeX users please use one of
% \bibliographystyle{spbasic}      % basic style, author-year citations
\bibliographystyle{spmpsci}      % mathematics and physical sciences
 \bibliography{Ref.bib}   % name your BibTeX data base

% \bibliographystyle{spbasic}
% \bibliography{Ref.bib}

% Non-BibTeX users please use
% \begin{thebibliography}{}
% 
% % and use \bibitem to create references. Consult the Instructions
% % for authors for reference list style.
% 
% % \bibitem{RefJ}
% % Format for Journal Reference
% % Author, Article title, Journal, Volume, page numbers (year)
% % Format for books
% % \bibitem{RefB}
% % Author, Book title, page numbers. Publisher, place (year)
% % etc
% 
% \end{thebibliography}

\end{document}